\documentclass[11pt,final]{article}
\usepackage{a4}
\usepackage{amsmath}%
\usepackage{amstext}%
\usepackage{amssymb}%
\usepackage{comment}
\usepackage{listings}

\usepackage[final]{graphicx}
\usepackage{subcaption}

\usepackage[margin=0.5in]{geometry}

\setcounter{MaxMatrixCols}{10}

\newtheorem{theorem}{Theorem}

\newtheorem{axiom}{Axiom}

\newtheorem{corollary}[theorem]{Corollary}

\newtheorem{definition}[axiom]{Definition}

\newtheorem{lemma}[theorem]{Lemma}

\newtheorem{proposition}[theorem]{Proposition}

\newenvironment{remark}{\rem\rm}{\endrem}

\newcounter{unnumber}

\newenvironment{proof}{\prf\rm}{\hfill{$\blacksquare$}\endprf}
\newcommand{\R}{\mathbb{R}}%
\newcommand{\N}{\mathbb{N}}%
\newcommand{\e}{\varepsilon}%
\newcommand{\ol}{\overline}%

\newcommand{\n}{{\nabla}}

\newcommand{\ds}{\displaystyle}
\newcommand{\To}{\longrightarrow}
\def\a{\alpha}

\def\b{\beta}
\def\e{\epsilon}
\def\d{\delta}
\def\t{\theta}
\def\g{\gamma}

\def\<{\langle}
\def\>{\rangle}
\DeclareMathOperator*\prox{prox}%
\DeclareMathOperator*\argmin{argmin}

\DeclareMathOperator*\crit{crit}
\DeclareMathOperator*\dist{dist}

\textwidth17.5cm \textheight23.5cm
\oddsidemargin-0.4cm
\topmargin-1cm

\author{ Szil\'{a}rd Csaba L\'{a}szl\'{o} \thanks{Technical University of Cluj-Napoca, Department of Mathematics,
 Str. Memorandumului nr. 28, 400114 Cluj-Napoca, Romania, e-mail: laszlosziszi@yahoo.com This work was supported by a grant of Ministry of Research and Innovation, CNCS - UEFISCDI, project number PN-III-P1-1.1-TE-2016-0266, and  by a grant of Ministry of Research and Innovation,
 CNCS - UEFISCDI, project number PN-III-P4-ID-PCE-2016-0190, within PNCDI III.}}
\title{Convergence rates for an inertial algorithm of gradient type associated to a smooth non-convex minimization}

\begin{document}
\maketitle

\noindent \textbf{Abstract.}  We investigate an inertial algorithm of gradient type  in connection with the minimization of a non-convex differentiable function. The algorithm is formulated in the spirit of Nesterov's accelerated convex gradient method. We prove some abstract convergence results which applied to our numerical scheme allow us to show that the generated sequences converge to a critical point of the objective function, provided a regularization of the objective function satisfies the Kurdyka-{\L}ojasiewicz property. Further, we obtain convergence rates for the generated sequences and the objective function values formulated in terms of the {\L}ojasiewicz exponent of a regularization of the objective function. Finally, some numerical experiments are presented in order to compare our numerical scheme and some algorithms well known  in the literature. \vspace{1ex}

\noindent \textbf{Key Words.} inertial algorithm, non-convex optimization,  Kurdyka-\L{}ojasiewicz inequality, convergence rate, {\L}ojasiewicz exponent \vspace{1ex}

\noindent \textbf{AMS subject classification.}  90C26, 90C30, 65K10

\section{Introduction}
Inertial optimization algorithms deserve special attention in both convex and non-convex optimization due to their better convergence rates compared to non-inertial ones, as well as due to their ability to detect multiple critical points of non-convex functions via an appropriate control of the inertial parameter \cite{alvarez-attouch2001,AGR,bete09,BCH,ch-do2015,CG,GFJ,LRP,PL,nesterov83,poly,SYLHGJ}.  Non-inertial methods lack the latter property \cite{BCL}.

With the growing use of non-convex objective functions in some applied fields, such as image processing or machine learning, the need for non-convex numerical methods increased significantly. However, the literature of non-convex optimization methods is still very poor, we refer to \cite{OCBP} (see also \cite{Ochs}), \cite{BCL1} and \cite{ZK}  for some algorithms that can be seen as extensions of Polyak's heavy ball method \cite{poly} to the non-convex case and the papers \cite{att-b-red-soub2010} and \cite{att-b-sv2013}  for some abstract non-convex methods.

In this paper we investigate  an algorithm, with a possible non-convex objective function, which has a  form similar to Nesterov's accelerated convex gradient method  \cite{nesterov83,ch-do2015}.   

Let $g:\R^m\To \R$ be a (not necessarily convex) Fr\'{e}chet differentiable function with $L_g$-Lipschitz continuous gradient, that is, there exists $L_g\ge 0$ such that $\|\n g(x)-\n g(y)\|\le L_g\|x-y\|$ for all $x,y\in \R^m.$ We deal with the optimization problem
\begin{equation}\label{opt-pb} (P) \ \inf_{x\in\R^m}g(x). \end{equation}

Of course regarding this possible non-convex optimization problem, in contrast to the convex case where every local minimum is also a global one, we are interested to approximate the critical points of the objective function $g$. To this end we associate to the optimization problem \eqref{opt-pb} the following inertial algorithm of gradient type.
Consider the starting points $x_0,x_{-1}\in\R^m$ and for all $n\in\N$ let
\begin{equation}\label{generaldiscrete}\left\{\begin{array}{lll}
\ds y_n=x_n+\frac{\b n}{n+\a}(x_n-x_{n-1})\\
\\
\ds x_{n+1}=y_n-s\n g(y_n),
\end{array}\right.
\end{equation}
where $\a>0,\,\b\in(0,1)$ and $0<s<\frac{2(1-\b)}{L_g}.$

We underline that the main difference between Algorithm \eqref{generaldiscrete} and the already mentioned non-convex versions of the heavy ball method studied in \cite{OCBP} and \cite{BCL1} is the same as the difference between the methods of Polyak  \cite{poly} and Nesterov \cite{nesterov83}, that is, meanwhile the first one evaluates the gradient in $x_n$ the second one evaluates the gradient in $y_n.$  One can  observe at once the similarity between the formulation of Algorithm \eqref{generaldiscrete}  and the algorithm considered by Chambolle and Dossal in \cite{ch-do2015} (see also \cite{AAD,AD,att-c-p-r-math-pr2018}) in order to prove the convergence of the iterates of the modified FISTA algorithm \cite{bete09}. Indeed, the algorithm studied by Chambolle and Dossal in the context of a convex optimization problem can be obtained from Algorithm \eqref{generaldiscrete} by violating its assumptions and allowing  the case $\b=1$ and $s\le \frac{1}{L_g}.$

Unfortunately, due to the form of the stepsize $s$,  we cannot allow the case  $\b=1$ in Algorithm \eqref{generaldiscrete}, but what is lost at the inertial parameter it is gained at the stepsize, since  in the case $\b<\frac12$ one may allow a better stepsize than $\frac{1}{L_g}$, more precisely the stepsize in Algorithm \eqref{generaldiscrete} satisfies $s\in\left(\frac{1}{L_g},\frac{2}{L_g}\right)$.

Let us mention that to our knowledge Algorithm \eqref{generaldiscrete} is  the first attempt  in the literature to extend the Nesterov accelerated convex gradient method to the case when the objective function $g$ is possible non-convex.

Another interesting fact about Algorithm \eqref{generaldiscrete} which enlightens the relation with Nesterov's accelerated convex gradient  method  is that both methods are modeled by the same differential equation that governs the so called continuous heavy ball system with vanishing damping, that is,
\begin{equation}\label{ee11}
\ddot{x}(t)+\frac{\a}{t}\dot{x}(t)+\n g(x(t))=0.
\end{equation}

We recall that \eqref{ee11} (with $\a=3$) has been introduced by  Su, Boyd and Cand\`es  in \cite{su-boyd-candes} as the continuous counterpart of Nesterov's accelerated gradient  method and from then it was  the  subject  of  an intensive  research. Attouch and his co-authors \cite{att-c-p-r-math-pr2018,att-p-r-jde2016} proved  that  if $\a>3$ in \eqref{ee11}  then  a  generated  trajectory $x(t)$  converges  to  a  minimizer  of the convex objective function $g$ as $t\To+\infty$, meanwhile the convergence rate of the objective function along the trajectory is $o(1/t^2)$.  Further, in \cite{att-c-r-arx2017} some results concerning the convergence rate of the convex objective function  $g$ along  the  trajectory generated by \eqref{ee11} in  the  subcritical  case $\a\le 3$ have been obtained.

 In one hand, in order to obtain optimal convergence rates of the trajectories generated by \eqref{ee11},  Aujol, Dossal and Rondepierre \cite{ADR} assumed that beside convexity the objective function $g$ satisfies also some geometrical conditions, such as the {\L}ojasiewicz property. 

On the other hand, Aujol and Dossal  obtained in \cite{AD} some general convergence rates and also the convergence of the trajectories generated by \eqref{ee11} to a minimizer of the objective function $g$ by dropping the convexity assumption on $g$ but assuming that the function $(g(x(t))-g(x^*))^\b$ is convex, where $\b$ is strongly related to the damping parameter $\a$ and $x^*$ is a global minimizer of $g.$ In case $\b=1$ they results reduce to the results obtained in \cite{att-c-p-r-math-pr2018,att-p-r-jde2016,att-c-r-arx2017}.

  However, the convergence of the  trajectories generated by  the continuous heavy ball system with vanishing damping  in the general case when the objective function $g$ is possible non-convex is still an open question.
 Some important steps in this direction have been made in \cite{BCL-AA} (see also \cite{BCL}), where convergence of the trajectories of a system, that can be viewed as a perturbation of \eqref{ee11}, have been obtained in a non-convex setting. More precisely, in \cite{BCL-AA}  is considered the continuous dynamical system
  \begin{equation}\label{eee11}
\ddot{x}(t)+\left(\g+\frac{\a}{t}\right)\dot{x}(t)+\n g(x(t))=0,\, x(t_0)=u_0,\,\dot{x}(t_0)=v_0,
\end{equation}
where $t_0>0,\,u_0,\,v_0\in\R^m,\,\g>0$ and $\a\in\R.$  Note that here $\a$ can take nonpositive values. For $\a=0$ we recover the dynamical system studied in \cite{BBJ}. According to \cite{BCL-AA} the trajectory  generated by the dynamical system \eqref{eee11} converges  to a critical point of $g$ if a regularization of $g$ satisfies the Kurdyka-{\L}ojasiewicz property.

The connection between the continuous dynamical system \eqref{eee11} and Algorithm \eqref{generaldiscrete} is that the latter one can be obtained via discretization from  \eqref{eee11}, as it is shown in Appendix. Further, following the same approach as Su, Boyd and Cand\`es in \cite{su-boyd-candes} (see also \cite{BCL-AA}), we show in Appendix that  by choosing  appropriate values of $\b$ the numerical scheme \eqref{generaldiscrete}  has the exact limit  the continuous second order dynamical system governed by  \eqref{ee11}  and also the continuous dynamical system \eqref{eee11}.
Consequently, our numerical scheme \eqref{generaldiscrete} can be seen as the discrete counterpart of the continuous dynamical systems \eqref{ee11} and \eqref{eee11} in a full non-convex setting.

The paper is organized as follows. In the next section we prove an abstract convergence result that may become useful in the future in the context of related researches. Our result is formulated in the spirit of the abstract convergence result from \cite{att-b-sv2013}, however it can also be used in the case when we evaluate de gradient of the objective function in iterations that contain inertial terms. Further, we apply the abstract convergence result obtained to \eqref{generaldiscrete} by showing that its assumptions are satisfied by the sequences generated by the numerical scheme \eqref{generaldiscrete}, see also \cite{att-b-sv2013,b-sab-teb,BCL1}.
In  section 3 we obtain several convergence rates both for the sequences $(x_n)_{n\in\N}$ and $(y_n)_{n\in\N}$ generated by the numerical scheme \eqref{generaldiscrete}, as well as for the function values $g(x_n)$ and $g(y_n)$ in the terms of the {\L}ojasiewicz exponent of the objective function $g$ and a regularization of $g$, respectively (for some general results see  \cite{FGP,GRV}). As an immediate consequence we obtain linear convergence rates in the case when the objective function is strongly convex.  Further, in section 4 via some numerical experiments we show that Algorithm \eqref{generaldiscrete} has a very good behavior compared with some well known algorithms from the literature.
Finally, in Appendix we show that Algorithm \eqref{generaldiscrete} and the second order differential equations  \eqref{ee11} and \eqref{eee11} are strongly connected.

\section{Convergence analysis}

The central question that we are concerned in this section regards the convergence of the sequences generated by the numerical method \eqref{generaldiscrete} to a critical point of the objective function $g,$ which in the non-convex case critically depends on the Kurdyka-{\L}ojasiewicz property \cite{lojasiewicz1963,kurdyka1998} of an appropriate regularization of the objective function. The Kurdyka-{\L}ojasiewicz property is a key tool in non-convex optimization (see  \cite{attouch-bolte2009,att-b-red-soub2010,att-b-sv2013,b-sab-teb,BNPS,BC,BC1,BC2,BCL,BCL1,BCL-AA,CPR,FGP,HJ,OCBP,simon}), and might look restrictive, but from a practical point of view in problems appearing in image processing, computer vision or machine learning this property is always satisfied.

 We prove at first an abstract convergence result which applied to Algorithm \eqref{generaldiscrete} ensures the  convergence of the  generated sequences. 
The main result of this section is the following.
\begin{theorem}\label{convergence} In the settings of problem \eqref{opt-pb}, for some starting points $x_0,x_{-1}\in\R^m,$ consider the sequence $(x_n)_{n\in\N}$ generated by Algorithm \eqref{generaldiscrete}. Assume that $g$ is bounded from below and consider the function
$$H:\R^m\times\R^m\To\R,\,H(x,y)=g(x)+\frac12\|y-x\|^2.$$ Let $x^*$ be a cluster point of the sequence $(x_n)_{n\in\N}$ and assume that $H$ has the Kurdyka-{\L}ojasiewicz property at a $z^*=(x^*,x^*).$

 Then, the sequence $(x_n)_{n\in\N}$ converges to $x^*$ and $x^*$ is a critical point of the objective function $g.$
\end{theorem}


\subsection{An abstract convergence result}

In what follows, by using some similar techniques as in \cite{att-b-sv2013}, we  prove an abstract convergence result. For other works where these techniques were used we refer to \cite{FGP,OCBP}. 

Let us denote by $\omega((x_n)_{n\in\N})$ the set of cluster points of the sequence $(x_n)_{n\in\N}\subseteq\R^m,$ that is,
$$\omega((x_n)_{n\in\N}):=\left\{x^*\in\R^m:\mbox{ there exists a subsequence }(x_{n_j})_{j\in\N}\subseteq(x_n)_{n\in\N}\mbox{ such that }\lim_{j\To+\infty}x_{n_j}=x^*\right\}.$$

Further, we denote by  $\crit(g)$ the set of critical points of a smooth function $g:\R^m\To\R$, that is,
$$\crit(g):=\{x\in\R^m: \nabla g(x)=0\}.$$

In order to continue our analysis we need the concept of a KL function. For $\eta\in(0,+\infty]$, we denote by $\Theta_{\eta}$
the class of concave and continuous functions $\varphi:[0,\eta)\To [0,+\infty)$ such that $\varphi(0)=0$, $\varphi$ is
continuously differentiable on $(0,\eta)$, continuous at $0$ and $\varphi'(s)>0$ for all
$s\in(0, \eta)$.

\begin{definition}\label{KL-property} \rm({\it Kurdyka-{\L}ojasiewicz property}) Let $g:\R^m\To \R$ be a differentiable function. We say that $g$ satisfies the {\it Kurdyka-{\L}ojasiewicz (KL) property} at
$\ol x\in \R^m$
if there exist $\eta \in(0,+\infty]$, a neighborhood $U$ of $\ol x$ and a function $\varphi\in \Theta_{\eta}$ such that for all $x$ in the
intersection
$$U\cap \{x\in\R^m: g(\ol x)<g(x)<g(\ol x)+\eta\}$$ the following, so called KL inequality, holds
\begin{equation}\label{KLineq}
\varphi'(g(x)-g(\ol x))\|\n g(x)\|\geq 1.
\end{equation}
If $g$ satisfies the KL property at each point in $\R^m$, then $g$ is called a {\it KL function}.
\end{definition}

Of course, if $g(\ol x)=0$ then the previous inequality can be written as
$$\|\n(\varphi\circ g)(x)\|\ge 1.$$

The origins of this notion go back to the pioneering work of {\L}ojasiewicz \cite{lojasiewicz1963}, where it is proved that for a
real-analytic function $g:\R^m\To\R$ and a critical point $\ol x\in\R^m$  there exists $\theta\in[1/2,1)$ such that the function
$x\rightarrowtail |g(x)-g(\ol x)|^{\theta}\|\nabla g(x)\|^{-1}$ is bounded around $\ol x$. This corresponds to the situation when
$\varphi(s)=C(1-\theta)^{-1}s^{1-\theta}$. The result of {\L}ojasiewicz allows the interpretation of the KL property as a re-parametrization of the function values in order to avoid flatness around the critical points, therefore $\varphi$ is called a desingularizing function \cite{BBJ}. Kurdyka \cite{kurdyka1998} extended this property to differentiable functions definable in an o-minimal structure.
Further extensions to the nonsmooth setting can be found in \cite{ att-b-red-soub2010, b-d-l2006, b-d-l-s2007, b-d-l-m2010,DM}.

To the class of KL functions belong semi-algebraic, real sub-analytic, semi-convex, uniformly convex and
convex functions satisfying a growth condition. We refer the reader to
\cite{ attouch-bolte2009,att-b-red-soub2010,att-b-sv2013,b-sab-teb, b-d-l2006,b-d-l-s2007,b-d-l-m2010} and the references therein  for more details regarding all the classes mentioned above and illustrating examples.

In what follows we formulate some conditions that beside the KL property at a point of a continuously differentiable function lead to a convergence result.
Consider a sequence $(x_n)_{n\in\N}\subseteq\R^m$ and fix the positive constants $a,b> 0,\,c_1,c_2\ge0,\,c_1^2+c_2^2\neq 0.$ Let $F : \R^m\times\R^m\To\R$ be a continuously Fr\'echet differentiable function. Consider further  a sequence $(z_n)_{n\in\N} := (v_n,w_n)_{n\in\N}\subseteq\R^m\times\R^m$ which is related to the sequence $(x_n)_{n\in\N}$    via the conditions (H1)-(H3) below.
\vskip0.3cm
(H1) For each $n\in\N$ it holds
$$ a\|x_{n+1}-x_{n}\|^2\le F(z_{n})-F(z_{n+1}).$$

(H2) For each $n \in\N,\,n\ge 1$ one has
$$\|\n F(z_n)\|\le b(\|x_{n+1}-x_{n}\|+\|x_n-x_{n-1}\|).$$

(H3)   For each $n \in\N\,,\,n\ge 1$ and every $z=(x,x)\in\R^m\times\R^m$ one has
 $$\|z_n-z\|\le c_1\|x_n-x\|+c_2\|x_{n-1}-x\|.$$

\begin{remark} One can observe that the conditions (H1) and (H2) are very similar to those in \cite{att-b-sv2013}, \cite{FGP} and \cite{OCBP}, however there are some major differences. First of all observe that the conditions in \cite{att-b-sv2013} or \cite{FGP} can be rewritten into our setting by considering that the sequence $(z_n)_{n\in\N}$ has the form $z_n=(x_n,x_{n})$ for all $n\in\N$ and the lower semicontinuous function $f$ considered in \cite{att-b-sv2013} satisfies $f(x_n)=F(z_n)$ for all $n\in\N.$

Further, in \cite{OCBP} the sequence $(z_n)_{n\in\N}$ has the special form  $z_n=(x_n,x_{n-1})$  for all $n\in\N.$

\begin{itemize}
\item Our condition (H3) is automatically satisfied for the sequence considered in \cite{att-b-sv2013} that is $z_n=(x_n,x_n)$ with $c_1=\sqrt{2},\,c_2=0$ and also for the sequence considered in \cite{OCBP} $z_n=(x_n,x_{n-1})$ with $c_1=c_2=1.$
\item In \cite{att-b-sv2013} and \cite{FGP} the condition (H1) reads as
$$ a_n\|x_{n+1}-x_{n}\|^2\le F(z_n)-F(z_{n+1}),$$
 where $a_n=a>0$ in \cite{att-b-sv2013} and $a_n>0$ in \cite{FGP}, which are formally  identical to our assumption but our sequence $z_n$ has a more general form, meanwhile in \cite{OCBP} (H1) is
$$ a\|x_n-x_{n-1}\|^2\le F(z_n)-F(z_{n+1}).$$
\item The corresponding relative error (H2) in \cite{att-b-sv2013} is
$$\|\n F(z_{n+1})\|\le b\|x_{n+1}-x_{n}\|$$
consequently, in some sense, our condition may have a larger relative error.
 In \cite{FGP} the condition (H2) has the form
 $$\|\n F(z_{n+1})\|\le b_n\|x_{n+1}-x_{n}\|+c_n,\mbox{ where }b_n>0,\,c_n\ge 0.$$
 Moreover, in \cite{OCBP} is considered $(z_n)_{n\in\N}=(x_n,x_{n-1})_{n\in\N}$, hence their condition (H2) has the form
$$\|\n F(x_{n+1},x_n)\|\le b(\|x_{n+1}-x_{n}\|+\|x_n-x_{n-1}\|).$$
\item Further, since  in \cite{att-b-sv2013} and \cite{OCBP} $F$ is assumed to be lower semicontinuous  only, their condition (H3) has the form:
 there exists a subsequence $(z_{n_j} )_{j\in\N}$ of $(z_n)_{n\in\N}$
 such that
$z_{n_j}\To z^*$ and $F(z_{n_j})\To F(z^*),$ as $j\To+\infty.$
Of course in our case  this condition holds whenever $\omega((z_n)_{n\in\N})$ is nonempty since $F$ is continuous. In \cite{FGP} condition (H3) refers to some properties of the sequences $(a_{n\in\N}),(b_{n\in\N})$ and $(c_n)_{n\in\N}.$
\end{itemize}

Consequently, at least in the smooth setting, our abstract convergence result stated in Lemma \ref{abstrconv} below is an extension of the corresponding result in \cite{att-b-sv2013}, \cite{FGP} and \cite{OCBP}.
\end{remark}

\begin{lemma}\label{abstrconv}
Let $F : \R^m\times\R^m\To\R$ be a continuously Fr\'echet differentiable  function which satisfies the Kurdyka-{\L}ojasiewicz property at some point $z^*=(x^*,x^*)\in \R^m\times\R^m.$

Let us denote by $U$, $\eta$ and $\varphi: [0, \eta)\To\R_+$ the objects appearing in the definition of the
KL property at $z^*.$ Let $\sigma>\rho > 0$ be such that $B(z^*,\sigma)\subseteq U.$ Furthermore, consider the sequences $(x_n)_{n\in\N},(v_n)_{n\in\N},(w_n)_{n\in\N}$ and let $(z_n)_{n\in\N}=(v_n,w_n)_{n\in\N}\subseteq\R^m\times\R^m$  be a sequence that satisfies the conditions (H1), (H2), and (H3).

Assume  further that
\begin{equation}\label{e00}
\forall n\in\N : z_n \in B(z^*,\rho)\implies z_{n+1}\in B(z^*,\sigma)\mbox{ with }F(z_{n+1})\ge F(z^*).
\end{equation}
Moreover, the initial point $z_0$ is such that $z_0\in B(z^*,\rho),$  $F(z^*)\le F(z_0)<F(z^*)+\eta$ and
\begin{equation}\label{e01}
\|x^*-x_0\|+2\sqrt{\frac{F(z_0)-F(z^*)}{a}}+\frac{9b}{4a}\varphi(F(z_0)-F(z^*))<\frac{\rho}{c_1+c_2}.
\end{equation}

Then, the following statements hold.

One has that
$ z_n\in B(z^*,\rho)$ for all $n \in\ N.$
Further, $\sum_{n=1}^{+\infty}\|x_n-x_{n-1}\|<+\infty$ and the sequence $(x_n)_{n\in\N}$ converges to a point $\ol x\in\R^m.$ The sequence $(z_n)_{n\in\N}$ converges to  $\ol z= (\ol x,\ol x),$ moreover, we have $\ol z\in B(z^*,\sigma)\cap \crit (F)$ and $F(z_n)\To F(\ol z)=F(z^*),\, n \To+\infty.$
 \end{lemma}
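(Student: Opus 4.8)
The plan is to carry out the localized Kurdyka--{\L}ojasiewicz descent argument in the style of \cite{att-b-sv2013}, but organized around the scalar displacements $d_n := \|x_n - x_{n-1}\|$ so that the hypotheses (H1)--(H3) enter at the appropriate places. Write $F^* := F(z^*)$ and $r_n := F(z_n) - F^*$, and abbreviate $\phi_n := \varphi(r_n)$ whenever $r_n \in [0,\eta)$. A preliminary reduction removes the degenerate case: if $r_n = 0$ for some $n$ with $z_n \in B(z^*,\rho)$, then \eqref{e00} gives $F(z_{n+1}) \ge F^*$ while (H1) gives $F(z_{n+1}) \le F(z_n) = F^*$, forcing $d_n = 0$ and $r_{n+1} = 0$; the iteration is then stationary from that index on and the conclusions are trivial. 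Hence I assume $r_n > 0$ for every relevant $n$; note that (H1) makes $(F(z_n))_n$ nonincreasing, so $r_n \le r_0 < \eta$ and $\phi_n$ is well defined.

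The core of the proof is a single recursive estimate. Suppose $z_n \in B(z^*,\rho) \subseteq U$ and $0 < r_n < \eta$, so that the KL inequality applies at $z_n$: $\varphi'(r_n)\,\|\nabla F(z_n)\| \ge 1$. Combining the concavity bound $\phi_n - \phi_{n+1} \ge \varphi'(r_n)(r_n - r_{n+1})$ with (H1), which reads $a\,d_n^2 \le r_n - r_{n+1}$, and with (H2), which reads $\|\nabla F(z_n)\| \le b(d_{n+1} + d_n)$, yields
\[
d_n^2 \;\le\; \frac{b}{a}\,(\phi_n - \phi_{n+1})\,(d_n + d_{n+1}).
\]
Applying the weighted Young inequality $\sqrt{XY} \le \tfrac{3}{2}X + \tfrac{1}{6}Y$ (the calibration that produces the constant in \eqref{e01}) with $X = \tfrac{b}{a}(\phi_n - \phi_{n+1})$ and $Y = d_n + d_{n+1}$, and absorbing the resulting $\tfrac16 d_n$ into the left-hand side, I obtain the clean descent relation
\[
5\,d_n \;\le\; d_{n+1} + \frac{9b}{a}\,(\phi_n - \phi_{n+1}).
\]
Summing a telescoping chain of these inequalities gives $4\sum_{k} d_k \le \tfrac{9b}{a}\phi_0 + (\text{one residual displacement})$, which is exactly where the factor $\tfrac{9b}{4a}$ in \eqref{e01} originates.

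I then prove by induction the containment statement ``$z_n \in B(z^*,\rho)$ for all $n$''. The base case is the hypothesis $z_0 \in B(z^*,\rho)$. For the inductive step, once $z_0,\dots,z_n \in B(z^*,\rho)$, the implication \eqref{e00} supplies both $z_{n+1}\in B(z^*,\sigma)\subseteq U$ and $F(z_1),\dots,F(z_{n+1}) \ge F^*$, which is precisely what legitimizes applying the KL inequality, and hence the recursion above, along the already-constructed part of the orbit. Summing that recursion and inserting the resulting bound on $\sum_k d_k$ into the telescoped triangle inequality $\|x_n - x^*\| \le \|x^*-x_0\| + \sum_{k=1}^{n}d_k$, while controlling the boundary displacement by $d_n \le \sqrt{r_n/a} \le \sqrt{r_0/a}$ (from (H1)), I estimate $\|x_{n+1}-x^*\|$ and $\|x_n-x^*\|$ by the three summands on the left of \eqref{e01}. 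Passing to $z$-distances through (H3), $\|z_{n+1}-z^*\| \le c_1\|x_{n+1}-x^*\| + c_2\|x_n-x^*\|$, and collecting terms then yields $\|z_{n+1}-z^*\| \le (2c_1+c_2)\big[\|x^*-x_0\| + \sqrt{r_0/a} + \tfrac{9b}{4a}\phi_0\big] < \rho$ by \eqref{e01}, closing the induction. This confinement step is the main obstacle: the summability estimate one needs is only valid while the iterates remain in the ball where KL holds, yet staying in that ball is exactly what the estimate is used to establish, so the self-referential bookkeeping---in particular the accounting that makes the boundary displacement and the three terms of \eqref{e01} fit under the single factor $2c_1+c_2$---must be managed with care.

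With the orbit trapped in $B(z^*,\rho)$, the descent relation now holds for every $n$, and the telescoping sum gives $\sum_{n} d_n = \sum_n \|x_n - x_{n-1}\| < +\infty$. Consequently $(x_n)_{n\in\N}$ is Cauchy and converges to some $\ol x$, and applying (H3) with $z = (\ol x,\ol x)$ forces $z_n \to \ol z := (\ol x,\ol x)$, which lies in $\ol{B(z^*,\rho)}\subseteq B(z^*,\sigma)$. Since $d_n, d_{n+1} \to 0$, (H2) yields $\|\nabla F(z_n)\| \to 0$, and continuity of $\nabla F$ gives $\nabla F(\ol z) = 0$, that is $\ol z \in \crit(F)$. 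Finally, $(r_n)_n$ is nonincreasing and bounded below by $0$, hence $r_n \downarrow \ell \ge 0$ with $\ell = F(\ol z) - F^*$ by continuity of $F$; were $\ell > 0$, the KL inequality would force $\varphi'(r_n) \ge 1/\|\nabla F(z_n)\| \to +\infty$, contradicting the continuity of $\varphi'$ at the interior point $\ell \in (0,\eta)$. Therefore $\ell = 0$, i.e.\ $F(z_n) \to F^* = F(\ol z)$, which completes the proof.
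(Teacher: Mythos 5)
Your proposal is correct and follows essentially the same route as the paper's proof: the KL inequality combined with concavity of $\varphi$, (H1) and (H2) gives the quadratic estimate on $\|x_n-x_{n-1}\|$, a weighted AM--GM step turns it into a telescoping linear recursion, and an induction using (H3) and \eqref{e01} confines the orbit to $B(z^*,\rho)$ so that the recursion can be summed. The only (immaterial) difference is your Young-inequality calibration, which yields $5d_n\le d_{n+1}+\tfrac{9b}{a}(\phi_n-\phi_{n+1})$ in place of the paper's $2d_n\le d_{n+1}+\tfrac{9b}{4a}(\phi_n-\phi_{n+1})$; both telescope to the same constant $\tfrac{9b}{4a}$ appearing in \eqref{e01}.
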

Due to the technical details of the proof of Lemma \ref{abstrconv}, we will first present a sketch
of it in order to give a better insight.

 1. At first, our aim is to show by classical induction that $z_k\in B(z^*,\rho),\,F(z_k)<F(z^*)+\eta$ and the inequality $$2\|x_{k+1}-x_{k}\|\le\|x_{k}-x_{k-1}\|+\frac{9b}{4a}(\varphi(F(z_k)-F(z^*))-\varphi(F(z_{k+1})-F(z^*))$$
 holds, for every $k\ge 1.$ 

 To this end we show that the assumptions in the hypotheses of Lemma \ref{abstrconv} assures that  $z_1\in B(z^*,\rho)$ and $F(z_1)<F(z^*)+\eta.$

  Further, we show that if $z_k\in B(z^*,\rho),\,F(z_k)<F(z^*)+\eta$ for some $k\ge 1,$ then $$2\|x_{k+1}-x_{k}\|\le\|x_{k}-x_{k-1}\|+\frac{9b}{4a}(\varphi(F(z_k)-F(z^*))-\varphi(F(z_{k+1})-F(z^*))),$$
 which combined with the previous step assures that the base case, $k=1$, in our induction process holds.

  Next, we take the inductive step and show that the previous statement  holds for every $k\ge 1.$

 2. By summing up   the inequality obtained at 1. from $k=1$ to $k=n$ and letting $n\To+\infty$ we obtain that the sequence $(x_n)_{n\in\N}$ is convergent and from here the conclusion of the Lemma easily follows.

 We now pass to a detailed presentation of this proof.
 \begin{proof}
We divide the proof into the following steps.

{\bf Step I.} We show that $z_1\in B(z^*,\rho)$  and $F(z_1)< F(z^*)+\eta.$

Indeed, $z_0\in B(z^*,\rho)$ and \eqref{e00} assures that $F(z_1)\ge F(z^*).$
 Further, (H1) assures that $\|x_1-x_0\|\le\sqrt{\frac{F(z_0)-F(z_1)}{a}}$ and since $\|x_1-x^*\|=\|(x_1-x_0)+(x_0-x^*)\|\le \|x_1-x_0\|+\|x_0-x^*\|$  and
 $F(z_1)\ge F(z^*)$  the condition  \eqref{e01} leads to
  $$\|x_1-x^*\|\le \|x_0-x^*\|+ \sqrt{\frac{F(z_0)-F(z^*)}{a}}<\frac{\rho}{c_1+c_2}.$$
 Now, from (H3) we have $\|z_1-z^*\|\le c_1\|x_1-x^*\|+c_2\|x_{0}-x^*\|$ hence
  $$\|z_1-z^*\|< c_1\frac{\rho}{c_1+c_2}+c_2\frac{\rho}{c_1+c_2}= \rho.$$
  Thus, $z_1\in B(z^*,\rho),$ moreover \eqref{e00} and (H1) provide that $F(z^*)\le F(z_2)\le F(z_1)\le F(z_0)< F(z^*)+\eta.$

{\bf Step II.} Next we show that whenever for a $k\ge 1$ one has $z_k\in B(z^*,\rho),\,F(z_k)<F(z^*)+\eta$ then it holds that
\begin{equation}\label{fontos}
2\|x_{k+1}-x_{k}\|\le\|x_{k}-x_{k-1}\|+\frac{9b}{4a}(\varphi(F(z_k)-F(z^*))-\varphi(F(z_{k+1})-F(z^*)).
\end{equation}
Hence, let $k\ge 1$ and assume that  $z_k\in B(z^*,\rho),\,F(z_k)<F(z^*)+\eta$. Note that from (H1) and \eqref{e00} one has $F(z^*)\le F(z_{k+1})\le F(z_k)<F(z^*)+\eta,$ hence $$F(z_k)-F(z^*),F(z_{k+1})-F(z^*)\in[0,\eta),$$ thus \eqref{fontos} is well stated. Now, if $x_k=x_{k+1}$ then \eqref{fontos} trivially holds.

Otherwise, from (H1) and \eqref{e00} one has
\begin{equation}\label{inter1}
F(z^*)\le F(z_{k+1})<F(z_k)<F(z^*)+\eta.
\end{equation}
Consequently,  $z_k\in B(z^*,\rho)\cap  \{z\in\R^n: F(z^*)<F(z)<F(z^*)+\eta\}$ and by using the KL inequality  we get
$$\varphi'(F(z_k)-F(z^*))\|\n F(z_k)\|\geq 1.$$
Since $\varphi$ is concave, and  \eqref{inter1} assures that $F(z_{k+1})-F(z^*)\in[0,\eta),$ one has
$$\varphi(F(z_k)-F(z^*))-\varphi(F(z_{k+1})-F(z^*))\ge\varphi'(F(z_k)-F(z^*))(F(z_k)-F(z_{k+1})),$$
consequently,
$$\varphi(F(z_k)-F(z^*))-\varphi(F(z_{k+1})-F(z^*))\ge\frac{F(z_k)-F(z_{k+1})}{\|\n F(z_k)\|}.$$
Now, by  using (H1) and (H2) we get  that
$$\varphi(F(z_k)-F(z^*))-\varphi(F(z_{k+1})-F(z^*))\ge\frac{a\|x_{k+1}-x_{k}\|^2}{b(\|x_{k+1}-x_{k}\|+\|x_k-x_{k-1}\|)}.$$
Consequently,
$$\|x_{k+1}-x_{k}\|\le\sqrt{\frac{b}{a}\left(\varphi(F(z_k)-F(z^*))-\varphi(F(z_{k+1})-F(z^*))\right)(\|x_{k+1}-x_{k}\|+\|x_k-x_{k-1}\|)}$$
and by arithmetical-geometrical mean inequality we have
\begin{align}\nonumber\|x_{k+1}-x_{k}\|&\le\frac{\|x_{k+1}-x_{k}\|+\|x_k-x_{k-1}\|}{3}\\
\nonumber&+\frac{3b}{4a}(\varphi(F(z_k)-F(z^*))-\varphi(F(z_{k+1})-F(z^*))),
\end{align}
which leads to \eqref{fontos}, that is
$$2\|x_{k+1}-x_{k}\|\le\|x_{k}-x_{k-1}\|+\frac{9b}{4a}(\varphi(F(z_k)-F(z^*))-\varphi(F(z_{k+1})-F(z^*)).$$

{\bf Step III.} Now we show by induction that \eqref{fontos} holds for every $k\ge 1.$  Indeed, Step II. can be applied for $k=1$ since according to Step I. $z_1\in B(z^*,\rho)$  and $F(z_1)< F(z^*)+\eta.$ Consequently, for $k=1$ the inequality \eqref{fontos} holds.

 Assume that \eqref{fontos} holds for every $k\in\{1,2,...,n\}$ and we show also that \eqref{fontos} holds for $k=n+1.$ Arguing as at Step II., the condition (H1) and \eqref{e00} assure that $F(z^*)\le F(z_{n+1})\le F(z_n)<F(z^*)+\eta,$ hence it remains to show that $z_{n+1}\in B(z^*,\rho).$
 By using the triangle inequality and (H3) one has
 \begin{align}\label{fonti}
 \|z_{n+1}-z^*\|&\le c_1\|x_{n+1}-x^*\|+c_2\|x_n-x^*\|\\
 \nonumber&=c_1\|(x_{n+1}-x_n)+(x_n-x_{n-1})+\cdots+(x_0-x^*)\|\\
 \nonumber&+c_2\|(x_{n}-x_{n-1})+(x_{n-1}-x_{n-2})+\cdots+(x_0-x^*)\|\\
\nonumber&\le c_1\|x_{n+1}-x_n\|+(c_1+c_2)\|x_0-x^*\|+(c_1+c_2)\sum_{k=1}^{n}\|x_{k}-x_{k-1}\|.
\end{align}

 By summing up \eqref{fontos} from $k=1$ to $k=n$ we obtain
 \begin{equation}\label{fontos1}
\sum_{k=1}^n \|x_k-x_{k-1}\|\le 2\|x_{1}-x_{0}\|-2\|x_{n+1}-x_{n}\|+\frac{9b}{4a}(\varphi(F(z_1)-F(z^*))-\varphi(F(z_{n+1})-F(z^*)).
\end{equation}

Combining \eqref{fonti} and \eqref{fontos1} and neglecting the negative terms we get
$$\|z_{n+1}-z^*\|\le (2c_1+2c_2)\|x_{1}-x_0\|+(c_1+c_2)\|x_0-x^*\|+(c_1+c_2)\frac{9b}{4a}\varphi(F(z_1)-F(z^*)).$$

But $\varphi$ is strictly increasing and $F(z_1)-F(z^*)\le F(z_0)-F(z^*)$, hence
$$\|z_{n+1}-z^*\|\le (2c_1+2c_2)\|x_{1}-x_0\|+(c_1+c_2)\|x_0-x^*\|+(c_1+c_2)\frac{9b}{4a}\varphi(F(z_0)-F(z^*)).$$

According to (H1) one has
\begin{equation}\label{x1}
\|x_{1}-x_0\|\le\sqrt{\frac{F(z_{0})-F(z_1)}{a}}\le \sqrt{\frac{F(z_{0})-F(z^*)}{a}},
\end{equation}
hence, from \eqref{e01} we get
$$\|z_{n+1}-z^*\|\le (c_1+c_2)\left(\|x_0-x^*\|+2\sqrt{\frac{F(z_{0})-F(z^*)}{a}}+\frac{9b}{4a}\varphi(F(z_0)-F(z^*))\right)<\rho.$$

Hence, we have shown so far that $z_n\in B(z^*,\rho)$ for all $n\in\N.$

{\bf Step IV.} According to Step III. the relation \eqref{fontos} holds for every $k\ge 1.$ But this implies that \eqref{fontos1} holds for every $n\ge 1.$ By using \eqref{x1}  and neglecting the nonpositive terms, \eqref{fontos1} becomes
\begin{equation}\label{fontos2}
\sum_{k=1}^n \|x_k-x_{k-1}\|\le 2\sqrt{\frac{F(z_{0})-F(z^*)}{a}}+\frac{9b}{4a}\varphi(F(z_1)-F(z^*)).
\end{equation}
Now letting $n\To+\infty$ in \eqref{fontos2} we obtain that
$$\sum_{k=1}^{\infty} \|x_k-x_{k-1}\|<+\infty.$$

Obviously the sequence $S_n=\sum_{k=1}^n\|x_k-x_{k-1}\|$ is Cauchy, hence, for all $\e>0$ there exists $N_{\e}\in \N$   such that for all $n\ge N_\e$ and for all $p\in \N$ one has
$$S_{n+p}-S_n\le \e.$$
But
$$S_{n+p}-S_n=\sum_{k={n+1}}^{n+p}\|x_k-x_{k-1}\|\ge \left\|\sum_{k={n+1}}^{n+p}(x_k-x_{k-1})\right\|=\|x_{n+p}-x_n\|$$
hence the sequence $(x_n)_{n\in\N}$ is Cauchy, consequently is convergent. Let
$$\lim_{n\To+\infty}x_n=\ol x.$$

Let $\ol z=(\ol x,\ol x).$ Now, from (H3) we have
$$\lim_{n\To+\infty}\|z_n-\ol z\|\le \lim_{n\To+\infty}(c_1\|x_n-\ol x\|+c_2\|x_{n-1}-\ol x\|)=0,$$
consequently $(z_n)_{n\in\N}$ converges to $\ol z.$

Further, $(z_n)_{n\in\N}\subseteq B(z^*,\rho)$ and $\rho<\sigma$, hence $\ol z\in B(z^*,\sigma).$
Moreover, from (H2) we have
$$\|\n F(\ol z)\|=\lim_{n\To+\infty}\|\n F(z_n)\|\le\lim_{n\To+\infty}b(\|x_{n+1}-x_n\|+\|x_n-x_{n-1}\|)=0$$
which shows that $\ol z\in\crit(F).$ Consequently, $\ol z\in B(z^*,\sigma)\cap \crit (F)$.

Finally, since $z_n\To \ol z,\, n\To+\infty$ an $F$ is continuous it is obvious that $\lim_{n\To+\infty} F(z_n)=F(\ol z).$ Further, since $F(z^*)\le F(z_n)<F(z^*)+\eta$ for all $n\ge 1$ and the sequence $(F(z_n))_{n\ge 1}$ is decreasing, obviously $F(z^*)\le F(\ol z)<F(z^*)+\eta.$ Assume that $F(z^*)< F(\ol z).$ Then, one has
$$\ol z\in B(z^*,\sigma)\cap  \{z\in\R^n: F(z^*)<F(z)<F(z^*)+\eta\}$$ and by using the KL inequality  we get
$$\varphi'(F(\ol z)-F(z^*))\|\n F(\ol z)\|\geq 1,$$
impossible since $\|\n F(\ol z)\|=0.$ Consequently $F(\ol z)=F(z^*).$
 \end{proof}
\begin{remark} One can observe that our conditions in Lemma \ref{abstrconv} are slightly different  to those in \cite{att-b-sv2013} and \cite{OCBP}. Indeed, we must assume that $z_0\in B(z^*,\rho)$ and  in the  right hand side of \eqref{e01} we have $\frac{\rho}{c_1+c_2}.$

Though does not fit into the framework of this paper, we are confident that  Lemma \ref{abstrconv} can be extended to the case  when we do not assume that $F$ is continuously Fr\'echet differentiable but only that $F$ is proper and lower semicontinuous. Then, the gradient of $F$ can be replaced by the limiting subdifferential of $F$. These assumptions will imply some slight modifications in the conclusion of Lemma \ref{abstrconv} and only the lines of proof at Step IV. must be substantially modified.
\end{remark}
\begin{corollary}\label{corabstrconv} Assume that the sequences from the definition of $(z_n)_{n\in N}$ satisfy $v_n=x_n+\a_n(x_n-x_{n-1})$ and $w_n=x_n+\b_n(x_n-x_{n-1})$  for all $n\ge 1$,  where $(\a_n)_{n\in\N},(\b_n)_{n\in\N}$ are  bounded sequences. Let $c=sup_{n\in\N}(|\a_n|+|\b_n|).$ Then (H3) holds with $c_1=2+c$ and $c_2=c$. Further, Lemma \ref{abstrconv} holds true if we replace \eqref{e00} in its hypotheses by
 $$\eta < \frac{a(\sigma-\rho)^2}{4(1+c)^2}\mbox{ and }F(z_n)\ge F(z^*),\mbox{ for all }n\in\N,\,n\ge 1.$$
 \end{corollary}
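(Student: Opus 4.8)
The plan is to compute $\|z_n - z\|$ directly for $z = (x,x)$ using the explicit forms of $v_n$ and $w_n$. First I would write $v_n - x = (x_n - x) + \a_n(x_n - x_{n-1})$ and $w_n - x = (x_n - x) + \b_n(x_n - x_{n-1})$. Adding and subtracting $x$ inside the increment gives $x_n - x_{n-1} = (x_n - x) - (x_{n-1} - x)$, so that $\|v_n - x\| \le (1 + |\a_n|)\|x_n - x\| + |\a_n|\|x_{n-1} - x\|$ and similarly for $w_n$. Since $\|z_n - z\|^2 = \|v_n - x\|^2 + \|w_n - x\|^2$, I would bound $\|z_n - z\| \le \|v_n - x\| + \|w_n - x\|$ and collect terms. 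The coefficient of $\|x_n - x\|$ becomes $2 + |\a_n| + |\b_n| \le 2 + c$ and the coefficient of $\|x_{n-1} - x\|$ becomes $|\a_n| + |\b_n| \le c$. This yields (H3) with $c_1 = 2 + c$ and $c_2 = c$, uniformly in $n$ because $c = \sup_n(|\a_n| + |\b_n|)$ is finite by boundedness.

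\textbf{Deriving the self-improving property \eqref{e00} from the new hypothesis.} The substantive part is to show that the condition $\eta < a(\sigma-\rho)^2/(4(1+c)^2)$ together with $F(z_n) \ge F(z^*)$ for all $n$ implies the implication in \eqref{e00}, namely that $z_n \in B(z^*,\rho)$ forces $z_{n+1} \in B(z^*,\sigma)$. The clause $F(z_{n+1}) \ge F(z^*)$ is now part of the standing assumption, so only the containment $z_{n+1} \in B(z^*,\sigma)$ needs proof. I would estimate $\|z_{n+1} - z^*\|$ via (H3) applied with $z = z^* = (x^*,x^*)$, giving $\|z_{n+1} - z^*\| \le c_1\|x_{n+1} - x^*\| + c_2\|x_n - x^*\|$. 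The idea is that $x_{n+1}$ and $x_n$ are close to $x^*$ because $z_n \in B(z^*,\rho)$ controls $\|x_n - x^*\|$ and $\|x_{n-1} - x^*\|$, while (H1) controls the step $\|x_{n+1} - x_n\|$.

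\textbf{The key estimate.} From $z_n \in B(z^*,\rho)$ I get $\|v_n - x^*\| \le \rho$ and $\|w_n - x^*\| \le \rho$; combining the two expressions for $v_n$ and $w_n$ lets me extract a bound on $\|x_n - x^*\|$ of order $\rho$. Meanwhile (H1) gives $\|x_{n+1} - x_n\| \le \sqrt{(F(z_n) - F(z_{n+1}))/a}$, and since $F(z^*) \le F(z_{n+1}) \le F(z_n) \le F(z_0) < F(z^*) + \eta$ this step is bounded by $\sqrt{\eta/a}$. The hypothesis $\eta < a(\sigma - \rho)^2/(4(1+c)^2)$ is exactly calibrated so that $(2c_1 + c_2)\sqrt{\eta/a}$ — the factor $2c_1 + c_2 = 2(2+c) + c$ absorbing the worst-case accumulation through the triangle inequality as in Step III of the previous proof — stays below $\sigma - \rho$. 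Writing $\|x_{n+1} - x^*\| \le \|x_{n+1} - x_n\| + \|x_n - x^*\|$ and feeding both pieces into (H3), the $\rho$-part keeps $z_{n+1}$ within $\rho$ of the $\rho$-ball's reach and the $\sqrt{\eta/a}$-part adds at most $\sigma - \rho$, so $\|z_{n+1} - z^*\| < \sigma$.

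\textbf{Concluding.} Once \eqref{e00} is recovered, Lemma \ref{abstrconv} applies verbatim, since every other hypothesis (the KL property, conditions (H1)–(H2), and the initialization \eqref{e01}) is untouched. \emph{The main obstacle} I anticipate is the bookkeeping in the key estimate: isolating a clean bound on $\|x_n - x^*\|$ from the two ball constraints on $v_n$ and $w_n$, and verifying that the constant $(1+c)^2$ in the denominator of the $\eta$-threshold is the correct one to close the inequality $(2c_1 + c_2)\sqrt{\eta/a} < \sigma - \rho$ rather than merely a convenient overestimate. Getting the constant to match $4(1+c)^2$ exactly, as opposed to some larger admissible constant, will require tracking the triangle-inequality accumulation carefully rather than bounding crudely.
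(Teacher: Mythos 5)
Your verification of (H3) is fine, but the second half of your argument does not close, and the failure is structural rather than a matter of bookkeeping. You propose to bound $\|z_{n+1}-z^*\|$ by applying (H3) at $z=z^*$, i.e. $\|z_{n+1}-z^*\|\le c_1\|x_{n+1}-x^*\|+c_2\|x_n-x^*\|$, and then to extract a bound of order $\rho$ on $\|x_n-x^*\|$ from the ball constraint on $z_n$. Even granting the best possible such extraction (say $\|x_n-x^*\|\le \rho+c\sqrt{\eta/a}$, which already requires (H1) and not just the two ball constraints — note that when $\a_n=\b_n$ the constraints on $v_n$ and $w_n$ coincide and by themselves give nothing on $\|x_n-x^*\|$), the coefficient multiplying $\rho$ after substitution is $c_1+c_2=2+2c>1$. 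You would need $\sigma>(2+2c)\rho+\cdots$, which is not assumed; the hypothesis $\eta<a(\sigma-\rho)^2/(4(1+c)^2)$ only controls the $\sqrt{\eta/a}$ contribution, not a $\rho$-term inflated by a factor larger than $1$. Relatedly, your target inequality $(2c_1+c_2)\sqrt{\eta/a}<\sigma-\rho$ uses the wrong constant: $2c_1+c_2=4+3c$, whereas the threshold $4(1+c)^2=(2+2c)^2$ is calibrated to the constant $2+2c$, and $(4+3c)\sqrt{\eta/a}<\sigma-\rho$ does not follow from the stated bound on $\eta$. (The factor $2c_1+c_2$ belongs to the initialization condition \eqref{e01}, not to this step.)

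The decomposition that actually works is $\|z_{n+1}-z^*\|\le\|z_{n+1}-z_n\|+\|z_n-z^*\|\le\|z_{n+1}-z_n\|+\rho$, so that $\rho$ enters with coefficient exactly $1$. The difference $z_{n+1}-z_n$ is then a combination of the two consecutive increments $x_{n+1}-x_n$ and $x_n-x_{n-1}$ only, with
$$\|z_{n+1}-z_n\|\le(2+|\a_{n+1}|+|\b_{n+1}|)\|x_{n+1}-x_n\|+(|\a_n|+|\b_n|)\|x_n-x_{n-1}\|\le(2+c)\|x_{n+1}-x_n\|+c\|x_n-x_{n-1}\|.$$
Each increment is bounded by $\sqrt{\eta/a}$ via (H1) (applied at indices $n$ and $n+1$, using the monotonicity of $F(z_n)$ and the standing assumption $F(z_n)\ge F(z^*)$), giving $\|z_{n+1}-z_n\|<(2+2c)\sqrt{\eta/a}<\sigma-\rho$, which is exactly what the threshold on $\eta$ provides. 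No bound on $\|x_n-x^*\|$ itself is needed.
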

 \begin{proof} The claim that (H3) holds with $c_1=2+c$ and $c_2=c$ is an easy verification. We have to show that \eqref{e00} holds, that is, $z_n\in B(z^*,\rho)$ implies $z_{n+1}\in B(z^*,\sigma)$ for all $n\in\N.$

According to (H1), the assumption that $F(z_n)\ge F(z^*)$ for all $n\ge 1$ and  the hypotheses of Lemma \ref{abstrconv},  we have
$$\|x_n-x_{n-1}\|\le\sqrt{\frac{F(z_{n-1})-F(z_{n})}{a}}\le\sqrt{\frac{F(z_0)-F(z_{n})}{a}}\le\sqrt{\frac{F(z_0)-F(z^*)}{a}}<\sqrt{\frac{\eta}{a}}$$
and
$$\|x_{n+1}-x_{n}\|\le\sqrt{\frac{F(z_{n})-F(z_{n+1})}{a}}\le\sqrt{\frac{F(z_0)-F(z_{n+1})}{a}}\le\sqrt{\frac{F(z_0)-F(z^*)}{a}}<\sqrt{\frac{\eta}{a}}$$
for all $n\ge 1$.

Assume now that $n\ge 1$ and $z_n\in B(z^*,\rho).$
Then, by using the triangle inequality we get
$$\|z_{n+1}-z^*\|=\|(z_{n+1}-z_n)+(z_n-z^*)\|\le \|z_{n+1}-z_n\|+\|z_n-z^*\|\le\|z_{n+1}-z_n\|+\rho.$$

Further,
\begin{align}\nonumber\|z_{n+1}-z_n\|&=\|(v_{n+1}-v_n,w_{n+1}-w_n)\|\\
\nonumber&\le\|x_{n+1}+\a_{n+1}(x_{n+1}-x_{n})-x_n-\a_n(x_n-x_{n-1})\|\\
\nonumber&+\|x_{n+1}+\b_{n+1}(x_{n+1}-x_{n})-x_n-\b_n(x_n-x_{n-1})\|\\
\nonumber&\le(2+|\a_{n+1}|+|\b_{n+1}|)\|x_{n+1}-x_n\|+(|\a_n|+|\b_n|)\|x_n-x_{n-1}\|\\
\nonumber&\le(2+c)\|x_{n+1}-x_n\|+c\|x_{n}-x_{n-1}\|,
\end{align}
where $c=\sup_{n\in\N}(|\a_{n}|+|\b_{n}|).$

Consequently, we have
$$\|z_{n+1}-z^*\|\le (2+c)\|x_{n+1}-x_n\|+c\|x_{n}-x_{n-1}\|+\rho<(2+2c)\sqrt{\frac{\eta}{a}}+\rho\le\sigma,$$ which
is exactly $z_{n+1}\in B(z^*,\sigma).$ Further, arguing analogously as at Step I. in the proof of Lemma \ref{abstrconv}, we obtain that $z_{1}\in  B(z^*,\rho)\subseteq B(z^*,\sigma)$ and this concludes the proof.
\end{proof}

Now we are ready to formulate the following result.

\begin{theorem}\label{thabstrconv}(Convergence to a critical point). Let $F : \R^m\times\R^m\To\R$ be a continuously Fr\'echet differentiable function
and let $(z_n)_{n\in N}=(x_n+\a_n(x_n-x_{n-1}),x_n+\b_n(x_n-x_{n-1}))_{n\in\N}$  be a sequence that
satisfies (H1) and (H2), (with the convention $x_{-1}\in\R^m$), where $(\a_n)_{n\in\N},(\b_n)_{n\in\N}$ are bounded sequences. Moreover, assume that $\omega((z_n)_{n\in\N})$ is nonempty and that $F$ has the Kurdyka-{\L}ojasiewicz property at a point $z^*=(x^*,x^*)\in \omega((z_n)_{n\in\N}).$
Then, the sequence $(x_n)_{n\in\N}$ converges to $x^*$, $(z_n)_{n\in\N}$ converges to $z^*$ and $z^*\in\crit(F).$
\end{theorem}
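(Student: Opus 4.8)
The plan is to reduce the theorem to Corollary \ref{corabstrconv} by locating a sufficiently advanced index from which the sequence can be ``restarted'' so that the quantitative initialization hypotheses of Lemma \ref{abstrconv} are met. First I would extract the basic consequences of (H1). Since (H1) gives $a\|x_n-x_{n-1}\|^2\le F(z_n)-F(z_{n+1})$, the sequence $(F(z_n))_{n\in\N}$ is nonincreasing. Because $z^*\in\omega((z_n)_{n\in\N})$, there is a subsequence $z_{n_j}\To z^*$, and continuity of $F$ yields $F(z_{n_j})\To F(z^*)$; a nonincreasing sequence possessing a subsequence convergent to $F(z^*)$ converges entirely to its infimum $F(z^*)$, so $F(z_n)\To F(z^*)$ and $F(z_n)\ge F(z^*)$ for every $n$. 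Telescoping (H1) then gives $a\sum_{n\ge0}\|x_n-x_{n-1}\|^2\le F(z_0)-F(z^*)<+\infty$, hence $\|x_n-x_{n-1}\|\To 0$. Writing $v_{n_j}=x_{n_j}+\a_{n_j}(x_{n_j}-x_{n_j-1})\To x^*$ and using that $(\a_n)$ is bounded while $\|x_{n_j}-x_{n_j-1}\|\To0$, I conclude $x_{n_j}\To x^*$.

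Second, I would fix the constants for the corollary. Let $c=\sup_{n\in\N}(|\a_n|+|\b_n|)<+\infty$, so by Corollary \ref{corabstrconv} condition (H3) holds with $c_1=2+c$ and $c_2=c$. Choose $\sigma>0$ with $B(z^*,\sigma)\subseteq U$ and any $\rho\in(0,\sigma)$. Since the restriction of $\varphi$ to a smaller interval $[0,\eta')$, $\eta'<\eta$, still lies in $\Theta_{\eta'}$ and the KL inequality persists on the correspondingly smaller sublevel set, I may shrink $\eta$ so that $\eta<\frac{a(\sigma-\rho)^2}{4(1+c)^2}$. This is exactly the replacement condition required by Corollary \ref{corabstrconv}, while $F(z_n)\ge F(z^*)$ for all $n$ has already been verified.

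Third comes the decisive step: choosing the restart index. Along $(n_j)$ we have $x_{n_j}\To x^*$, $F(z_{n_j})-F(z^*)\To0$ and $z_{n_j}\To z^*$; since $\varphi$ is continuous with $\varphi(0)=0$, the quantity appearing in the left-hand side of \eqref{e01} evaluated at $z_{n_j}$, namely
$$\|x^*-x_{n_j}\|+\sqrt{\frac{F(z_{n_j})-F(z^*)}{a}}+\frac{9b}{4a}\varphi(F(z_{n_j})-F(z^*)),$$
tends to $0$. Hence for $j$ large enough I may select $N:=n_j$ for which simultaneously $z_N\in B(z^*,\rho)$, $F(z^*)\le F(z_N)<F(z^*)+\eta$, and the displayed quantity is strictly below $\frac{\rho}{2c_1+c_2}$, i.e.\ \eqref{e01} holds. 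I then apply Corollary \ref{corabstrconv} to the shifted sequence $(z_{N+n})_{n\in\N}$, with $x_{N-1}$ playing the role of the $(-1)$-th term, whose inertial coefficients $\a_{N+n},\b_{N+n}$ remain bounded by $c$ and which inherits (H1), (H2) and $F(z_{N+n})\ge F(z^*)$.

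Finally I would identify the limit. The corollary yields $\sum_{n}\|x_{N+n}-x_{N+n-1}\|<+\infty$, so $(x_n)_{n\in\N}$ converges, say $x_n\To\ol x$, and $(z_{N+n})$ converges to $\ol z=(\ol x,\ol x)\in\crit(F)$ with $F(\ol z)=F(z^*)$. Since shifting does not alter the limit and $x_{n_j}\To x^*$, I obtain $\ol x=x^*$, whence $\ol z=z^*$ and $z^*\in\crit(F)$. Convergence of the full sequence $(z_n)_{n\in\N}$ to $z^*$ then follows from $x_n\To x^*$, $\|x_n-x_{n-1}\|\To0$ and the boundedness of $(\a_n),(\b_n)$, which force $v_n\To x^*$ and $w_n\To x^*$. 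The main obstacle is the third step: genuinely meeting the quantitative initialization inequality \eqref{e01}, which is where both $\|x_n-x_{n-1}\|\To0$ (needed to upgrade $v_{n_j}\To x^*$ to $x_{n_j}\To x^*$) and the freedom to shrink $\eta$ are essential.
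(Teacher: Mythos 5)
Your proposal is correct and follows essentially the same route as the paper: establish $F(z_n)\To F(z^*)$ and $F(z_n)\ge F(z^*)$ from monotonicity plus the convergent subsequence, upgrade $z_{n_j}\To z^*$ to $x_{n_j}\To x^*$ via $\|x_{n_j}-x_{n_j-1}\|\To 0$ and boundedness of $(\a_n)$, shrink $\eta$, pick a restart index along the subsequence so that the initialization inequality \eqref{e01} holds, and invoke Corollary \ref{corabstrconv} on the shifted sequence before identifying the limit as $x^*$. The only (harmless) deviations are that you derive $\|x_n-x_{n-1}\|\To0$ for the full sequence by telescoping where the paper only needs it along the subsequence, and that you explicitly justify the shrinking of $\eta$, which the paper leaves implicit.
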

\begin{proof}
We will apply Corollary \ref{corabstrconv}. Since $z^*=(x^*,x^*)\in \omega((z_n)_{n\in\N})$ there exists a subsequence
$(z_{n_k})_{k\in\N}$ such that
$$z_{n_k}\To z^*,\, k\To+\infty.$$
From (H1) we get that the sequence $(F(z_n))_{n\in\N}$ is decreasing and obviously $F(z_{n_k})\To F(z^*),\, k\To+\infty,$ which implies that
\begin{equation}\label{tce1}
F(z_n)\To F(z^*),\,n\To+\infty\mbox{ and }F(z_n)\ge F(z^*),\,\mbox{for all }n\in\N.
\end{equation}

We show next that $x_{n_k}\To x^*,\, k\To+\infty.$ Indeed, from (H1) one has
$$a\|x_{n_k}-x_{{n_k}-1}\|^2\le F(z_{n_k-1})-F(z_{{n_k}})$$
and obviously the right side of the above inequality goes to $0$ as $k\To+\infty.$
Hence,
$$\lim_{k\To+\infty}(x_{n_k}-x_{n_k-1})=0.$$
Further, since the sequences $(\a_n)_{n\in\N},(\b_n)_{n\in\N}$ are bounded we get
$$\lim_{k\To+\infty}\a_{n_k}(x_{n_k}-x_{n_k-1})=0$$
and
$$\lim_{k\To+\infty}\b_{n_k}(x_{n_k}-x_{n_k-1})=0.$$
Finally, $z_{n_k}\To z^*,\, k\To+\infty$ is equivalent to
$$x_{n_k}-x^*+\a_{n_k}(x_{n_k}-x_{{n_k}-1})\To 0,\, k\To+\infty$$
and
$$x_{n_k}-x^*+\b_{n_k}(x_{n_k}-x_{{n_k}-1})\To 0,\, k\To+\infty,$$
which lead to the desired conclusion, that is
\begin{equation}\label{tce2}
x_{n_k}\To x^*,\, k\To+\infty.
\end{equation}
The KL property around $z^*$ states the existence of quantities $\varphi$, $U$, and $\eta$ as in Definition \ref{KL-property}.
Let $\sigma > 0 $ be such that $ B(z^*, \sigma)\subseteq U$ and $\rho\in(0,\sigma).$  If necessary we shrink $\eta$ such that
$\eta < \frac{a(\sigma-\rho)^2}{4(1+c)^2},$ where $c=\sup_{n\in\N}(|\a_n|+|\b_n|).$

Now, since the functions $F$ and $\varphi$ are continuous and $F(z_n)\To F(z^*),\,n\To+\infty$, further $\varphi(0)=0$ and $z_{n_k}\To z^*,\,x_{n_k}\To x^*,\, k\To+\infty$ we conclude that there exists
$n_0\in\N,\,n_0\ge 1$ such that $z_{n_0}\in B(z^*,\rho)$ and $F(z^*)\le F(z_{n_0})<F(z^*)+\eta,$
moreover
$$\|x^*-x_{n_0}\|+2\sqrt{\frac{F(z_{n_0})-F(z^*)}{a}}+\frac{9b}{4a}\varphi(F(z_{n_0})-F(z^*))<\frac{\rho}{c_1+c_2}.$$

Hence, Corollary \ref{corabstrconv} and consequently Lemma \ref{abstrconv} can be applied to the sequence $(u_n)_{n\in\N},\, u_n=z_{n_0+n}.$

Thus, according to Lemma \ref{abstrconv}, $(u_n)_{n\in\N}$ converges to a point $(\ol x,\ol x)\in\crit(F),$ consequently  $(z_n)_{n\in\N}$ converges to $(\ol x,\ol x).$ But then, since $\omega((z_n)_{n\in\N})=\{(\ol x,\ol x)\}$ one has $x^*=\ol x.$  Hence,
$(x_n)_{n\in\N}$ converges to $x^*$, $(z_n)_{n\in\N}$ converges to $z^*$ and $z^*\in\crit(F).$
\end{proof}

\begin{remark} We emphasize that the main advantage of the abstract convergence results from this section is that can be applied also for algorithms where the the gradient of the objective is evaluated in iterations that contain the inertial therm. This is due to the fact that the sequence $(z_n)_{n\in\N}$
may have the form proposed in Corollary \ref{corabstrconv} and Theorem \ref{thabstrconv}.
\end{remark}

\subsection{The convergence of the numerical method \eqref{generaldiscrete}}

Based on the abstract convergence results obtained in the previous section, in this section we show the convergence of the sequences generated by Algorithm \eqref{generaldiscrete}. The main tool in our forthcoming analysis is the so called descent lemma, see \cite{Nest}, which in our setting reads as

\begin{equation}\label{desc}g(y)\le g(x)+\<\n g(x),y-x\>+\frac{L_g}{2}\|y-x\|^2,\,\forall x,y\in\R^m.
\end{equation}

Now we are able to obtain a decrease property for the iterates generated by \eqref{generaldiscrete}.

\begin{lemma}\label{decreasing} In the settings of problem \eqref{opt-pb}, for some starting points $x_0,x_{-1}\in\R^m,$ let $(x_n)_{n\in\N},\,(y_n)_{n\in\N}$ be the sequences generated by the numerical scheme \eqref{generaldiscrete}. Consider the sequences

$\ds A_{n-1}=\frac{2-s L_g}{2s}\left(\frac{(1+\b)n+\a}{n+\a}\right)^2-\frac{\b n((1+\b)n+\a)}{s(n+\a)^2},$

$\ds B_n=\frac{2-s L_g}{2s}\left(\frac{\b n}{n+\a}\right)^2,$

$\ds C_{n-1}=\frac{2-s L_g}{2s}\frac{\b n-\b}{n+\a-1}\frac{(1+\b)n+\a}{n+\a}-\frac{1}{2s}\frac{\b n-\b}{n+\a-1}\frac{\b n}{n+\a}$
and

$\ds \d_n=\frac12(A_{n-1}-C_{n-1}-B_n+C_n),$
for all $n\in\N,\,n\ge 1$.
\\Then, there exists $N\in\N$ such that
\begin{itemize}
\item[(i)] The sequence $ \left(g(y_{n})+\d_{n}\|x_{n}-x_{n-1}\|^2\right)_{n\ge N}$ is nonincreasing and $\d_n>0$ for all $n\ge N$.
\end{itemize}
Assume that $g$ is bounded from below. Then, the following statements hold.
\begin{itemize}
\item[(ii)] The sequence $\left(g(y_{n})+\d_{n}\|x_{n}-x_{n-1}\|^2\right)_{n\in \N}$ is convergent;
\item[(iii)] $\sum_{n\ge 1}\|x_n-x_{n-1}\|^2<+\infty.$
\end{itemize}
\end{lemma}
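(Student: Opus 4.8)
The plan is to establish the core descent estimate by applying the descent lemma (Lemma \ref{desc}) to consecutive iterates, then organizing the result into the claimed energy functional. First I would apply Lemma \ref{desc} with $x=y_n$ and $y=x_{n+1}=y_n-s\n g(y_n)$, which gives $g(x_{n+1})\le g(y_n)-s\|\n g(y_n)\|^2+\frac{L_g s^2}{2}\|\n g(y_n)\|^2 = g(y_n)-s\left(1-\frac{sL_g}{2}\right)\|\n g(y_n)\|^2$. Since $s<\frac{2(1-\b)}{L_g}<\frac{2}{L_g}$, the coefficient $\left(1-\frac{sL_g}{2}\right)$ is positive. Using the scheme, $\n g(y_n)=\frac{1}{s}(y_n-x_{n+1})$, so $\|\n g(y_n)\|^2=\frac{1}{s^2}\|x_{n+1}-y_n\|^2$, and I would substitute $y_n-x_{n+1}$ and $y_n-x_n=\frac{\b n}{n+\a}(x_n-x_{n-1})$ to rewrite everything in terms of the iterate gaps $\|x_{n+1}-x_n\|$ and $\|x_n-x_{n-1}\|$.

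Next I would need to compare $g(y_{n+1})$ with $g(x_{n+1})$ so that the energy sequence is expressed consistently at $y$-values. The difficulty here is that $y_{n+1}=x_{n+1}+\frac{\b(n+1)}{n+1+\a}(x_{n+1}-x_n)$, so another application of the descent lemma (or its convexity-free analogue) relates $g(y_{n+1})$ to $g(x_{n+1})$ plus gradient and quadratic terms in $\|x_{n+1}-x_n\|$. Combining the two descent inequalities and grouping the quadratic terms in $\|x_{n+1}-x_n\|^2$ and $\|x_n-x_{n-1}\|^2$ is exactly what produces the coefficients $A_{n-1}$ and $C_{n-1}$: the term $A_{n-1}\|x_{n+1}-x_n\|^2$ collects the gain from the gradient step together with the penalty from re-expressing at $y_{n+1}$, while $C_{n-1}\|x_n-x_{n-1}\|^2$ captures the cross term coming from $y_n-x_n$. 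This bookkeeping is the main obstacle: one must carefully expand $\|x_{n+1}-y_n\|^2=\|(x_{n+1}-x_n)-\frac{\b n}{n+\a}(x_n-x_{n-1})\|^2$, which generates a cross term $\langle x_{n+1}-x_n, x_n-x_{n-1}\rangle$ that has to be absorbed, presumably via a Cauchy--Schwarz or Young-type inequality, into the pure squared terms so that the telescoping coefficient $\d_n=A_{n-1}-C_{n-1}$ emerges.

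For part (i), I would verify the asymptotic behavior of the coefficients as $n\To+\infty$. Since $\frac{\b n}{n+\a}\To\b$ and $\frac{(1+\b)n+\a}{n+\a}\To 1+\b$, one checks that $A_{n-1}\To\frac{2-sL_g}{2s}(1+\b)^2-\frac{\b(1+\b)}{s}$ and $C_{n-1}\To\frac{2-sL_g}{2s}\b(1+\b)-\frac{\b^2}{2s}$, so $\d_n$ tends to a limit that, using $s<\frac{2(1-\b)}{L_g}$, is strictly positive; hence there exists $N$ with $\d_n>0$ for all $n\ge N$. The decrease of $\left(g(y_n)+\d_n\|x_n-x_{n-1}\|^2\right)_{n\ge N}$ then follows from arranging the combined inequality into the telescoping form $g(y_{n+1})+\d_{n+1}\|x_{n+1}-x_n\|^2\le g(y_n)+\d_n\|x_n-x_{n-1}\|^2$, which requires that the coefficient multiplying $\|x_{n+1}-x_n\|^2$ on the right, after the grouping, dominates $\d_{n+1}$; this is again an asymptotic coefficient comparison.

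Finally, for parts (ii) and (iii) I would argue as follows. Under the boundedness-from-below assumption on $g$, the decreasing sequence $\left(g(y_n)+\d_n\|x_n-x_{n-1}\|^2\right)_{n\ge N}$ is bounded below (since $\d_n>0$ and $g$ is bounded below), hence convergent, giving (ii). For (iii), I would return to the one-step inequality before fully telescoping, which reads schematically $g(y_{n+1})+\d_{n+1}\|x_{n+1}-x_n\|^2 + \mu_n\|x_{n+1}-x_n\|^2 \le g(y_n)+\d_n\|x_n-x_{n-1}\|^2$ for some $\mu_n$ bounded below by a positive constant for large $n$ (this positive residual is precisely what is left over after forming the telescoping energy). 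Summing from $N$ to $M$ telescopes the energy terms and leaves $\sum_{n=N}^{M}\mu_n\|x_{n+1}-x_n\|^2$ bounded by the finite difference of energies; letting $M\To+\infty$ and using $\inf_n\mu_n>0$ yields $\sum_{n\ge 1}\|x_n-x_{n-1}\|^2<+\infty$, which is (iii). The crux throughout is the algebraic identification of $A_{n-1}$ and $C_{n-1}$ from the expansion and the verification that the limiting coefficient of $\|x_{n+1}-x_n\|^2$ leaves a strictly positive summable residual, and this is where the constraint $s<\frac{2(1-\b)}{L_g}$ is essential.
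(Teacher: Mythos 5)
Your overall architecture (one-step energy inequality with a positive residual, asymptotic positivity of the coefficients, then telescoping) is the same as the paper's, and your limit computations for $A_{n-1}$, $C_{n-1}$, $\d_n$ and the role of $s<\frac{2(1-\b)}{L_g}$ are correct. But there is a genuine gap in the central computation. You propose to reach the energy inequality by applying the descent lemma twice: once from $y_n$ to $x_{n+1}$, and then again from $x_{n+1}$ to $y_{n+1}$. The second application produces the term $\<\n g(x_{n+1}),y_{n+1}-x_{n+1}\>=\frac{\b(n+1)}{n+1+\a}\<\n g(x_{n+1}),x_{n+1}-x_n\>$, and $\n g(x_{n+1})$ is never evaluated by the scheme and cannot be written in terms of the iterate gaps. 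You would have to estimate it via Lipschitz continuity against $\n g(y_n)$ or $\n g(y_{n+1})$ (the latter drags in $x_{n+2}$), and either way the resulting coefficients would not be the $A_{n-1}$ and $C_{n-1}$ specified in the statement, whose exact values are needed later (e.g.\ $\d_n$ enters the definition of the auxiliary sequence $u_n$ and of $H(z_n)$). The paper avoids this entirely by applying the descent lemma \emph{once}, directly between $y_n$ and $y_{n+1}$: since $\n g(y_n)=\frac1s(y_n-x_{n+1})$ is exactly known from \eqref{generaldiscrete}, the inequality
\begin{equation*}
g(y_{n+1})-\frac{L_g}{2}\|y_{n+1}-y_n\|^2\le g(y_n)+\frac1s\<y_n-x_{n+1},y_{n+1}-y_n\>
\end{equation*}
involves only quantities expressible through $x_{n+1}-x_n$ and $x_n-x_{n-1}$, and the coefficients $A_n$, $B_n$, $C_n$ drop out of the expansion of $\|y_{n+1}-y_n\|^2$ and $\<x_{n+1}-x_n,y_{n+1}-y_n\>$ by pure algebra.

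A secondary, less serious point: you propose to absorb the cross term $\<x_{n+1}-x_n,x_n-x_{n-1}\>$ by Cauchy--Schwarz or Young. The paper instead uses the exact identity $-2\<x_{n+1}-x_n,x_n-x_{n-1}\>=\|x_{n+1}+x_{n-1}-2x_n\|^2-\|x_{n+1}-x_n\|^2-\|x_n-x_{n-1}\|^2$, which yields the same coefficients as equal-weight Young but retains the nonnegative term $C_n\|x_{n+1}+x_{n-1}-2x_n\|^2$ in \eqref{e6}; since that term is discarded anyway for (i)--(iii), this difference is harmless here. Your arguments for (i), (ii) and (iii), granted the one-step inequality with residual $\Delta_n\|x_n-x_{n-1}\|^2$ and $\lim_{n\To+\infty}\Delta_n=\frac{2-sL_g-2\b}{2s}>0$, coincide with the paper's. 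The fix, then, is to replace your two-step descent argument by the single application of Lemma \ref{desc} between $y_n$ and $y_{n+1}$.
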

Due to the technical details of the proof of Lemma \ref{decreasing}, we will first present a sketch
of it in order to give a better insight.

1. We start from \eqref{generaldiscrete} and \eqref{desc} to obtain
$$g(y_{n+1})-\frac{L_g}{2}\|y_{n+1}-y_n\|^2\le g(y_n)+ \frac{1}{s}\<y_n-x_{n+1},y_{n+1}-y_n\>,\mbox{ for all }n\in\N.$$
From here, by using  equalities only, we obtain the key inequality
$$\frac{\Delta_{n+1}}{2}\|x_{n+1}-x_n\|^2+\frac{\Delta_n}{2}\|x_n-x_{n-1}\|^2\le (g(y_n)+\d_n\|x_n-x_{n-1}\|^2)- (g(y_{n+1})+\d_{n+1}\|x_{n+1}-x_n\|^2),$$
for all $n\in\N,\,n\ge 1,$ where $\Delta_n=A_{n-1}-C_{n-1}+B_n-C_n.$ Further, we show that $C_n,\Delta_n$ and $\d_n$ are positive after an index $N.$

2. Now, the key inequality emphasized at 1. implies at once (i), and if we assume that $g$ is bounded from below also (ii) and (iii) follows in a straightforward way.

We now pass to a detailed presentation of this proof.
\begin{proof}

From \eqref{generaldiscrete} we have $\n g(y_n)=\frac{1}{s}(y_n-x_{n+1})$, hence
$$\<\n g(y_n),y_{n+1}-y_n\>=\frac{1}{s}\<y_n-x_{n+1},y_{n+1}-y_n\>,\mbox{ for all }n\in\N.$$
Now, from \eqref{desc} we obtain
$$g(y_{n+1})\le g(y_n)+\<\n g(y_n),y_{n+1}-y_n\>+\frac{L_g}{2}\|y_{n+1}-y_n\|^2,$$
consequently we have
\begin{equation}\label{e1}
g(y_{n+1})-\frac{L_g}{2}\|y_{n+1}-y_n\|^2\le g(y_n)+ \frac{1}{s}\<y_n-x_{n+1},y_{n+1}-y_n\>,\mbox{ for all }n\in\N.
\end{equation}

Further, for all $n\in\N$ one has
$$\<y_n-x_{n+1},y_{n+1}-y_n\>=-\|y_{n+1}-y_n\|^2+\<y_{n+1}-x_{n+1},y_{n+1}-y_n\>,$$
 and
$$y_{n+1}-x_{n+1}=\frac{\b(n+1)}{n+\a+1}(x_{n+1}-x_n),$$
hence,
\begin{equation}\label{e2}
g(y_{n+1})+\left(\frac{1}{s}-\frac{L_g}{2}\right)\|y_{n+1}-y_n\|^2\le g(y_n)+ \frac{\frac{\b(n+1)}{n+\a+1}}{s}\<x_{n+1}-x_n,y_{n+1}-y_n\>.
\end{equation}

Since $$y_{n+1}-y_n=\frac{(1+\b)n+\a+\b+1}{n+\a+1}(x_{n+1}-x_n)-\frac{\b n}{n+\a}(x_n-x_{n-1}),$$
we have,
\begin{align}\nonumber\|y_{n+1}-y_n\|^2&=\left\|\frac{(1+\b)n+\a+\b+1}{n+\a+1}(x_{n+1}-x_n)-\frac{\b n}{n+\a}(x_n-x_{n-1})\right\|^2\\
\nonumber&=\left(\frac{(1+\b)n+\a+\b+1}{n+\a+1}\right)^2\|x_{n+1}-x_n\|^2+\left(\frac{\b n}{n+\a}\right)^2\|x_n-x_{n-1}\|^2\\
\nonumber&-2\frac{(1+\b)n+\a+\b+1}{n+\a+1}\frac{\b n}{n+\a}\<x_{n+1}-x_n,x_n-x_{n-1}\>,
\end{align}
and
\begin{align}\nonumber\<x_{n+1}-x_n,y_{n+1}-y_n\>&=\left\<x_{n+1}-x_n,\frac{(1+\b)n+\a+\b+1}{n+\a+1}(x_{n+1}-x_n)-\frac{\b n}{n+\a}(x_n-x_{n-1})\right\>\\
\nonumber&=\frac{(1+\b)n+\a+\b+1}{n+\a+1}\|x_{n+1}-x_n\|^2-\frac{\b n}{n+\a}\<x_{n+1}-x_n,x_n-x_{n-1}\>,
\end{align}
for all $n\in\N.$

Replacing the above equalities in \eqref{e2}, we obtain

\begin{align}\nonumber& g(y_{n+1})+\left(\frac{2-s L_g}{2s}\left(\frac{(1+\b)n+\a+\b+1}{n+\a+1}\right)^2-\frac{\b(n+1)((1+\b)n+\a+\b+1)}{s(n+\a+1)^2}\right)\|x_{n+1}-x_n\|^2\le\\
\nonumber&g(y_n)-\frac{2-s L_g}{2s}\left(\frac{\b n}{n+\a}\right)^2\|x_{n}-x_{n-1}\|^2+\\
\nonumber& \left(\frac{2-s L_g}{s}\frac{\b n}{n+\a}\frac{(1+\b)n+\a+\b+1}{n+\a+1}-\frac{1}{s}\frac{\b n}{n+\a}\frac{\b(n+1)}{n+\a+1}\right)\<x_{n+1}-x_n,x_n-x_{n-1}\>,
\end{align}
for all $n\in\N.$

 Hence, for every $n\in\N,\,n\ge1$, we have
 $$g(y_{n+1})+A_{n}\|x_{n+1}-x_n\|^2-2C_n\<x_{n+1}-x_n,x_n-x_{n-1}\>\le g(y_n)-B_n\|x_{n}-x_{n-1}\|^2.$$

By using the equality
\begin{equation}\label{e5}
-2\<x_{n+1}-x_n,x_n-x_{n-1}\>=\|x_{n+1}+x_{n-1}-2x_n\|^2-\|x_{n+1}-x_n\|^2-\|x_n-x_{n-1}\|^2
\end{equation}
we obtain
$$g(y_{n+1})+(A_n-C_n)\|x_{n+1}-x_n\|^2+C_n\|x_{n+1}+x_{n-1}-2x_n\|^2\le g(y_n)+(C_n-B_n)\|x_n-x_{n-1}\|^2,$$
for all $n\in\N,\,n\ge 1.$

Note that $\d_n=\frac12(A_{n-1}-C_{n-1}-B_n+C_n)$, hence we have
\begin{align}\nonumber& g(y_{n+1})+\d_{n+1}\|x_{n+1}-x_n\|^2+(A_n-C_n-\d_{n+1})\|x_{n+1}-x_n\|^2+C_n\|x_{n+1}+x_{n-1}-2x_n\|^2\le\\
\nonumber&g(y_n)+\d_n\|x_n-x_{n-1}\|^2+(C_n-B_n-\d_n)\|x_n-x_{n-1}\|^2,
\end{align}
for all $n\in\N,\, n\ge 1.$

Let us denote $\Delta_n=A_{n-1}-C_{n-1}+B_{n}-C_{n}$ for all $n\ge 1.$ Then, $$A_n-C_n-\d_{n+1}=\frac12(A_n-C_n+B_{n+1}-C_{n+1})=\frac{\Delta_{n+1}}{2}$$ and
$$C_n-B_n-\d_n=-\frac{\Delta_n}{2},$$
 for all $n\ge 1,$ consequently the following inequality holds.
\begin{align}\label{e6}
& C_n\|x_{n+1}+x_{n-1}-2x_n\|^2+\frac{\Delta_n}{2}\|x_n-x_{n-1}\|^2+\frac{\Delta_{n+1}}{2}\|x_{n+1}-x_n\|^2\le\\
\nonumber& (g(y_n)+\d_n\|x_n-x_{n-1}\|^2)- (g(y_{n+1})+\d_{n+1}\|x_{n+1}-x_n\|^2),
\end{align}
for all $n\in\N,\, n\ge 1.$

Since $0<\b<1$ and $s<\frac{2(1-\b)}{L_g}$  we have
\begin{align}
\nonumber&\lim_{n\To+\infty}A_n=\frac{(2-s L_g)(\b+1)^2-2\b-2\b^2}{2s}>0,\\
\nonumber&\lim_{n\To+\infty}B_n=\frac{(2-s L_g)\b^2}{2s}>0,\\
\nonumber&\lim_{n\To+\infty}C_n= \frac{(2-s L_g)(\b^2+\b)-\b^2}{2s} >0,\\
\nonumber&\lim_{n\To+\infty}\Delta_n=\frac{2-s L_g-2\b}{2s}>0, \mbox{ and}\\
\nonumber&\lim_{n\To+\infty}\d_n=\frac{2+2\b-2\b^2-s L_g(2\b+1)}{2s}>0.
\end{align}

Hence, there exists $N\in\N,\,N\ge 1$ and $C>0,\,D>0$ such that for all $n\ge N$ one has
$$C_n\ge C,\, \frac{\Delta_n}{2}\ge D\mbox{ and }\d_n>0$$ which, in the view of \eqref{e6}, shows (i), that is,  the sequence $g(y_n)+\d_n\|x_n-x_{n-1}\|^2$ is nonincreasing for $n\ge N.$

 By using \eqref{e6} again, we obtain
 \begin{align}\label{e61}
 0&\le C\|x_{n+1}+x_{n-1}-2x_n\|^2+D\|x_n-x_{n-1}\|^2+D\|x_{n+1}-x_n\|^2\\
 \nonumber&\le(g(y_n)+\d_n\|x_n-x_{n-1}\|^2)- (g(y_{n+1})+\d_{n+1}\|x_{n+1}-x_n\|^2),
 \end{align}
for all $n\ge N,$ or more convenient, that
 \begin{equation}\label{ineq}
0\le D\|x_{n+1}-x_{n}\|^2\le (g(y_n)+\d_n\|x_n-x_{n-1}\|^2)- (g(y_{n+1})+\d_{n+1}\|x_{n+1}-x_n\|^2),
\end{equation}
for all $n\ge N.$ Let $r>N.$ By summing up the latter relation from $n=N$ to $n=r$ we get
$$
D\sum_{n=N}^r\|x_{n+1}-x_{n}\|^2\le (g(y_N)+\d_N\|x_N-x_{N-1}\|^2)-(g(y_{r+1})+\d_{r+1}\|x_{r+1}-x_r\|^2)
$$
which leads to
\begin{equation}\label{forcoercive}
g(y_{r+1})+D\sum_{n=N}^r\|x_{n+1}-x_{n}\|^2\le g(y_N)+\d_N\|x_N-x_{N-1}\|^2.
\end{equation}
Now,  if we assume that $g$ is bounded from below, by letting $r\To+\infty$  we obtain
$$\sum_{n=N}^{\infty}\|x_{n+1}-x_{n}\|^2<+\infty$$ which proves (iii).

The latter relation also shows that
$$\lim_{n\To+\infty}\|x_n-x_{n-1}\|^2=0,$$
hence $$\lim_{n\To+\infty}\d_n\|x_n-x_{n-1}\|^2=0.$$
But then, by using the  assumption that the function $g$ is bounded from below we obtain that the sequence $(g(y_n)+\d_n\|x_n-x_{n-1}\|^2)_{n\in\N}$ is bounded from below. On the other hand, from (i) we have that  the sequence $(g(y_n)+\d_n\|x_n-x_{n-1}\|^2)_{n\ge N}$ is nonincreasing, hence there exists
$$\lim_{n\To+\infty} g(y_n)+\d_n\|x_n-x_{n-1}\|^2\in\R.$$
\end{proof}

\begin{remark}\label{r1} Observe that conclusion (iii) in Lemma \ref{decreasing} assures that the sequence  $(x_n-x_{n-1})_{n\in\N}\in l^2,$ in particular that
\begin{equation}\label{e7}
\lim_{n\To+\infty}(x_n-x_{n-1})=0.
\end{equation}
Note that according the proof of  Lemma \ref{decreasing}, one has $\d_n\|x_n-x_{n-1}\|^2\To 0,\,n\To+\infty.$ Thus, (ii) assures that there exists the limit $\lim_{n\To+\infty} g(y_n)\in\R.$
\end{remark}

In what follows, in order to apply our abstract convergence result obtained at Theorem \ref{thabstrconv}, we introduce a function and a sequence that will play the role of the function $F$ and the sequence $(z_n)$ studied in the previous section. Consider  the sequence
$$u_n=\sqrt{2\d_n}(x_{n}-x_{n-1})+y_n\mbox{, for all }n\in\N,\,n\ge N$$ and the sequence $z_n=(y_{n+N},u_{n+N})$ for all $n\in\N,$
where $N$ and $\d_n$ were defined in Lemma \ref{decreasing}. Let us introduce the following notations:
 $$\tilde{x}_n=x_{n+N}\mbox{ and }\tilde{y}_n=y_{n+N},$$
$$\a_n=\frac{\b (n+N)}{n+N+\a}\mbox{ and }\b_n=\sqrt{2\d_{n+N}}+\frac{\b (n+N)}{n+N+\a},$$
  for all $n\in\N.$
  Then obviously the sequences $(\a_n)_{n\in\N}$ and $(\b_n)_{n\in\N}$ are bounded, (actually they are convergent), and for each $n\in\N$, the sequence $z_n$ has the form
  \begin{equation}\label{zform}
z_n=\left(\tilde{x}_{n}+\a_n(\tilde{x}_{n}-\tilde{x}_{n-1}),\tilde{x}_{n}+\b_n(\tilde{x}_{n}-\tilde{x}_{n-1})\right).
\end{equation}

Consider further the following regularization of $g$
$$H:\R^m\times\R^m\To\R,\,H(x,y)=g(x)+\frac12\|y-x\|^2.$$

Then, for every $n\in\N$ one has
$$H(z_n)=g(\tilde{y}_n)+\d_{n+N}\|\tilde{x}_n-\tilde{x}_{n-1}\|^2.$$

Now, \eqref{ineq} becomes
\begin{equation}\label{H1forH}
D\|\tilde{x}_{n+1}-\tilde{x}_{n}\|\le H(z_n)-H(z_{n+1}),\,\mbox{ for all }n\in\N,
\end{equation}
which is exactly our condition (H1) applied to the function $H$ and the sequences $(\tilde{x}_n)_{n\in\N}$ and $(z_n)_{n\in\N}.$

\begin{remark}\label{discussenergy}
We emphasize that $H$ is strongly related to the total energy of the continuous dynamical systems \eqref{ee11} and \eqref{eee11}, (for other works where a similar regularization has been used we refer to \cite{BCL1,BCL,OCBP}).
Indeed, the total energy of the systems \eqref{ee11} and  \eqref{eee11} (see \cite{att-c-p-r-math-pr2018,APR}), is given by
$E:[t_0,+\infty)\to\R,\,E(t)=g(x(t))+\frac12\|\dot{x}(t)\|^2.$
Then, the explicit discretization of $E$ (see \cite{APR}), leads to
$E_n=g(x_n)+\frac12\|x_n-x_{n-1}\|^2=H(x_n,x_{n-1})$
and this fact was thoroughly exploited in \cite{OCBP}.
 However, observe that $H(z_n)$ cannot be obtained via the usual implicit/explicit discretization of $E.$
Nevertheless, $H(z_n)$ can be obtained from a discretization of $E$ by using the method presented in \cite{att-c-p-r-math-pr2018} which suggest to discretize $E$ in the form
$$E_n=g(\mu_n)+\frac12\|\eta_n\|^2,$$
where $\mu_n$ and $\eta_n$ are linear combinations of $x_n$ and $x_{n-1}.$ In our case we take $\mu_n=\tilde{y}_n$ and $\eta_n=\sqrt{2\d_{n+N}}(\tilde{x}_n-\tilde{x}_{n-1})$ and we obtain
$$E_n=g(\tilde{y}_n)+\d_{n+N}\|\tilde{x}_n-\tilde{x}_{n-1}\|^2=H(z_n).$$

\end{remark}

The fact that $H$  and the sequences $(\tilde{x}_n)_{n\in\N}$ and $(z_n)_{n\in\N}$ are satisfying also condition (H2) is underlined in Lemma \ref{H2forH} (ii).

\begin{lemma}\label{H2forH} Consider the function $H$ and the sequences $(\tilde{x}_n)_{n\in\N}$ and $(z_n)_{n\in\N}$ defined above. Then, the following statements hold true.
\begin{itemize}
\item[(i)] $\crit(H)=\{(x,x)\in\R^m\times\R^m:x\in\crit(g)\}$;
\item[(ii)] There exists $b>0$ such that $\|\n H(z_n)\|\le b(\|\tilde{x}_{n+1}-\tilde{x}_n\|+\|\tilde{x}_{n}-\tilde{x}_{n-1}\|),$ for all $n\in\N$.
\end{itemize}
\end{lemma}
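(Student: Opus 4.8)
The plan is to prove the two statements of Lemma \ref{H2forH} separately, both by direct computation with the explicit form of $H$.

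For part (i), I would start by computing the gradient of $H(x,y)=g(x)+\frac12\|y-x\|^2$ componentwise. Differentiating in the first variable gives $\n_x H(x,y)=\n g(x)-(y-x)$, and differentiating in the second gives $\n_y H(x,y)=y-x$. A point $(x,y)$ is critical precisely when both components vanish. From $\n_y H=0$ we immediately get $y=x$; substituting this into $\n_x H=0$ yields $\n g(x)=0$, i.e. $x\in\crit g$. Conversely, if $x\in\crit g$ and $y=x$, both partial gradients vanish, so $(x,x)\in\crit H$. This gives the claimed set equality $\crit H=\{(x,x):x\in\crit g\}$ in one short computation.

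For part (ii), the plan is to bound $\|\n H(z_n)\|$ using the triangle inequality on the two components $\n_x H(z_n)$ and $\n_y H(z_n)$ evaluated at $z_n=(y_{n+N},u_{n+N})$, where $u_{n+N}-y_{n+N}=\sqrt{2\d_{n+N}}(\tilde{x}_n-\tilde{x}_{n-1})$ by definition of $u_n$. The second component is $\n_y H(z_n)=u_{n+N}-y_{n+N}=\sqrt{2\d_{n+N}}(\tilde x_n-\tilde x_{n-1})$, which is already a multiple of $\|\tilde x_n-\tilde x_{n-1}\|$. The first component is $\n_x H(z_n)=\n g(\tilde y_n)-(u_{n+N}-y_{n+N})=\n g(\tilde y_n)-\sqrt{2\d_{n+N}}(\tilde x_n-\tilde x_{n-1})$. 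Here I would exploit the algorithm's defining relation $\n g(y_n)=\frac1s(y_n-x_{n+1})$ coming from \eqref{generaldiscrete}, so that $\n g(\tilde y_n)=\frac1s(\tilde y_n-\tilde x_{n+1})$. Writing $\tilde y_n-\tilde x_{n+1}=(\tilde y_n-\tilde x_n)+(\tilde x_n-\tilde x_{n+1})$ and using $\tilde y_n-\tilde x_n=\a_n(\tilde x_n-\tilde x_{n-1})$ (the inertial step, with $\a_n=\frac{\b(n+N)}{n+N+\a}$ bounded by $\b<1$) lets me express $\n g(\tilde y_n)$ as a linear combination of $\tilde x_{n+1}-\tilde x_n$ and $\tilde x_n-\tilde x_{n-1}$ with bounded coefficients.

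Assembling these pieces, each component of $\n H(z_n)$ is bounded by a constant multiple of $\|\tilde x_{n+1}-\tilde x_n\|+\|\tilde x_n-\tilde x_{n-1}\|$. The key point making the constants uniform is that the sequences $\a_n$, $\sqrt{2\d_{n+N}}$ and $\frac1s$ are all bounded (indeed convergent, as established in Lemma \ref{decreasing}), so the supremum of the coefficients over $n\in\N$ is finite; taking $b$ to be the resulting finite constant yields (H2). I expect the only mild obstacle to be bookkeeping: carefully tracking the index shift by $N$ and verifying that every coefficient appearing (the damping ratios and the $\sqrt{2\d_{n+N}}$ factors) is genuinely bounded, which follows from the limit computations already carried out in the proof of Lemma \ref{decreasing}. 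No delicate estimate is needed beyond the triangle inequality and boundedness of these convergent sequences.
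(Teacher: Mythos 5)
Your proposal is correct and follows essentially the same route as the paper: compute $\n H(x,y)=(\n g(x)+x-y,\,y-x)$ directly for (i), and for (ii) substitute the algorithm relation $\n g(y_n)=\frac1s(y_n-x_{n+1})$, decompose into the consecutive differences $\tilde x_{n+1}-\tilde x_n$ and $\tilde x_n-\tilde x_{n-1}$, and take the supremum of the bounded (convergent) coefficient sequences to obtain $b$. The only cosmetic difference is that the paper bounds $\|\n H(z_n)\|$ via $\sqrt{2\|\n g(y_n)\|^2+6\d_n\|x_n-x_{n-1}\|^2}$ before splitting, while you apply the triangle inequality componentwise from the start; both yield the same estimate.
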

\begin{proof}
For (i) observe that  $\n H(x,y)=(\n g(x)+x-y,y-x)$, hence, $\n H(x,y)=(0,0)$ leads to
$x=y$ and $\n g(x)=0.$
Consequently
$$\crit(H)=\{(x,x)\in\R^m\times\R^m: x\in\crit(g)\}.$$

(ii) By using  \eqref{generaldiscrete}, for every $n\in \N,\,n\ge N$ we have
\begin{align}\nonumber\|\n H(y_n,u_n)\|&=\sqrt{\|\n g(y_n)+y_n-u_n\|^2+\|u_n-y_n\|^2}\le\sqrt{2\|\n g(y_n)\|^2+2\|y_n-u_n\|^2+\|u_n-y_n\|^2}\\
\nonumber&=\sqrt{2\|\n g(y_n)\|^2+6\d_n\|x_n-x_{n-1}\|^2}\le\sqrt{2}\|\n g(y_n)\|+\sqrt{6\d_n}\|x_{n}-x_{n-1}\|\\
\nonumber&=\frac{\sqrt{2}}{s}\left\|\left(x_n+\frac{\b n}{n+\a}(x_n-x_{n-1})\right)-x_{n+1}\right\|+\sqrt{6\d_n}\|x_{n}-x_{n-1}\|\\
\nonumber&\le\frac{\sqrt{2}}{s}\|x_{n+1}-x_n\|+\left(\frac{\sqrt{2}\b  n}{s(n+\a)}+\sqrt{6\d_n}\right)\|x_{n}-x_{n-1}\|.
\end{align}
Let $b=\max\left\{\frac{\sqrt{2}}{s},\sup_{n\ge N}\left(\frac{\sqrt{2}\b  n}{s(n+\a)}+\sqrt{6\d_n}\right)\right\}.$
Then, obviously $b>0$ and for all $n\in\N$ it holds
$$\|\n H(z_n)\|\le b(\|\tilde{x}_{n+1}-\tilde{x}_n\|+\|\tilde{x}_{n}-\tilde{x}_{n-1}\|).$$
\end{proof}

\begin{remark} Till now we did not take any advantage from the conclusions (ii) and (iii) of Lemma \ref{decreasing}. In the next result we show that under the assumption that $g$ is bounded from below, the limit sets $\omega((x_n)_{n\in\N})$ and $\omega((z_n)_{n\in\N})$ are strongly connected. This connection is due to the fact that in case $g$ is bounded from below then \eqref{e7} holds, that is, one has $\lim_{n\To+\infty}(x_n-x_{n-1})=0$.

 Moreover, we emphasize some useful properties of the regularization $H$ which occur  when we assume that $g$ is bounded from below.
\end{remark}
In the following result we use the distance function to a set, defined for $A\subseteq\R^m$ as $$\dist(x,A)=\inf_{y\in A}\|x-y\|\mbox{ for all }x\in\R^m.$$

\begin{lemma}\label{regularization} In the settings of problem \eqref{opt-pb}, for some starting points $x_0=x_{-1}\in\R^m,$ consider the sequences $(x_n)_{n\in\N},\,(y_n)_{n\in\N}$ generated by Algorithm \eqref{generaldiscrete}. Assume that $g$ is bounded from below. Then, the following statements hold true.
\begin{itemize}
\item[(i)] $\omega((u_n)_{n\in\N})=\omega((y_n)_{n\in\N})=\omega((x_n)_{n\in\N})\subseteq\crit(g)$, further $\omega((z_n)_{n\in\N})\subseteq\crit(H)$ and $\omega((z_n)_{n\in\N})=\{(\ol x,\ol x)\in\R^m\times\R^m: \ol x\in\omega((x_n)_{n\in\N})\}$;
\item[(ii)] $(H(z_n))_{n\in\N}$ is convergent and $H$ is constant on $\omega((z_n)_{n\in\N})$;
\item[(iii)]  $\|\n H(y_n,u_n)\|^2\le \frac{2}{s^2}\|x_{n+1}-x_n\|^2+2\left(\left(\frac{\b n}{s(n+\a)}-\sqrt{2\d_n}\right)^2+\d_n\right)\|x_{n}-x_{n-1}\|^2$ for all $n\in\N,\,n\ge N$.
\end{itemize}

Assume that $(x_n)_{n\in\N}$ is bounded. Then,
\begin{itemize}
\item[(iv)] $\omega((z_n)_{n\in\N})$ is nonempty and compact;
\item[(v)] $\lim_{n\To+\infty} \dist(z_n,\omega((z_n)_{n\in\N}))=0.$
\end{itemize}
\end{lemma}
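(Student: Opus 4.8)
The plan is to obtain (iii) by one direct computation and then read off the remaining items from the decay and summability already secured in Lemma \ref{decreasing}, together with the explicit form of $\n H$ supplied by Lemma \ref{H2forH}. First I would prove (iii). From \eqref{generaldiscrete} one has $\n g(y_n)=\frac1s(y_n-x_{n+1})=\frac1s\left(-(x_{n+1}-x_n)+\frac{\b n}{n+\a}(x_n-x_{n-1})\right)$, while by definition $u_n-y_n=\sqrt{2\d_n}(x_n-x_{n-1})$. Since $\n H(x,y)=(\n g(x)+x-y,\,y-x)$, the first block of $\n H(y_n,u_n)$ is $\n g(y_n)+y_n-u_n=-\frac1s(x_{n+1}-x_n)+\left(\frac{\b n}{s(n+\a)}-\sqrt{2\d_n}\right)(x_n-x_{n-1})$ and the second block is $u_n-y_n$. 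Applying $\|a+b\|^2\le 2\|a\|^2+2\|b\|^2$ to the first block and using $\|u_n-y_n\|^2=2\d_n\|x_n-x_{n-1}\|^2$ for the second immediately yields the stated bound; this is the only genuine calculation in the lemma.

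Next I would exploit that, since $g$ is bounded from below, Lemma \ref{decreasing}(iii) and hence \eqref{e7} are available, so $x_n-x_{n-1}\To 0$. Because $\frac{\b n}{n+\a}$ and $\sqrt{2\d_n}$ are bounded (indeed convergent), this forces $y_n-x_n\To 0$ and $u_n-y_n\To 0$; thus $(x_n)$, $(y_n)$ and $(u_n)$ differ pairwise by null sequences and therefore share the same cluster set, giving the equality chain in (i). To see $\omega((x_n))\subseteq\crit g$, observe that $\n g(y_n)=\frac1s\left(-(x_{n+1}-x_n)+\frac{\b n}{n+\a}(x_n-x_{n-1})\right)\To 0$; passing to a convergent subsequence $y_{n_k}\To\ol x$ and using continuity of $\n g$ gives $\n g(\ol x)=0$. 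As $z_n=(y_{n+N},u_{n+N})$ and a finite shift leaves the cluster set unchanged, $u_n-y_n\To 0$ shows every cluster point of $(z_n)$ is diagonal, so $\omega((z_n))=\{(\ol x,\ol x):\ol x\in\omega((x_n))\}$, and by Lemma \ref{H2forH}(i) this set lies in $\crit H$. For (ii), note $H(z_n)=g(y_{n+N})+\d_{n+N}\|x_{n+N}-x_{n+N-1}\|^2$ is a shift of the sequence proved convergent in Lemma \ref{decreasing}(ii); continuity of $H$ then forces $H$ to equal the constant value $\lim_n H(z_n)$ at every cluster point, i.e. on $\omega((z_n))$.

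Finally, under the additional hypothesis that $(x_n)$ is bounded, boundedness of $(x_n-x_{n-1})$ together with the bounded coefficients gives boundedness of $(y_n)$ and $(u_n)$, hence of $(z_n)$; Bolzano--Weierstrass then makes $\omega((z_n))$ nonempty, and the representation $\omega((z_n))=\bigcap_{n}\ol{\{z_k:k\ge n\}}$ exhibits it as a closed and bounded, hence compact, set, proving (iv). For (v) I would argue by contradiction: if $\dist(z_n,\omega((z_n)))\not\To 0$, then some subsequence stays at distance $\ge\e$ from $\omega((z_n))$, yet being bounded it has a convergent sub-subsequence whose limit, by definition, lies in $\omega((z_n))$, contradicting the lower bound in the limit. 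I expect no serious analytic obstacle: the single substantive computation is (iii), and the rest is bookkeeping — the delicate points are merely keeping the index shift by $N$ consistent and justifying carefully both the ``equal cluster sets under a null perturbation'' step and the diagonalization of $\omega((z_n))$.
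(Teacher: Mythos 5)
Your proposal is correct and follows essentially the same route as the paper: the direct computation of $\|\n H(y_n,u_n)\|^2$ for (iii), the ``null perturbation preserves cluster sets'' argument plus continuity of $\n g$ for (i), the invocation of Lemma \ref{decreasing}(ii) for (ii), and boundedness plus Bolzano--Weierstrass for (iv)--(v). The only (harmless) divergences are in (iv), where you get closedness from the representation $\omega((z_n))=\bigcap_{n}\cl\{z_k:k\ge n\}$ while the paper runs an explicit diagonal extraction over $\omega((x_n))$, and in (v), where your contradiction argument via compactness is actually spelled out more carefully than the paper's one-line justification.
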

\begin{proof}
(i) Let $\ol x\in\omega((x_n)_{n\in\N}).$ Then, there exists a subsequence $(x_{n_k})_{k\in \N}$ of $(x_n)_{n\in\N}$ such that
$$\lim_{k\to+\infty}x_{n_k}=\ol x.$$
Since by \eqref{e7} $\lim_{n\To+\infty}(x_n-x_{n-1})=0$ and the sequences $(\sqrt{2\d_n})_{n\in\N},\,\left(\frac{\b n}{n+\a}\right)_{n\in\N}$ converge, we obtain that
$$\lim_{k\to+\infty}y_{n_k}=\lim_{k\to+\infty}u_{n_k}=\lim_{k\to+\infty}x_{n_k}=\ol x,$$
which shows that $$\omega((x_n)_{n\in\N})\subseteq \omega((u_n)_{n\in\N})\mbox{ and }\omega((x_n)_{n\in\N})\subseteq \omega((y_n)_{n\in\N}).$$
Further, from \eqref{generaldiscrete}, the continuity of $\n g$ and \eqref{e7}, we obtain that
\begin{align}\nonumber\n g(\ol x)&=\lim_{k\To +\infty} \n g(y_{n_k})=\frac{1}{s}\lim_{k\To +\infty}(y_{n_k}-x_{n_k+1})\\
\nonumber&=\frac{1}{s}\lim_{k\To +\infty}\left[(x_{n_k}-x_{n_k+1})+\frac{\b n_k}{n_k+\a}(x_{n_k}-x_{n_k-1})\right]=0.
\end{align}
Hence, $\omega((x_n)_{n\in\N})\subseteq\crit(g).$ Conversely, if $\ol y\in \omega((y_n)_{n\in\N})$ then, from \eqref{e7} results that $\ol y\in \omega((x_n)_{n\in\N}).$ Further, if $\ol u\in \omega((u_n)_{n\in\N})$ then by using \eqref{e7} again we obtain that  $\ol u\in \omega((y_n)_{n\in\N}).$
Hence,
$$\omega((y_n)_{n\in\N})=\omega((u_n)_{n\in\N})=\omega((x_n)_{n\in\N})\subseteq\crit(g).$$

Obviously $\omega((\tilde{x}_n)_{n\in\N})=\omega((x_n)_{n\in\N})$ and since the sequences $(\a_n)_{n\in\N},\,(\b_n)_{n\in\N}$ are bounded, (convergent), from \eqref{e7} one gets
\begin{equation}\label{rege1}
\lim_{n\To+\infty}\a_n(\tilde{x}_{n}-\tilde{x}_{n-1})=\lim_{n\To+\infty}\b_n(\tilde{x}_{n}-\tilde{x}_{n-1})=0.
\end{equation}
Let $(\ol x,\ol y)\in \omega((z_n)_{n\in\N}).$ Then, there exists a subsequence $(z_{n_k})_{k\in\N}$ such that
$z_{n_k}\To(\ol x,\ol y),\,k\To+\infty.$
But we have
$z_n=\left(\tilde{x}_{n}+\a_n(\tilde{x}_{n}-\tilde{x}_{n-1}),\tilde{x}_{n}+\b_n(\tilde{x}_{n}-\tilde{x}_{n-1})\right),$ for all $n\in\N$, consequently from \eqref{rege1} we obtain
$$\tilde{x}_{n_k}\To \ol x\mbox{ and }\tilde{x}_{n_k}\To \ol y,\,k\To+\infty.$$
Hence, $\ol x=\ol y$ and $\ol x\in \omega((x_n)_{n\in\N})$  which shows that
$$\omega((z_n)_{n\in\N})\subseteq\{(\ol x,\ol x)\in\R^m\times\R^m: \ol x\in\omega((x_n)_{n\in\N})\}.$$
Conversely, if $\ol x \in \omega((\tilde{x}_n)_{n\in\N})$ then there exists a subsequence $(\tilde{x}_{n_k})_{k\in \N}$ such that
$\lim_{k\to+\infty}\tilde{x}_{n_k}=\ol x.$ But then, by using \eqref{rege1} we obtain at once that
$z_{n_k}\To (\ol x,\ol x),\,k\To+\infty,$ hence by using the fact that $\omega((\tilde{x}_n)_{n\in\N})=\omega((x_n)_{n\in\N})$ we obtain
$$\{(\ol x,\ol x)\in\R^m\times\R^m: \ol x\in\omega((x_n)_{n\in\N})\}\subseteq \omega((z_n)_{n\in\N}).$$

Finally, from Lemma \ref{H2forH} (i) and since $\omega((x_n)_{n\in\N})\subseteq\crit(g)$   we have
$$\omega((z_n)_{n\in\N})\subseteq\{(\ol x,\ol x)\in\R^m\times\R^m: \ol x\in \crit(g)\}=\crit(H).$$

(ii) Follows directly by (ii) in Lemma \ref{decreasing}.

(iii) We have:
\begin{align}\nonumber\|\n H(y_n,u_n)\|^2&=\|(\n g(y_n)+y_n-u_n,u_n-y_n)\|^2=\|\n g(y_n)+y_n-u_n\|^2+\|u_n-y_n\|^2\\
\nonumber&=\left\|\frac1s(x_n-x_{n+1})+\left(\frac{\b n}{s(n+\a)}-\sqrt{2\d_n}\right)(x_n-x_{n-1})\right\|^2+2\d_n\|x_n-x_{n-1}\|^2\\
\nonumber&\le\frac{2}{s^2}\|x_{n+1}-x_n\|^2+2\left(\left(\frac{\b n}{s(n+\a)}-\sqrt{2\d_n}\right)^2+\d_n\right)\|x_{n}-x_{n-1}\|^2,
\end{align}
 for all $n\in\N,\,n\ge N.$

 Assume now that $(x_n)_{n\in\N}$ is bounded and let us prove (iv), (see also \cite{BCL-AA}). Obviously it follows that  $(z_n)_{n\in\N}$ is also bounded, hence according to Weierstrass Theorem $\omega((z_n)_{n\in\N}),$ (and also $\omega((x_n)_{n\in\N})$), is nonempty. It remains to show that $\omega((z_n)_{n\in\N})$ is closed. From (i)  we have
\begin{equation}\label{e8}
\omega((z_n)_{n\in\N})=\{(\ol x,\ol x)\in\R^m\times \R^m: \ol x\in \omega((x_n)_{n\in\N})\},
 \end{equation}
 hence it is enough to show that $\omega((x_n)_{n\in\N})$ is closed.

 Let be $(\ol x_p)_{p\in\N}\subseteq \omega((x_n)_{n\in\N})$ and assume that $\lim_{p\To+\infty}\ol x_p=x^*.$ We show that $x^*\in\omega((x_n)_{n\in\N}).$ Obviously, for every $p\in\N$ there exists a sequence of natural numbers $n_k^p\To+\infty,\,k\To+\infty$, such that
$$\lim_{k\To+\infty}x_{n_k^p}=\ol x_p.$$

Let be $\e >0$. Since $\lim_{p\To+\infty}\ol x_p=x^*,$  there exists $P(\e) \in \N$ such that for every $p\ge P(\e)$ it holds
$$\|\ol x_p-x^*\|<\frac\e2.$$
Let $p\in\N$ be fixed. Since $\lim_{k\To+\infty}x_{n_k^p}=\ol x_p,$ there exists $k(p,\e)\in\N$ such that for every $k\ge k(p,\e)$ it holds $$\|x_{n_k^p}-\ol x_p\|<\frac\e2.$$
Let be $k_p \geq k(p,\varepsilon)$ such that $n_{k_p}^p >p$. Obviously $n_{k_p}^p \To \infty$ as $p \To +\infty$ and for every $p \geq P(\e)$
$$\|x_{n_{k_p}^p}-x^*\|\le \|x_{n_{k_p}^p}-\ol x_p\|+\|\ol x_p-x^*\|<\e.$$
Hence
$\lim_{p\To+\infty}x_{n_{k_p}^p}=x^*,$
thus $x^*\in \omega((x_n)_{n\in\N}).$

(v) By using \eqref{e8} we have
$$\lim_{n\To+\infty} \dist(z_n,\omega((z_n)_{n\in\N}))=\lim_{n\To+\infty}\inf_{\ol x\in\omega((x_n)_{n\in\N})}\|z_n-(\ol x,\ol x)\|.$$
Since there exists the subsequence $(z_{n_k})_{k\in\N}$ such that
$\lim_{k\To\infty}z_{n_k}=(\ol x_0,\ol x_0)\in \omega((z_n)_{n\in\N})$ it is straightforward that
$$\lim_{n\To+\infty} \dist(z_n,\omega((z_n)_{n\in\N}))=0.$$
\end{proof}

Now we are ready to prove Theorem \ref{convergence} concerning the convergence of the sequences generated by the numerical scheme \eqref{generaldiscrete}.

\begin{proof}({\it Proof of Theorem \ref{convergence}.}) Let $(z_n)_{n\in\N}$ be the sequence defined by \eqref{zform}. Since $x^*\in \omega((x_n)_{n\in\N})$ according to Lemma \ref{regularization} (i) one has $x^*\in\crit(g)$ and  $z^*=(x^*,x^*)\in\omega((z_n)_{n\in\N}).$

It can easily be checked that the assumptions of Theorem \ref{thabstrconv} are satisfied with  the continuously Fr\'echet differentiable function $H$, the sequences $(z_n)_{n\in\N}$ and $(\tilde{x}_n)_{n\in\N}.$ Indeed, according to \eqref{H1forH} and Lemma \ref{H2forH} (ii) the conditions (H1) and (H2) from the hypotheses of Theorem \ref{thabstrconv} are satisfied. Hence, the sequence $(\tilde{x}_n)_{n\in\N}$ converges to $x^*$ as $n\To+\infty$. But then obviously the sequence $({x}_n)_{n\in\N}$ converges to $x^*$ as $n\To+\infty$.
\end{proof}

\begin{remark} Note that under the assumptions of Theorem \ref{convergence} we also have that
$$\lim_{n\To+\infty}y_n= x^*\mbox{ and }\lim_{n\To+\infty}g(x_n)=\lim_{n\To+\infty}g(y_n)=g(x^*).$$
\end{remark}

\begin{corollary}\label{sa} In the settings of problem \eqref{opt-pb}, for some starting points $x_0,x_{-1}\in\R^m,$ consider the sequence $(x_n)_{n\in\N}$ generated by Algorithm \eqref{generaldiscrete}. Assume that $g$ semi-algebraic and bounded from below. Assume further that $\omega((x_n)_{n\in\N})\neq\emptyset.$

 Then, the sequence $(x_n)_{n\in\N}$ converges to a critical point of the objective function $g.$
\end{corollary}
\begin{proof}
  Since the class of semi-algebraic functions is closed under addition (see for example \cite{b-sab-teb}) and
$(x,y) \mapsto \frac12\|x-y\|^2$ is semi-algebraic, we obtain that the the function
$$H:\R^m\times\R^m\To\R,\,H(x,y)=g(x)+\frac12\|y-x\|^2$$ is semi-algebraic. Consequently $H$ is a KL function. In particular $H$ has the Kurdyka-{\L}ojasiewicz property at a point $z^*=(x^*,x^*),$ where $x^*\in \omega((x_n)_{n\in\N}).$ The conclusion follows from Theorem \ref{convergence}.
\end{proof}

\begin{remark}\label{r2} In order to apply Theorem \ref{convergence} or Corollary \ref{sa} we need to assume that $\omega((x_n)_{n\in\N})$ is nonempty. Obviously, this condition is satisfied whenever the sequence $(x_n)_{n\in\N}$ is bounded. Next we show that the boundedness of $(x_n)_{n\in\N}$ is guaranteed if we assume that the objective function $g$ is coercive, that is,
$\lim_{\|x\|\rightarrow+\infty} g(x)=+\infty.$
\end{remark}
\begin{proposition}\label{xbounded} In the settings of problem \eqref{opt-pb}, for some starting points $x_0,x_{-1}\in\R^m,$ consider the sequence $(x_n)_{n\in\N}$ generated by Algorithm \eqref{generaldiscrete}. Assume that the objective function $g$ is coercive.

Then, $g$ is bounded from below, and the sequence  $(x_n)_{n\in\N}$ is bounded.
\end{proposition}
\begin{proof}
Indeed, $g$ is bounded from below, being a continuous and coercive function (see \cite{rock-wets}). Note that according to \eqref{forcoercive} the sequence $D\sum_{n=N}^r\|x_{n+1}-x_{n}\|^2$ is  bounded. Consequently,  from \eqref{forcoercive} it follows that $y_{r+1}$ is contained in a lower level set of $g$, for every $r\ge N,$ ($N$ was defined in the hypothesis of Lemma \ref{decreasing}). But the lower level sets of $g$ are bounded since $g$ is coercive. Hence, $(y_n)_{n\in\N}$ is bounded and taking into account \eqref{e7}, it follows that $(x_n)_{n\in\N}$  is also bounded.
\end{proof}


An immediate consequence of Theorem \ref{convergence} and Proposition \ref{xbounded} is the following result.
\begin{corollary} Assume that $g$ is a coercive function. In the settings of problem \eqref{opt-pb}, for some starting points $x_0,x_{-1}\in\R^m,$ consider the sequence $(x_n)_{n\in\N}$ generated by Algorithm \eqref{generaldiscrete}.  Assume further that
$$H:\R^m\times\R^m\To\R,\,H(x,y)=g(x)+\frac12\|y-x\|^2$$ is a KL function.

 Then, the sequence $(x_n)_{n\in\N}$ converges to a critical point of the objective function $g.$
\end{corollary}


\section{Convergence rates via the {\L}ojasiewicz exponent}

In this section we will assume that the regularization function $H$, introduced in the previous section, satisfies the {\L}ojasiewicz property, which  corresponds to a particular choice of the desingularizing function $\varphi$ (see \cite{lojasiewicz1963, b-d-l2006, attouch-bolte2009,att-b-red-soub2010,b-d-l-m2010,Rchil}).

\begin{definition}\label{Lexpo}  Let $g:\R^m\To \R $ be a differentiable function. The function $g$ is said to fulfill the {\L}ojasiewicz property at a point $\ol x\in\crit(g)$ if there exist $K,\e>0$ and $\t\in[0,1)$ such that
$$|g(x)-g(\ol x)|^\t\le K\|\n g(x)\|\mbox{ for every }x\mbox{ fulfilling }\|x-\ol x\|<\e.$$
The number $K$ is called the {\L}ojasiewicz constant, meanwhile the number $\t$  is called the {\L}ojasiewicz exponent of $g$ at the critical  point $\ol x.$
\end{definition}
Note that the above definition corresponds to the case when in the KL property the desingularizing function $\varphi$ has the form $\varphi(t)=\frac{K}{1-\t} t^{1-\t}.$ For $\t =0$ we adopt the convention $0^0=0$, such that if $|g(x)-g(\ol{x})|^0=0$ then $g(x)=g(\ol{x}),$ (see \cite{attouch-bolte2009}).

In the following theorem we provide convergence rates for the sequences generated by \eqref{generaldiscrete}, but also for the objective function values in these sequences, in terms of the {\L}ojasiewicz exponent of the regularization $H$ (see also \cite{attouch-bolte2009,att-b-red-soub2010,ADR,b-d-l2006,GRV}).

More precisely we obtain finite convergence rates if the {\L}ojasiewicz exponent of $H$ is $0$, linear convergence rates if the {\L}ojasiewicz exponent of $H$ belongs to $\left(0,\frac12\right]$ and sublinear convergence rates if the {\L}ojasiewicz exponent of $H$ belongs to $\left(\frac12,1\right).$

\begin{theorem}\label{th-conv-rate}  In the settings of problem \eqref{opt-pb}, for some starting points $x_0,x_{-1}\in\R^m,$ consider the sequences $(x_n)_{n\in\N},\,(y_n)_{n\in\N}$ generated by Algorithm \eqref{generaldiscrete}. Assume that $g$ is bounded from below and consider the function
$$H:\R^m \times \R^m \To \R,\,H(x,y)=g(x)+\frac12\|x-y\|^2.$$
Let $(z_n)_{n\in\N}$ be the sequence defined by \eqref{zform} and assume that $\omega((z_n)_{n\in\N})$ is nonempty and that $H$ fulfills the {\L}ojasiewicz property  with  {\L}ojasiewicz constant $K$ and   {\L}ojasiewicz  exponent $\t\in\left[0,1\right)$  at a point $z^*=(x^*,x^*)\in \omega((z_n)_{n\in\N}).$
Then $\lim_{n\To+\infty}x_n=x^*\in\crit(g)$ and the following statements hold true:
\\
If $\t=0$ then
\begin{itemize}
\item[$(\emph{a}_0)$]  $(g(y_n))_{n\in\N},(g(x_n))_{n\in\N},(y_n)_{n\in\N}$ and $(x_n)_{n\in\N}$ converge in a finite number of steps;
\end{itemize}
If $\t\in\left(0,\frac12\right]$ then there exist $Q\in[0,1),$  $a_1,a_2,a_3,a_4>0$ and $\ol k\in\N$ such that
\begin{itemize}
\item[$(\emph{a}_1)$] $g(y_{n})-g(x^* )\le a_1 Q^n$ for  every $n\ge \ol k$,
\item[$(\emph{a}_2)$] $g(x_{n})-g(x^* )\le a_2 Q^n$ for  every $n\ge\ol k$,
\item[$(\emph{a}_3)$] $\|x_n-x^* \|\le a_3 Q^{\frac{n}{2}}$ for  every $n\ge\ol k$,
\item[$(\emph{a}_4)$] $\|y_n-x^* \|\le a_4 Q^{\frac{n}{2}}$ for all  $n\ge\ol k$;
\end{itemize}
If $\t\in\left(\frac12,1\right)$ then there exist  $b_1,b_2,b_3,b_4>0$ and $\ol k\in\N$ such that
\begin{itemize}
\item[$(\emph{b}_1)$] $g(y_n)-g(x^* )\le b_1 {n^{-\frac{1}{2\t-1}}},\mbox{ for all }n\ge\ol k$,
\item[$(\emph{b}_2)$] $g(x_n)-g(x^* )\le b_2 {n^{-\frac{1}{2\t-1}}},\mbox{ for all }n\ge\ol k$,
\item[$(\emph{b}_3)$] $\|x_{n}-x^* \|\le b_3 {n^{\frac{\t-1}{2\t-1}}},\mbox{ for all }n\ge\ol k$,
\item[$(\emph{b}_4)$] $\|y_n-x^* \|\le  b_4 {n^{\frac{\t-1}{2\t-1}}},\mbox{ for all }n\ge\ol k$.
\end{itemize}
\end{theorem}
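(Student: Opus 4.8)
The plan is to observe that the hypotheses here are exactly those of Theorem \ref{convergence}, so that the convergence $x_n\To x^*\in\crit(g)$ and $z_n\To z^*$ is free; what remains is to quantify the speed. The driving quantity will be the energy gap $E_n := H(z_n)-H(z^*) = g(\tilde y_n)+\d_{n+N}\|\tilde x_n-\tilde x_{n-1}\|^2-g(x^*)\ge 0$, which by \eqref{H1forH} is monotonically decreasing and, since $z_n\To z^*$ with $H$ continuous, tends to $0$. (If $E_n=0$ for some $n$ we are in the finite-termination regime and every estimate below is trivial, so I would assume $E_n>0$ throughout.) The aim of the first step is to feed $(E_n)$ into Lemma \ref{ratel}.

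First I would derive the recursive inequality. For $n$ large enough, $z_n$ lies in the ball where the {\L}ojasiewicz inequality holds, giving $E_n^\t\le K\|\n H(z_n)\|$; combining this with the relative-error bound (H2) from Lemma \ref{H2forH}(ii) and then with the descent estimate \eqref{H1forH} applied at indices $n$ and $n+1$ (via $\|\tilde x_n-\tilde x_{n-1}\|\le\sqrt{(E_n-E_{n+1})/D}$) yields, after $\sqrt{u}+\sqrt{v}\le\sqrt{2(u+v)}$ and squaring,
$$E_n-E_{n+2}\ge\frac{D}{2K^2b^2}\,E_n^{2\t}.$$
Using monotonicity to replace $E_n^{2\t}$ by $E_{n+2}^{2\t}$ and reindexing, this is precisely \eqref{eseq} with $l_0=2$, so Lemma \ref{ratel} delivers the three regimes for $E_n$: finite termination when $\t=0$, a geometric bound $E_n\le C_1Q^n$ when $\t\in(0,1/2]$, and the polynomial bound $E_n\le C_2(n-1)^{-1/(2\t-1)}$ when $\t\in(1/2,1)$.

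Next I would transfer these to the function values. Since $H(z^*)=g(x^*)$ and $\d_{n+N}\|\tilde x_n-\tilde x_{n-1}\|^2\ge 0$, one has $g(\tilde y_n)-g(x^*)\le E_n$, which after shifting the index by $N$ gives $(a_1)$ and $(b_1)$; the $\t=0$ case gives $(a_0)$ by forcing the gaps, and hence $(x_n),(y_n),g(x_n),g(y_n)$, to be eventually constant. For $g(x_n)$ I would invoke the descent Lemma \ref{desc} at the step $x_{n+1}=y_n-s\n g(y_n)$: because $s<2/L_g$ one gets $g(x_{n+1})\le g(y_n)-s(1-sL_g/2)\|\n g(y_n)\|^2\le g(y_n)$, so $g(x_{n+1})-g(x^*)\le g(y_n)-g(x^*)$, yielding $(a_2)$ and $(b_2)$ after a one-step shift.

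Finally, for the trajectory estimates I would sum the telescoping inequality \eqref{fontos} (valid for $(\tilde x_n),(z_n),H$ with $a=D$) from $k=n$ to $\infty$; using $\|\tilde x_{r+1}-\tilde x_r\|\To 0$ and $\varphi(E_{r+1})\To 0$, this collapses to
$$\|\tilde x_{n-1}-x^*\|\le\sum_{k\ge n}\|\tilde x_k-\tilde x_{k-1}\|\le\frac{9b}{4D}\,\varphi(E_n)=\frac{9bK}{4D(1-\t)}\,E_n^{1-\t}.$$
Inserting the rate for $E_n$ then controls $\|x_n-x^*\|$: in the geometric case $E_n^{1-\t}\lesssim Q^{n(1-\t)}\le CQ^{n/2}$ because $\t\le 1/2$, giving $(a_3)$; in the polynomial case $E_n^{1-\t}\lesssim n^{(\t-1)/(2\t-1)}$, giving $(b_3)$. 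Then $(a_4)$ and $(b_4)$ follow from $\|y_n-x^*\|\le\|x_n-x^*\|+\|x_n-x_{n-1}\|$, the last gap being of the same or smaller order. I expect the genuine obstacle to be exactly this trajectory estimate in the sublinear regime: the naive bound $\|x_n-x_{n-1}\|\le\sqrt{E_{n-N}/D}$ produces a tail series $\sum k^{-1/(2(2\t-1))}$ that diverges once $\t\ge 3/4$, so summing \eqref{fontos} through the desingularizing function $\varphi$ rather than through $\sqrt{E_n}$ is essential, together with careful bookkeeping of the index shift by $N$ relating $(x_n)$ to $(\tilde x_n,z_n)$.
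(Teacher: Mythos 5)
Your proposal is correct and follows essentially the same route as the paper: the same energy $\mathcal{E}_n=H(z_n)-H(z^*)$, the same two-step recursion $\mathcal{E}_n-\mathcal{E}_{n+2}\ge C_0\mathcal{E}_{n+2}^{2\t}$ fed into Lemma \ref{ratel} with $l_0=2$, and the same telescoping of \eqref{fontos} through the desingularizing function to get the trajectory rates (your closing remark about why $\varphi(\mathcal{E}_n)$ rather than $\sqrt{\mathcal{E}_n}$ must be summed is exactly the point). Your two local shortcuts are both valid and slightly cleaner than the paper's: you obtain the recursion directly from (H2) and $\sqrt{u}+\sqrt{v}\le\sqrt{2(u+v)}$ instead of going through Lemma \ref{regularization}(iii) and the auxiliary sequences $S_n,a_n,b_n$, and you get $(a_2)$, $(b_2)$ from the one-line monotonicity $g(x_{n+1})\le g(y_n)$ (descent lemma with $s<2/L_g$) instead of the paper's estimate \eqref{imagedifference} on $g(x_n)-g(y_n)$.
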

Due to the technical details of the proof of Theorem \ref{th-conv-rate}, we will first present a sketch
of it in order to give a better insight.

 1. After discussing a straightforward case, we introduce the discrete energy  $\mathcal{E}_n=H(z_n)-H(z^*)$  where $\mathcal{E}_n>0$ for all $n\in\N$, and we show that  Lemma 15 from \cite{BN} (see also  \cite{attouch-bolte2009}), 
 can be applied to $\mathcal{E}_n.$

 2. This immediately gives the desired convergence rates (a$_0$), (a$_1$) and (b$_1$).

 3. For proving  (a$_2$) and (b$_2$) we use the identity $g(x_n)-g(x^* )=(g(x_n)-g(y_n))+(g(y_n)-g(x^* ))$ and we derive an inequality between
 $g(x_n)-g(y_n))$ and $\mathcal{E}_n.$

 4. For (a$_3$) and (b$_3$) we use the equation \eqref{fontos} and the form of the desingularizing function $\varphi$.

 5. Finally, for proving (a$_4$) and (b$_4$) we use the results  already obtained at (a$_3$) and (b$_3$) and the form of the sequence $(y_n)_{n\in\N}.$

We now pass to a detailed presentation of this proof.
\begin{proof}
Obviously, according to Theorem \ref{convergence} one has $\lim_{n\To+\infty}x_n=x^*\in\crit(g)$, which combined with Lemma \ref{regularization} (i) furnishes  $\lim_{n\To+\infty}z_n=z^*\in\crit(H).$ We divide the proof into two cases.

{\bf Case I.} Assume that there exists $\ol n\in\N,$  such that $H(z_{\ol n})=H(z^*).$ Then, since $H(z_n)$ is decreasing for all $n\in\N$ and $\lim_{n\To+\infty}H(z_n)=H(z^*)$ we obtain that
$$H(z_n)=H(z^*)\mbox{ for all }n\ge\ol n.$$

The latter relation combined with \eqref{H1forH} leads to
$$0\le D\|\tilde{x}_{n+1}-\tilde{x}_{n}\|^2\le H(z_n)-H(z_{n+1})=H(z^*)-H(z^*)=0$$
for all $n\ge\ol n.$

Hence $(\tilde{x}_n)_{n\ge\ol n}$ is constant, in other words $x_n=x^*$ for all ${n\ge\ol n+N}$. Consequently $y_n=x^*$ for all ${n\ge\ol n+N+1}$ and the conclusion of the theorem is straightforward.

{\bf Case II.} In what follows we assume  that $H(z_{n})>H(z^*),$ for all $n\in\N.$

For simplicity let us denote $\mathcal{E}_n=H(z_n)-H(z^*)$  and observe that $\mathcal{E}_n>0$ for all $n\in\N.$
 From \eqref{e61} we have that the sequence $(\mathcal{E}_n)_{n\in\N}$ is nonincreasing, that is, there exists $D>0$ such  that
 \begin{equation}\label{nonincr}
D\|\tilde{x}_n-\tilde{x}_{n-1}\|^2\le \mathcal{E}_n-\mathcal{E}_{n+1},\mbox{ for all }n\in\N.
\end{equation}

Further, since $\lim_{n\To+\infty}z_n=z^*, $ one has
\begin{equation}\label{conv0}
\lim_{n\To+\infty}\mathcal{E}_n=\lim_{n\To+\infty}(H(z_n)-H(z^* ))=0.
\end{equation}

From Lemma \ref{regularization} (iii) we have
 \begin{equation}\label{gradh}
 \|\n H(z_n)\|^2\le \frac{2}{s^2}\|\tilde{x}_{n+1}-\tilde{x}_n\|^2+2\left(\left(\frac{\b (n+N)}{s(n+N+\a)}-\sqrt{2\d_{n+N}}\right)^2+\d_{n+N}\right)\|\tilde{x}_{n}-\tilde{x}_{n-1}\|^2,
 \end{equation} for all $n\in\N$.
  Let $S_n=2\left(\frac{\b (n+N)}{s(n+N+\a)}-\sqrt{2\d_{n+N}}\right)^2+\d_{n+N},$ for all $n\in\N.$

Combining \eqref{nonincr} and \eqref{gradh} it follows that, for all $n\in\N$ one has
\begin{align}\label{inter0}
\|\tilde{x}_n-\tilde{x}_{n-1}\|^2&\ge \frac{1}{S_n}\|\n H(z_n)\|^2-\frac{2}{s^2S_n}\|\tilde{x}_{n+1}-\tilde{x}_n\|^2\\
\nonumber&\ge \frac{1}{S_n}\|\n H(z_n)\|^2-\frac{2}{s^2 DS_n}(\mathcal{E}_{n+1}-\mathcal{E}_{n+2}).
\end{align}

Now by using the {\L}ojasiewicz property of $H$ at $z^*\in\crit(H)$, and the fact that $\lim_{n\To+\infty}z_n=z^*$,  we obtain that there exists  $\e>0$ and $\ol N_1\in\N,$ such that for all $n\ge \ol N_1$ one has
$$\|z_n-z^*\|<\e,$$
and
\begin{equation}\label{inter2}
\|\n H(z_n)\|^2\ge\frac{1}{K^2}|H(z_n)-H(z^* )|^{2\t}=\frac{1}{K^2}\mathcal{E}_n^{2\t}.
\end{equation}
Consequently \eqref{nonincr}, \eqref{inter0} and \eqref{inter2} leads to
\begin{align}\label{e111} \mathcal{E}_n-\mathcal{E}_{n+1}&\ge D\|\tilde{x}_n-\tilde{x}_{n-1}\|^2\ge \frac{D}{S_n}\|\n H(z_n)\|^2-\frac{2}{s^2S_n}(\mathcal{E}_{n+1}-\mathcal{E}_{n+2})\\
\nonumber&\ge \frac{D}{K^2 S_n}\mathcal{E}_n^{2\t}-\frac{2}{s^2S_n}(\mathcal{E}_{n+1}-\mathcal{E}_{n+2})=a_n \mathcal{E}_{n}^{2\t}-b_n(\mathcal{E}_{n+1}-\mathcal{E}_{n+2}),
\end{align}
for all $n\ge \ol N_1,$ where $a_n= \frac{D}{K^2 S_n}\mbox{ and  }b_n=\frac{2}{s^2S_n}.$

Since the sequence $(\mathcal{E}_n)_{n\in\N}$ is nonincreasing, one has
$$\mathcal{E}_n-\mathcal{E}_{n+2}\ge\mathcal{E}_n-\mathcal{E}_{n+1},$$
$$a_n \mathcal{E}_{n}^{2\t}\ge a_n \mathcal{E}_{n+2}^{2\t}$$
and
$$-b_n(\mathcal{E}_{n+1}-\mathcal{E}_{n+2})\ge-b_n(\mathcal{E}_{n}-\mathcal{E}_{n+2}),$$
thus, \eqref{e111} becomes
\begin{equation}\label{e1112}
\mathcal{E}_n-\mathcal{E}_{n+2}\ge \frac{a_n}{1+b_n}\mathcal{E}_{n+2}^{2\t},
\end{equation}
for all $n\ge \ol{N_1}.$

It is obvious that the sequences $(a_n)_{n\ge\ol N_1}$ and $(b_n)_{n\ge\ol N_1}$ are positive and convergent, further
$$\lim_{n\To+\infty} a_n>0\mbox{ and } \lim_{n\To+\infty} b_n>0,$$
 hence, there exists $\ol N_2\in\N,\, \ol N_2\ge \ol N_1,$ and $C_0>0$ such that
 $$\frac{a_n}{1+b_n}\ge C_0,\mbox{ for all }n\ge \ol N_2.$$

Consequently, \eqref{e1112} leads to
\begin{equation}\label{e1113}
\mathcal{E}_n-\mathcal{E}_{n+2}\ge C_0\mathcal{E}_{n+2}^{2\t},
\end{equation}
for all $n\ge \ol{N_2}.$

Now we can apply Lemma 15 \cite{BN} 
with $e_n=\mathcal{E}_{n+2},\,l_0=2$ and $n_0=\ol N_2.$
Hence, by taking into account that $\mathcal{E}_n>0$ for all $n\in\N$, that is, in the conclusion of Lemma 15 (ii) from \cite{BN} 
one has $Q\neq 0$,   we have:
\begin{itemize}
\item[(K0)]if $\t=0,$  then $(\mathcal{E}_n)_{n\ge N}$ converges in finite time;
\item[(K1)]  if $\t\in\left(0,\frac12\right]$, then there exists $C_1>0$ and $Q\in(0,1)$, such that for every $n\ge \ol N_2+2$
$$\mathcal{E}_n\le C_1 Q^n;$$
\item[(K2)]  if $\t\in\left[\frac12,1\right)$, then there exists $C_2>0$, such that for every $n\ge \ol N_2+4$
$$\mathcal{E}_n\le C_2(n-3)^{-\frac{1}{2\t-1}}.$$
\end{itemize}

{\bf The case $\t=0.$}

For proving (a$_0$) we use (K0). Since in this case $(\mathcal{E}_n)_{n\ge N}$ converges in finite time after an index $N_0\in\N$ we have $\mathcal{E}_n-\mathcal{E}_{n+1}=0$ for all $n\ge N_0$, hence \eqref{nonincr} implies that $\tilde{x}_n=\tilde{x}_{n-1}$ for all $n\ge N_0$. Consequently, $x_n=x_{n-1}$ and $y_n=x_n$ for all $n\ge N_0+N$, thus $(x_n)_{n\in\N},\, (y_n)_{n\in\N}$ converge in finite time which obviously implies that
$\left(g(x_n)\right)_{n\in\N},\,\left(g(y_n)\right)_{n\in\N}$ converge in finite time.
\vskip 0.3cm
{\bf The case $\t\in\left(0,\frac12\right].$}

We apply (K1) and we obtain that there exists $C_1>0$ and $Q\in(0,1)$, such that for every $n\ge \ol N_2+2$, one has $\mathcal{E}_n\le C_1 Q^n.$
But, $\mathcal{E}_n=g(\tilde{y}_n) - g(x^*) + \delta_{n+N} \| \tilde{x}_n - \tilde{x}_{n-1} \|^2$ for all $n\in\N,$ consequently
$g(y_{n+N}) - g(x^*)\le  C_1 Q^n,$ for all $n\ge\ol N_2+2.$
Thus, by denoting $\frac{C_1}{Q^N}=a_1$ we get
\begin{align}\label{ea1}
g(y_n) - g(x^*)\le  a_1 Q^n,\mbox{ for all }n\ge\ol N_2+N+2.
\end{align}
For (a$_2$) we start from  \eqref{desc} and Algorithm \eqref{generaldiscrete} and for all $n\in\N$ we have
\begin{align}\nonumber g(x_n)-g(y_n)&\le\<\n g(y_n),x_n-y_n\>+\frac{L_g}{2}\|x_n-y_n\|^2\\
\nonumber&=\frac1s\left\<(x_n-x_{n+1})+\frac{\b n}{n+\a}(x_n-x_{n-1}),-\frac{\b n}{n+\a}(x_n-x_{n-1})\right\>\\
\nonumber&+\frac{L_g}{2}\left(\frac{\b n}{n+\a}\right)^2\|x_n-x_{n-1}\|^2\\
\nonumber&=-\left(\frac{\b n}{n+\a}\right)^2\frac{2-sL_g}{2s}\|x_n-x_{n-1}\|^2+\frac1s\left\<x_{n+1}-x_n,\frac{\b n}{n+\a}(x_n-x_{n-1})\right\>.
\end{align}
By using the inequality $\<X,Y\>\le \frac12\left(a^2\|X\|^2+\frac{1}{a^2}\|Y\|^2\right)$ for all $X,Y\in\R^m, a\in \R\setminus\{0\}$, we obtain
$$\left\<x_{n+1}-x_n,\frac{\b n}{n+\a}(x_n-x_{n-1})\right\>\le \frac12\left(\frac{1}{2-sL_g}\|x_{n+1}-x_n\|^2+(2-sL_g)\left(\frac{\b n}{n+\a}\right)^2\|x_n-x_{n-1}\|^2\right),$$ consequently
\begin{equation}\label{imagedifference}
g(x_n)-g(y_n)\le\frac{1}{2s(2-sL_g)}\|x_{n+1}-x_n\|^2,\mbox{ for all }n\in\N.
\end{equation}

Taking into account that $\mathcal{E}_n>0$ for all $n\in\N,$ from \eqref{nonincr} we have
\begin{equation}\label{ee}
\|\tilde{x}_{n}-\tilde{x}_{n-1}\|^2\le\frac{1}{{D}}{\mathcal{E}_{n}}\mbox{ for all }n\in\N.
\end{equation}

Hence, for all $n\ge N-1$ one has
\begin{equation}\label{ee1}
g(x_n)-g(y_n)\le\frac{1}{2sD(2-sL_g)}\mathcal{E}_{n-N+1}.
\end{equation}

Now, the identity $g(x_n)-g(x^* )=(g(x_n)-g(y_n))+(g(y_n)-g(x^* ))$ and  \eqref{ea1} lead to
$$g(x_n)-g(x^* )\le \frac{1}{2sD(2-sL_g)}\mathcal{E}_{n-N+1}+a_1 Q^n$$
for every $n\ge\ol N_2+N+2$, which combined with (K1) gives
\begin{equation}\label{ea2}
g(x_n)-g(x^* )\le a_2 Q^n,
\end{equation}
for every $n\ge\ol N_2+N+2$, where $a_2=\frac{C_1}{2sD(2-sL_g)Q^{N-1}}+a_1.$

For (a$_3$) we will use \eqref{fontos}. Since $z_n\in B(z^*,\e)$ for all $n\ge \ol N_2$ and $z_n\To z^*,\, n\To+\infty$, we get that there exists $\ol N_3\in\N,\,\ol N_3\ge\ol N_2$ such that \eqref{fontos} holds for every $n\ge \ol N_3.$  In this setting, by taking into account that the desingularizing function is $\varphi(t)=\frac{K}{1-\t}t^{1-\t},$ the inequality \eqref{fontos} has the form
\begin{equation}\label{fontoska}
2\|\tilde{x}_{k+1}-\tilde{x}_{k}\|\le\|\tilde{x}_{k}-\tilde{x}_{k-1}\|+\frac{9b}{4 D}\cdot\frac{K}{1-\t}(\mathcal{E}_k^{1-\t}-\mathcal{E}_{k+1}^{1-\t}),
\end{equation}
where $D$ was defined at \eqref{H1forH} and $b$ was defined at Lemma \ref{H2forH} (ii).
Observe that  by summing up \eqref{fontoska} from $k=n\ge\ol N_3$ to $k=P>n$ and using the triangle inequality we obtain
\begin{align}
\nonumber\|\tilde{x}_{P+1}-\tilde{x}_{n}\|&\le \sum_{k={n}}^P \|\tilde{x}_{k+1}-\tilde{x}_{k}\|\\
\nonumber&\le \|\tilde{x}_{n}-\tilde{x}_{n-1}\|-\|\tilde{x}_{P+1}-\tilde{x}_P\|+\frac{9b}{4D}\cdot\frac{K}{1-\t}(\mathcal{E}_n^{1-\t}-\mathcal{E}_{P+1}^{1-\t}).
\end{align}

By  letting $P\To+\infty$ and taking into account that $\tilde{x}_P\To x^*,\, \mathcal{E}_{P+1}\To 0,\,P\To+\infty,$ further  using \eqref{ee} we get
\begin{equation}\label{fontoska1}
\|\tilde{x}_{n}-x^* \|\le \|\tilde{x}_{n}-\tilde{x}_{n-1}\|+\frac{9b}{4D}\cdot\frac{K}{1-\t}\mathcal{E}_n^{1-\t}\le\frac{1}{\sqrt{D}}\sqrt{\mathcal{E}_n}+ M_0\mathcal{E}_n^{1-\t},
\end{equation}
where $M_0=\frac{9bK}{4D(1-\t)}.$

But $(\mathcal{E}_n)_{n\in\N}$ is a decreasing sequence and according to \eqref{conv0} $(\mathcal{E}_n)_{n\in\N}$ converges to $0$, hence there exists $\ol N_4\ge \max\{\ol N_3,\ol N_2+2\}$ such that $0\le \mathcal{E}_{n}\le 1,$ for all $n\ge \ol N_4$. The latter relation combined with the fact that $\t\in\left(0,\frac12\right]$ leads to
$\mathcal{E}_{n}^{1-\t}\le \sqrt{\mathcal{E}_{n}},\mbox{ for all }n\ge \ol N_4.$ Consequently we have
$\|\tilde{x}_{n}-x^* \|\le M_1\sqrt{\mathcal{E}_{n}},$ for all $n\ge\ol N_4,$ where $M_1=\frac{1}{\sqrt{D}}+M_0.$
The conclusion follows via (K1) since  we have
$$\|x_{n+N}-x^* \|\le M_1\sqrt{C_1} Q^{\frac{n}{2}}=M_1\sqrt{\frac{C_1}{Q^{N}}}Q^{\frac{n+N}{2}},\mbox{ for every }n\ge\ol N_4$$
and consequently
\begin{equation}\label{ea3}
\|x_{n}-x^* \|\le a_3 Q^{\frac{n}{2}},
\end{equation}
for all $n\ge \ol N_4+N,$ where $a_3=M_1\sqrt{\frac{C_1}{Q^{N}}}.$

Finally, for $n\ge \ol N_4+N+1$ we have
\begin{align}\nonumber \|y_n-x^* \|&=\left\|x_n+\frac{\b n}{n+\a}(x_n-x_{n-1})-x^* \right\|\le\left(1+\frac{\b  n}{n+\a}\right)\|x_n-x^* \|+\frac{\b  n}{n+\a}\|x_{n-1}-x^* \|\\
\nonumber&\le\left(1+\frac{\b  n}{n+\a}\right)a_3Q^{\frac{n}{2}}+\frac{\b  n}{n+\a}a_3Q^{\frac{n-1}{2}}=\left(1+\frac{\b  n}{n+\a}+\frac{\b  n}{n+\a}\frac{1}{\sqrt{Q}}\right)a_3Q^{\frac{n}{2}}.
\end{align}
Let $a_4=\left(1+\b+\frac{\b}{\sqrt{Q}}\right)a_3.$ Then, for all $n\ge \ol N_4+N+1$ one has
\begin{equation}\label{ea4}
\|y_n-x^* \|\le a_4Q^{\frac{n}{2}}.
\end{equation}
Now, if we take  $\ol k= \ol N_4+N+1$ then \eqref{ea1}, \eqref{ea2}, \eqref{ea3} and \eqref{ea4} lead to  the conclusions (a$_1)$-(a$_4)$.
\vskip 0.3cm
{\bf The case $\t\in\left(\frac12,1\right).$}

 According to (K2) there exists $C_2>0$, such that for every $n\ge \ol N_2+4$ one has
$$\mathcal{E}_n\le C_2(n-3)^{-\frac{1}{2\t-1}}=C_2\left(\frac{n}{n-3}\right)^{\frac{1}{2\t-1}}n^{-\frac{1}{2\t-1}}.$$
Let $M_2=C_2\sup_{n\ge \ol N_2+4}\left(\frac{n}{n-3}\right)^{\frac{1}{2\t-1}}=C_2\left(\frac{\ol N_2+4}{\ol N_2+1}\right)^{\frac{1}{2\t-1}}.$
Then,
$\mathcal{E}_n\le M_2 n^{-\frac{1}{2\t-1}},$ for all $n\ge \ol N_2+4.$
But $\mathcal{E}_n=g(\tilde{y}_n) - g(x^*) + \delta_{n+N} \| \tilde{x}_n - \tilde{x}_{n-1} \|^2$,
hence $g(\tilde{y}_n) - g(x^*)\le M_2 n^{-\frac{1}{2\t-1}},$ for every $n\ge \ol N_2+4.$
Consequently, for every $n\ge \ol N_2+4$  we have
$$g({y}_{n+N}) - g(x^*)\le M_2\left(\frac{n+N}{n}\right)^{\frac{1}{2\t-1}}( n+N)^{-\frac{1}{2\t-1}}.$$
Let $b_1=M_2\left(\frac{N_2+4+N}{N_2+4}\right)^{\frac{1}{2\t-1}}=C_2\left(\frac{\ol N_2+4}{\ol N_2+1}\right)^{\frac{1}{2\t-1}}\left(\frac{N_2+4+N}{N_2+4}\right)^{\frac{1}{2\t-1}}=C_2\left(\frac{N_2+4+N}{\ol N_2+1}\right)^{\frac{1}{2\t-1}}.$
Then,
\begin{equation}\label{eb1}
g({y}_{n}) - g(x^*)\le b_1 n^{-\frac{1}{2\t-1}},\mbox{ for all }n\ge N_2+N+4.
\end{equation}

For (b$_2$) note that \eqref{ee1} holds for every  $n\ge \ol N_2+4,$ hence
$$
g(x_n)-g(y_n)\le\frac{1}{2sD(2-sL_g)}\mathcal{E}_{n-N+1}.
$$
Further,
$$\mathcal{E}_{n-N+1}\le M_2 (n-N+1)^{-\frac{1}{2\t-1}}=M_2 \left(\frac{n}{n-N+1}\right)^{\frac{1}{2\t-1}}n^{-\frac{1}{2\t-1}}\le C_2\left(\frac{\ol N_2+N+3}{\ol N_2+1}\right)^{\frac{1}{2\t-1}}n^{-\frac{1}{2\t-1}},$$ for all $n\ge \ol N_2+N+3.$
Consequently, $g(x_n)-g(y_n)\le M_3n^{-\frac{1}{2\t-1}},$ for all $n\ge \ol N_2+N+3,$ where
$M_3=\frac{1}{2sD(2-sL_g)}C_2\left(\frac{\ol N_2+N+3}{\ol N_2+1}\right)^{\frac{1}{2\t-1}}.$
Therefore, by using the latter inequality and \eqref{eb1} one has
$$
g(x_n) - g(x^*) = (g(x_n) - g(y_n)) + (g(y_n)-g(x^*) \leq \left(M_3+b_1\right)n^{\frac{-1}{2\t-1}},
\mbox{ for every  }n\ge \ol N_2+N+4.$$
Let $b_2=M_3+b_1.$
Then,
\begin{equation}\label{eb2}
g(x_n) - g(x^*)\le b_2 n^{\frac{-1}{2\t-1}},\mbox{ for every }n\ge \ol N_2+N+4.
\end{equation}

For   (b$_3$) we use \eqref{fontoska1} again. Arguing as at (a$_3$) we obtain that $0\le \mathcal{E}_{n}\le 1,$ for all $n\ge \ol N_5,$ where $\ol N_5=  \max\{\ol N_4,\ol N_2+4\}$.

Now, by using  the fact that $\t\in\left(\frac12,1\right),$ we get that
$\mathcal{E}_{n}^{1-\t}\ge \sqrt{\mathcal{E}_{n}},\mbox{ for all }n\ge \ol N_5.$  Consequently, from \eqref{fontoska1} we get
$\|\tilde{x}_{n}-x^* \|\le M_1{\mathcal{E}_{n}^{1-\t}},$ for all $n\ge\ol N_5.$ 

Since
$  \mathcal{E}_{n}^{1-\t}\le (M_2 n^{\frac{-1}{2\t-1}})^{1-\t},$ for all $n\ge \ol N_5$
we get
$$ \| {x}_{n+N} -x^* \|\le M_1 M_2^{\frac{\t-1}{2\t-1}}\left(\frac{n}{n+N}\right)^{\frac{\t-1}{2\t-1}}(n+N)^\frac{\t-1}{2\t-1},\mbox{ for all }n\ge \ol N_5.$$
Let $b_3=M_1 M_2^{\frac{\t-1}{2\t-1}}\left(\frac{\ol N_5}{\ol N_5+N}\right)^{\frac{\t-1}{2\t-1}}.$
Then,
\begin{equation}\label{eb3}
\| x_n - x^* \|\le b_3 n^{\frac{\t-1}{2\t-1}},\mbox{ for all }n\ge \ol N_5+N.
\end{equation}

For the  final estimate observe that for all $n\ge\ol N_5+N+1$ one has
\begin{align}\nonumber\| y_n - x^* \| &= \left\| x_n + \frac{\b n}{n+\a}(x_n - x_{n-1}) - x^* \right\| \leq \left(1+\frac{\b n}{n+\a}\right) \cdot \| x_n - x^* \| + \frac{\b n}{n+\a} \cdot \| x_{n-1} - x^* \|\\
\nonumber&\le\left(1+  \frac{\b n}{n+\a} \right) b_3 n^{\frac{\t-1}{2\t-1}} +  \frac{\b n}{n+\a}  b_3 (n-1)^{\frac{\t-1}{2\t-1}} \leq  \left(1+ 2 \frac{\b n}{n+\a} \right) b_3 (n-1)^{\frac{\t-1}{2\t-1}}.
\end{align}
Let $b_4 =b_3\sup_{n\ge \ol N_5+N+1}\left(1+ 2\frac{\b n}{n+\a} \right)(\frac{n}{n-1})^{\frac{1-\t}{2\t-1}}> 0$. Then,
\begin{align}\label{eb4}
\|  y_n - x^* \| \leq b_4 n^{\frac{\t-1}{2\t-1}}, \mbox{ for all }n\ge\ol N_5+N+1.
\end{align}
Now, if we take $\ol k=\ol N_5+N+1$ then \eqref{eb1}, \eqref{eb2}, \eqref{eb3} and \eqref{eb4} lead to  the conclusions (b$_1)$-(b$_4)$.
\end{proof}

\begin{remark}\label{r3} According to \cite{LP} (see also \cite{BBJ}), there are situations when it is enough to assume that the objective function $g$ has the {\L}ojasiewicz property instead of considering this assumption for the regularization function $H.$  More precisely in \cite{LP} it was obtained the following result, reformulated to our setting.
\end{remark}

\begin{proposition}[Theorem 3.6. \cite{LP}]\label{pLP} Suppose that $g$ has the {\L}ojasiewicz property with {\L}ojasiewicz exponent $\t\in\left[\frac12,1\right)$ at $\ol x\in\R^m.$ Then the function $H:\R^m\times\R^m\To\R,\,H(x,y)=g(x)+\frac12\|y-x\|^2$ has the {\L}ojasiewicz property at $(\ol x,\ol x)\in\R^m\times\R^m$ with the same {\L}ojasiewicz exponent $\t.$
\end{proposition}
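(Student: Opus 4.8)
The plan is to derive the {\L}ojasiewicz inequality for $H$ at $(\ol x,\ol x)$ directly from the one for $g$ at $\ol x$, while tracking the constants explicitly. First I would record that $(\ol x,\ol x)\in\crit H$: by the gradient formula $\n H(x,y)=(\n g(x)+x-y,\,y-x)$ computed in Lemma \ref{H2forH}, one has $\n H(\ol x,\ol x)=(\n g(\ol x),0)$, and this vanishes because the {\L}ojasiewicz hypothesis forces $\ol x\in\crit g$, so $\n g(\ol x)=0$. Writing $v=x-y$, I would then note the two identities $H(x,y)-H(\ol x,\ol x)=(g(x)-g(\ol x))+\tfrac12\|v\|^2$ and $\|\n H(x,y)\|^2=\|\n g(x)+v\|^2+\|v\|^2$. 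The second identity yields the two elementary bounds $\|v\|\le\|\n H(x,y)\|$ and $\|\n g(x)\|\le\|\n g(x)+v\|+\|v\|\le 2\|\n H(x,y)\|$, since each summand on the right is dominated by $\|\n H(x,y)\|$.

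Next I would split the left-hand side by the triangle inequality and the subadditivity of $t\mapsto t^\t$ on $[0,\infty)$, which is valid for $\t\in[0,1]$:
$$|H(x,y)-H(\ol x,\ol x)|^\t\le |g(x)-g(\ol x)|^\t+\left(\tfrac12\right)^\t\|v\|^{2\t}.$$
For the first summand I would invoke the {\L}ojasiewicz property of $g$ at $\ol x$, giving $|g(x)-g(\ol x)|^\t\le K\|\n g(x)\|\le 2K\|\n H(x,y)\|$ whenever $\|x-\ol x\|<\e$. For the second summand this is where the assumption $\t\ge\tfrac12$ is used: since $2\t\ge 1$, on the region where $\|v\|\le 1$ we have $\|v\|^{2\t}=\|v\|\cdot\|v\|^{2\t-1}\le\|v\|\le\|\n H(x,y)\|$. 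Combining the two estimates gives $|H(x,y)-H(\ol x,\ol x)|^\t\le\bigl(2K+(\tfrac12)^\t\bigr)\|\n H(x,y)\|$.

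Finally I would fix the neighbourhood. Using $\|x-\ol x\|\le\|(x,y)-(\ol x,\ol x)\|$ and $\|v\|=\|x-y\|\le\|x-\ol x\|+\|y-\ol x\|\le 2\|(x,y)-(\ol x,\ol x)\|$, the choice $\e'=\min\{\e,\tfrac12\}$ ensures that $\|(x,y)-(\ol x,\ol x)\|<\e'$ implies both $\|x-\ol x\|<\e$ and $\|v\|\le 1$. Hence the displayed estimate holds on the ball $B((\ol x,\ol x),\e')$ with {\L}ojasiewicz constant $K'=2K+(\tfrac12)^\t$ and the same exponent $\t$, which is exactly the asserted property of $H$.

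The main obstacle is conceptual rather than computational: one must recognize that the quadratic regularizer $\tfrac12\|v\|^2$ carries {\L}ojasiewicz exponent $\tfrac12$ (its gradient is $v$, so $(\tfrac12\|v\|^2)^{1/2}$ is a multiple of $\|v\|$), so for the sum $H$ to retain the exponent $\t$ of $g$ the larger of the two exponents must dominate, which is precisely the hypothesis $\t\ge\tfrac12$. This is what allows the term $\|v\|^{2\t}$ to be absorbed into $\|\n H\|\ge\|v\|$ near the critical point; if instead $\t<\tfrac12$ then $\|v\|^{2\t}\le\|v\|$ fails for small $\|v\|$ and the conclusion with the \emph{same} exponent would break down.
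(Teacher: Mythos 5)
Your argument is correct. Note, however, that the paper does not prove this proposition at all: it is imported as Theorem 3.6 of \cite{LP}, whose proof is carried out in a nonsmooth setting with limiting subdifferentials and a general calculus rule for KL exponents. Your proof is a self-contained, elementary verification tailored to the smooth case actually needed here: the decomposition $\|\n H(x,y)\|^2=\|\n g(x)+v\|^2+\|v\|^2$ with $v=x-y$ gives both $\|v\|\le\|\n H(x,y)\|$ and $\|\n g(x)\|\le 2\|\n H(x,y)\|$, the subadditivity of $t\mapsto t^\t$ on $[0,+\infty)$ splits the left-hand side into the two summands, and the hypothesis $\t\ge\frac12$ is invoked exactly where it must be, namely to absorb $\|v\|^{2\t}$ into $\|v\|\le\|\n H(x,y)\|$ on a small enough ball; the bookkeeping of the neighbourhood $\e'=\min\{\e,\frac12\}$ and the constant $K'=2K+(\frac12)^\t$ is also right. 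Your closing remark correctly identifies why the threshold $\frac12$ is the natural obstruction: the quadratic regularizer contributes exponent $\frac12$ and the larger exponent of the two summands must prevail. The only imprecision is the claim that the {\L}ojasiewicz hypothesis \emph{forces} $\n g(\ol x)=0$; the inequality by itself does not (it holds trivially near any noncritical point), but in Definition \ref{Lexpo} the property is only formulated at points of $\crit g$, so $\ol x\in\crit g$ is part of the hypothesis and $(\ol x,\ol x)\in\crit H$ then follows from the gradient formula as you state.
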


\begin{corollary} In the settings of problem \eqref{opt-pb}, for some starting points $x_0,x_{-1}\in\R^m,$  consider the sequences $(x_n)_{n\in\N},\,(y_n)_{n\in\N}$ generated by Algorithm \eqref{generaldiscrete}. Assume that $g$ is bounded from below and $g$ has the {\L}ojasiewicz property at $x^*\in\omega((x_n)_{n\in\N})$, (which obviously must be assumed  nonempty), with {\L}ojasiewicz exponent $\t\in\left[\frac12,1\right)$. If $\t=\frac12$ then the convergence rates (a$_1)$-(a$_4)$,  if  $\t\in\left(\frac12,1\right)$  then the convergence rates (b$_1)$-(b$_4)$ stated in the conclusion of Theorem \ref{th-conv-rate} hold.
\end{corollary}
\begin{proof}
 Indeed, from Lemma \ref{regularization} (i) one has $z^*=(x^*,x^*)\in\omega((z_n)_{n\in\N})$ and according to Proposition \ref{pLP}
$H$ has the {\L}ojasiewicz property at $z^*$ with {\L}ojasiewicz exponent $\t.$ Hence, Theorem \ref{th-conv-rate} can be applied. 
\end{proof}

As an easy consequence  of Theorem \ref{th-conv-rate} we obtain linear convergence rates for the sequences generated by Algorithm \eqref{generaldiscrete} in the case when the objective function $g$ is strongly convex.
 For similar results  concerning Polyak's algorithm and ergodic convergence rates we refer to \cite{SYLHGJ} and  \cite{GFJ}.

\begin{theorem}\label{ratestrconv}
In the settings of problem \eqref{opt-pb}, for some starting points $x_0,x_{-1}\in\R^m,$  consider the sequences $(x_n)_{n\in\N},\,(y_n)_{n\in\N}$ generated by Algorithm \eqref{generaldiscrete}. Assume that $g$ is strongly convex and let $x^*$ be the unique minimizer of $g.$ Then, there exists $Q\in[0,1)$ and there exist $a_1,a_2,a_3,a_4>0$ and $\ol k\in\N$ such that the following statements hold true:
\begin{itemize}
\item[$(\emph{a}_1)$] $g(y_{n})-g(x^* )\le a_1 {Q^n}$ for  every $n\ge \ol k$,
\item[$(\emph{a}_2)$] $g(x_{n})-g(x^* )\le a_2{Q^n}$ for  every $n\ge\ol k$,
\item[$(\emph{a}_3)$] $\|x_n-x^* \|\le a_3 {Q^{\frac{n}{2}}}$ for  every $n\ge\ol k$,
\item[$(\emph{a}_4)$] $\|y_n-x^* \|\le a_4 {Q^{\frac{n}{2}}}$ for all  $n\ge\ol k$.
\end{itemize}
\end{theorem}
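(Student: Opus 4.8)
The plan is to obtain this theorem as a direct corollary of Theorem \ref{th-conv-rate} specialized to {\L}ojasiewicz exponent $\theta=\frac12$, so the substance of the argument lies entirely in checking that strong convexity forces the hypotheses of that theorem to hold. First I would record the structural consequences of strong convexity: a strongly convex $g$ is coercive and bounded from below and possesses a unique minimizer $x^*$ which is also its unique critical point (see \cite{BausC}). Coercivity is exactly the condition invoked in Remark \ref{r2}, which guarantees that $(x_n)_{n\in\N}$, $(y_n)_{n\in\N}$ and $(z_n)_{n\in\N}$ are bounded and that $\omega((z_n)_{n\in\N})$ is nonempty. Hence the standing assumptions ``$g$ bounded from below'' and ``$\omega((z_n)_{n\in\N})$ nonempty'' of Theorem \ref{th-conv-rate} are satisfied.

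Next I would supply the {\L}ojasiewicz property with the correct exponent. By \cite{att-b-red-soub2010} every strongly convex function satisfies the {\L}ojasiewicz property with exponent $\theta=\frac12$ at its critical point $x^*$. Applying Proposition \ref{pLP} transfers this property to the regularization $H(x,y)=g(x)+\frac12\|y-x\|^2$, which then satisfies the {\L}ojasiewicz property at $z^*=(x^*,x^*)$ with the same exponent $\theta=\frac12$. It remains only to confirm that $z^*$ is genuinely a cluster point of $(z_n)_{n\in\N}$: by Lemma \ref{regularization}(i) one has $\omega((x_n)_{n\in\N})\subseteq\crit g=\{x^*\}$, and since $(x_n)_{n\in\N}$ is bounded this cluster set is nonempty, forcing $\omega((x_n)_{n\in\N})=\{x^*\}$ and $\omega((z_n)_{n\in\N})=\{(x^*,x^*)\}$. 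Thus $z^*=(x^*,x^*)\in\omega((z_n)_{n\in\N})$.

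With all hypotheses verified for the parameter $\theta=\frac12$, I would simply invoke Theorem \ref{th-conv-rate}. Because $\frac12$ lies in the interval $\left(0,\frac12\right]$, the applicable conclusions are $(a_1)$--$(a_4)$ of that theorem, which are precisely the claimed geometric estimates with some $Q\in[0,1)$; moreover the limit point delivered by the theorem is the minimizer $x^*$, as required.

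The only real point requiring care is bookkeeping rather than analysis: one must ensure that the exponent $\frac12$ coming from strong convexity falls into the $\theta\in\left(0,\frac12\right]$ branch of Theorem \ref{th-conv-rate} (the linear regime), not the sublinear branch $\theta\in\left(\frac12,1\right)$, and that the point at which the underlying KL/{\L}ojasiewicz inequality is applied is the actual limit. Both are immediate here: the uniqueness of the critical point collapses $\omega((x_n)_{n\in\N})$ to the single point $x^*$, so no separate convergence argument is needed, and the boundary case $\theta=\frac12$ is deliberately included in the linear regime of Theorem \ref{th-conv-rate}.
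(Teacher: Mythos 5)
Your proposal is correct and follows essentially the same route as the paper: invoke the fact from \cite{att-b-red-soub2010} that strongly convex functions satisfy the {\L}ojasiewicz property with exponent $\t=\frac12$, transfer this to $H$ via Proposition \ref{pLP}, and apply the $\t\in\left(0,\frac12\right]$ branch of Theorem \ref{th-conv-rate}. The only difference is that you spell out the hypothesis-checking (coercivity, boundedness, nonemptiness of $\omega((z_n)_{n\in\N})$, and the identification $\omega((z_n)_{n\in\N})=\{(x^*,x^*)\}$) that the paper's three-sentence proof leaves implicit, which is a welcome addition rather than a deviation.
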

\begin{proof}
We emphasize that a strongly convex function  is coercive, see \cite{BausC}. According to Proposition \ref{xbounded} the function $g$ is bounded from bellow. According to \cite{attouch-bolte2009}, $g$ satisfies the {\L}ojasiewicz property at $x^*$ with the {\L}ojasiewicz exponent $\t=\frac12.$ Then, according to Proposition \ref{pLP}, $H$ satisfies the {\L}ojasiewicz property at $(x^*,x^*)$ with the {\L}ojasiewicz exponent $\t=\frac12.$ The conclusion now follows from Theorem \ref{th-conv-rate}.
\end{proof}

\subsection{Numerical experiments}\label{ssecA2}

The aim of this section is to highlight via numerical experiments some interesting features of the generic Algorithm \eqref{generaldiscrete}.
In order to give a better perspective on the advantages and disadvantages of Algorithm \eqref{generaldiscrete} for different choices of stepsizes and inertial coefficients, in our  numerical experiments we consider the  following algorithms, all associated to the minimization problem \eqref{opt-pb}.

{(a)} A particular form of Nesterov's algorithm, (see \cite{ch-do2015}), that is,
\begin{equation}\label{nest1}
 x_{n+1}=x_n+\frac{ n}{n+3}(x_n-x_{n-1})-s\n g\left(x_n+\frac{n}{n+3}(x_n-x_{n-1})\right),
 \end{equation}
where $s=\frac{1}{L_g}.$ According to \cite{nesterov83,ch-do2015}, for optimization problems with convex objective function, Algorithm \eqref{nest1} provides convergence rates of order $\mathcal{O}\left(\frac{1}{n^2}\right)$ for the energy error $g(x_n)-\min g.$

{(b)}  Nesterov's algorithm associated to optimization problems with a $\mu-$strongly convex function (see \cite{Nest}), that is,
\begin{equation}\label{neststr1}
 x_{n+1}=x_n+\frac{ \sqrt{L_g}-\sqrt{\mu}}{\sqrt{L_g}+\sqrt{\mu}}(x_n-x_{n-1})-s\n g\left(x_n+\frac{n}{n+3}(x_n-x_{n-1})\right),
 \end{equation}
where $s=\frac{1}{L_g}.$ According to  \cite{Nest} Algorithm \eqref{neststr1} provides linear convergence rates.

{(c)} The  gradient descent algorithm with a $\mu-$strongly convex objective function, that is,
\begin{equation}\label{graddesc}
 x_{n+1}=x_n-s\n g\left(x_n\right),
 \end{equation}
 where the maximal admissible stepsize is $s=\frac{2}{L_g+\mu}.$
According to some recent results \cite{ADR} depending by the geometrical properties of the objective function the gradient descent method  may have a better convergence rate than Algorithm \eqref{nest1}.

{(d)} A particular form of Polyak's algorithm (see \cite{OCBP,BCL1}), that is,
\begin{equation}\label{polyak1}
x_{n+1}=x_n+\frac{\b n}{n+3}(x_n-x_{n-1})-s\n g\left(x_n\right),
\end{equation}
with $\b\in(0,1)$ and $0<s<\frac{2(1-\b)}{L_g}.$ For a strongly convex objective function this algorithm provides linear convergence rates \cite{Ochs}.

{(e)} Algorithm \eqref{generaldiscrete} studied in this paper, with $\a=3$, which in the view of Theorem \ref{ratestrconv} assures linear convergence rates whenever the objective function is strongly convex. 
\vskip0.5cm

{\bf 1.} In our first numerical experiment we consider as an objective function the  strongly convex function $$g:\R^2\To\R,\,g(x,y)= 8x^2 + 50y^2.$$
Since $\n g(x,y)=(16x,100y)$, we infer that the Lipschitz constant of its gradient is $L_g =100$ and the strong convexity parameter $\mu=16$. Observe that the global minimum of $g$ is attained at $(0,0),$ further $g(0,0)=0.$

 Obviously, for this choice of the objective function $g$, the stepsize will become $s=\frac{1}{L_g}=0.01$ in Algorithm \eqref{nest1} and Algorithm \eqref{neststr1}, meanwhile in Algorithm \eqref{graddesc} the stepsize is $s=\frac{2}{L_g+\mu}\approx  0.0172$.

In Algorithm \eqref{polyak1} and Algorithm \eqref{generaldiscrete} we consider the instances $$(\b,s)\in\{(0.33,0.0133), (0.5,0.009), (0.66,0.0067)\}.$$ Obviously for these values we have $\b\in(0,1)$ and $0<s<\frac{2(1-\b)}{L_g}.$

We run the simulations, by considering  the same starting points $x_0=x_{-1}=(1,-1)\in\R^2$ until the energy error $|g(x_{n})-\min g|$ attains the value $10^{-150}$. The results are shown in Figure 1, where the horizontal axis measures the number of iterations and the vertical axis  shows the error $|g(x_{n})-\min g|$.

\begin{figure}[hbt!]
  \subcaptionbox{$(\b,s)=(0.33,0.0133)$\label{fig1:a}}{\includegraphics[width=.33\linewidth]{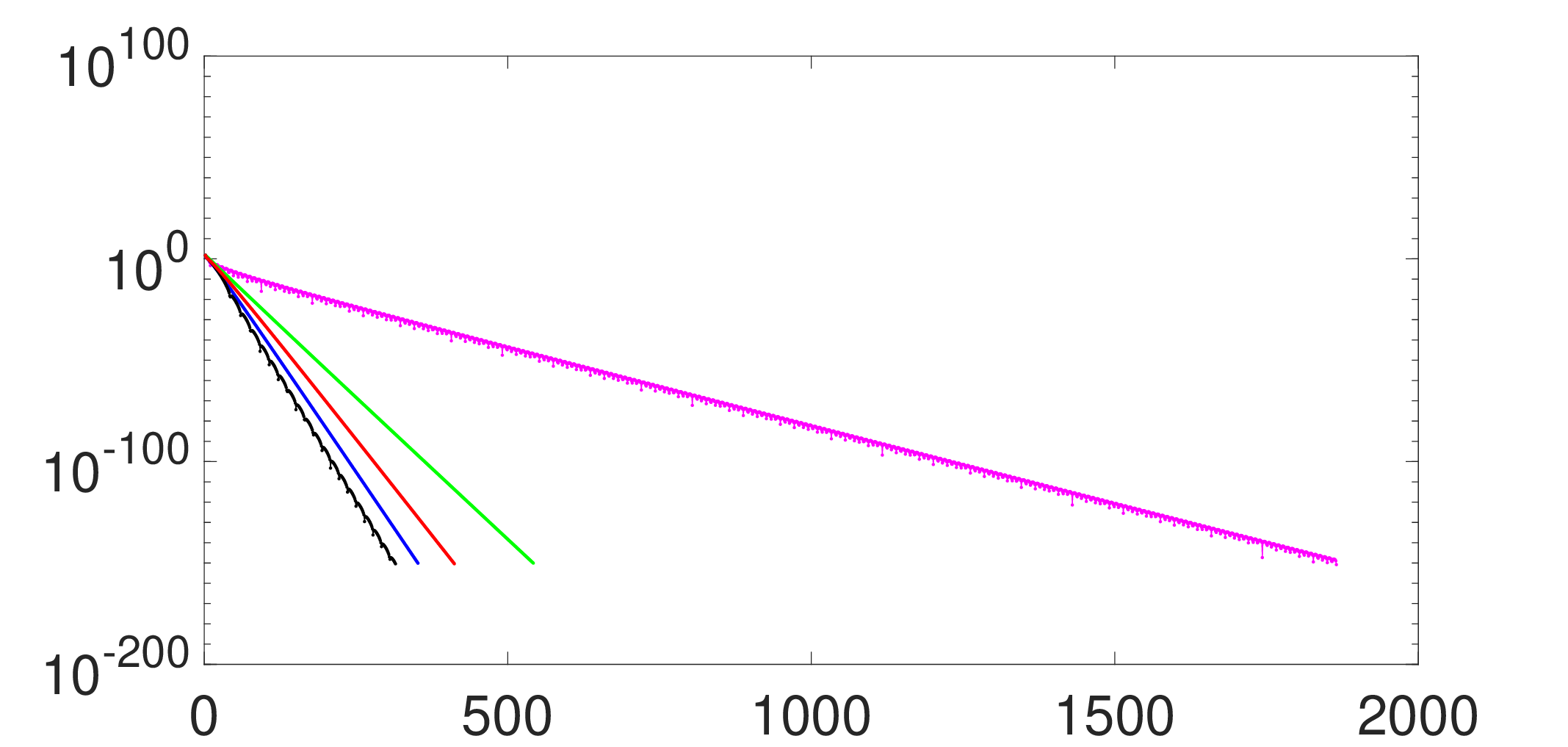}}\hfill%
   \subcaptionbox{$(\b,s)=(0.5,0.009)$\label{fig1:c}}{\includegraphics[width=.33\linewidth]{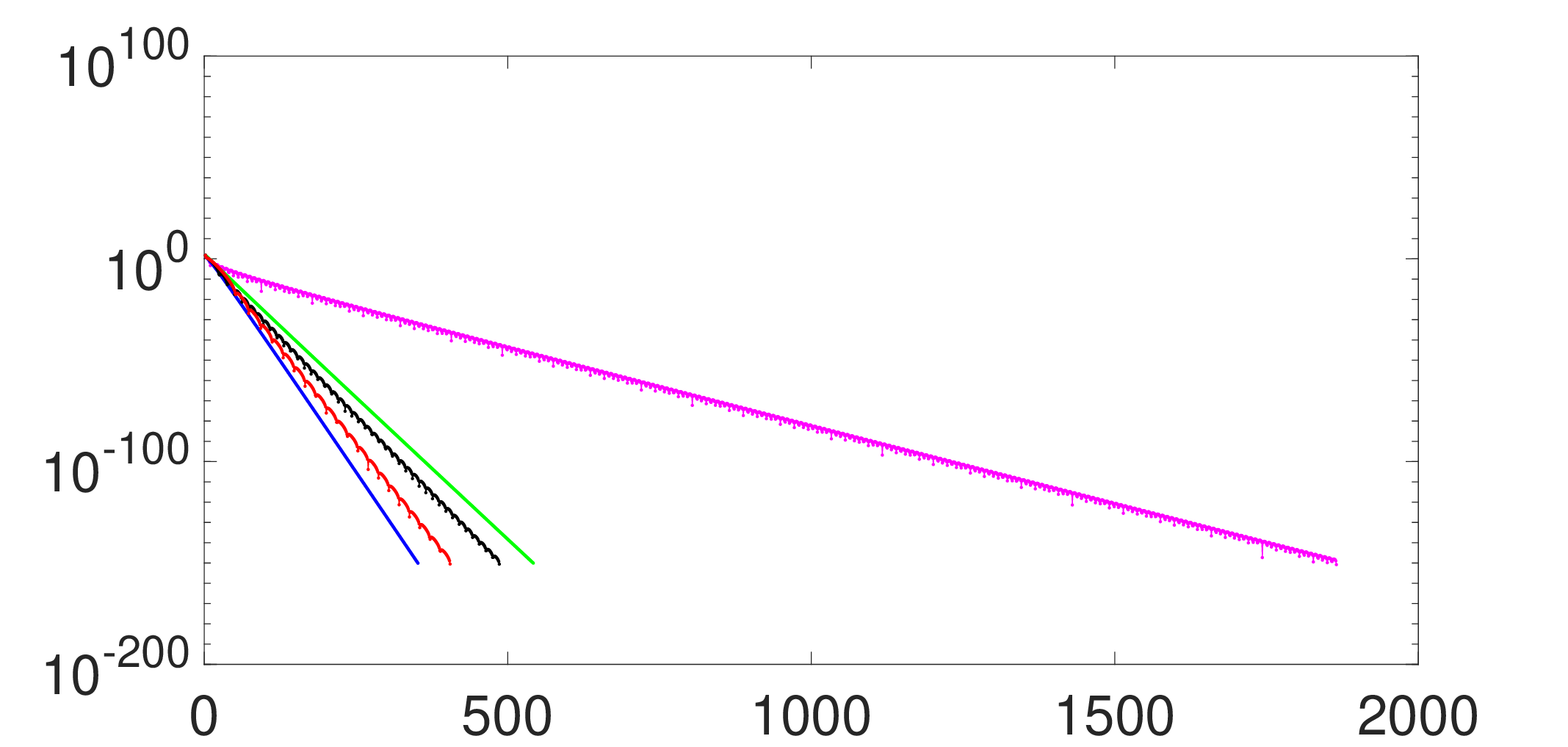}}\hfill%
  \subcaptionbox{$(\b,s)=(0.66,0.0067)$\label{fig1:e}}{\includegraphics[width=.33\linewidth]{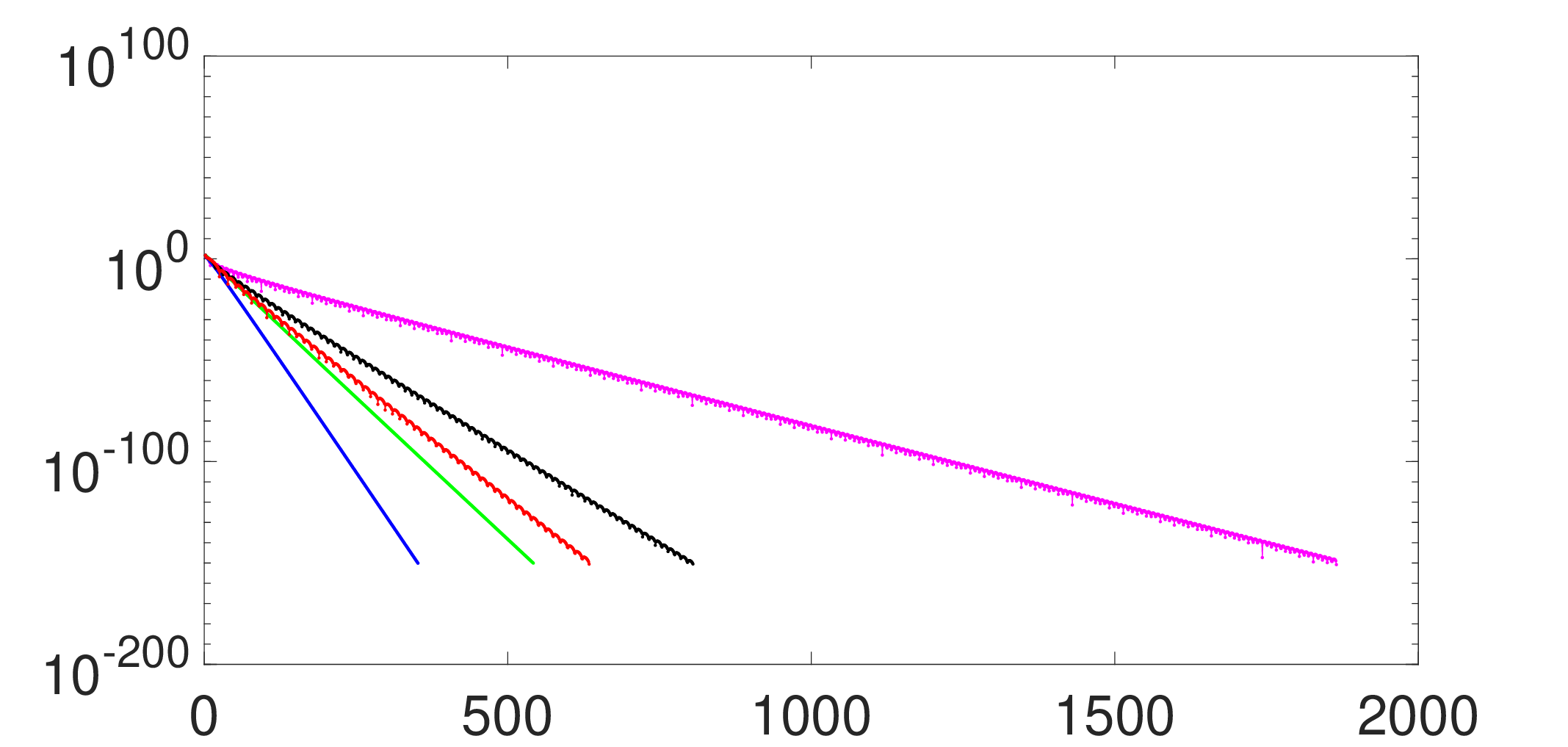}}\hfill%
\caption{Comparing the energy error $|g(x_n)-\min g|$  for  Algorithm \eqref{nest1} (magenta), Algorithm \eqref{neststr1} (blue), Algorithm \eqref{graddesc} (green), Algorithm \eqref{polyak1} (black) and Algorithm \eqref{generaldiscrete} (red), in the framework of the  minimization of the strongly convex function $g(x,y)=8x^2+50y^2$, by considering different stepsizes and different inertial coefficients}
\label{fig1}
\end{figure}

Further, we are also interested in the behaviour of the generated sequences $x_n$. To this end, we run the algorithms until the absolute error $\|x_n-x^*\|$ attains the value $10^{-150}$, where $x^*=(0,0)$ is the unique minimizer of the strongly convex function $g.$ The results are shown in Figure 2, where the horizontal axis measures the number of iterations and the vertical axis  shows the error $\|x_{n}-x^*\|$.
\begin{figure}[hbt!]
  \subcaptionbox{$(\b,s)=(0.33,0.0133)$\label{fig1:b}}{\includegraphics[width=.33\linewidth]{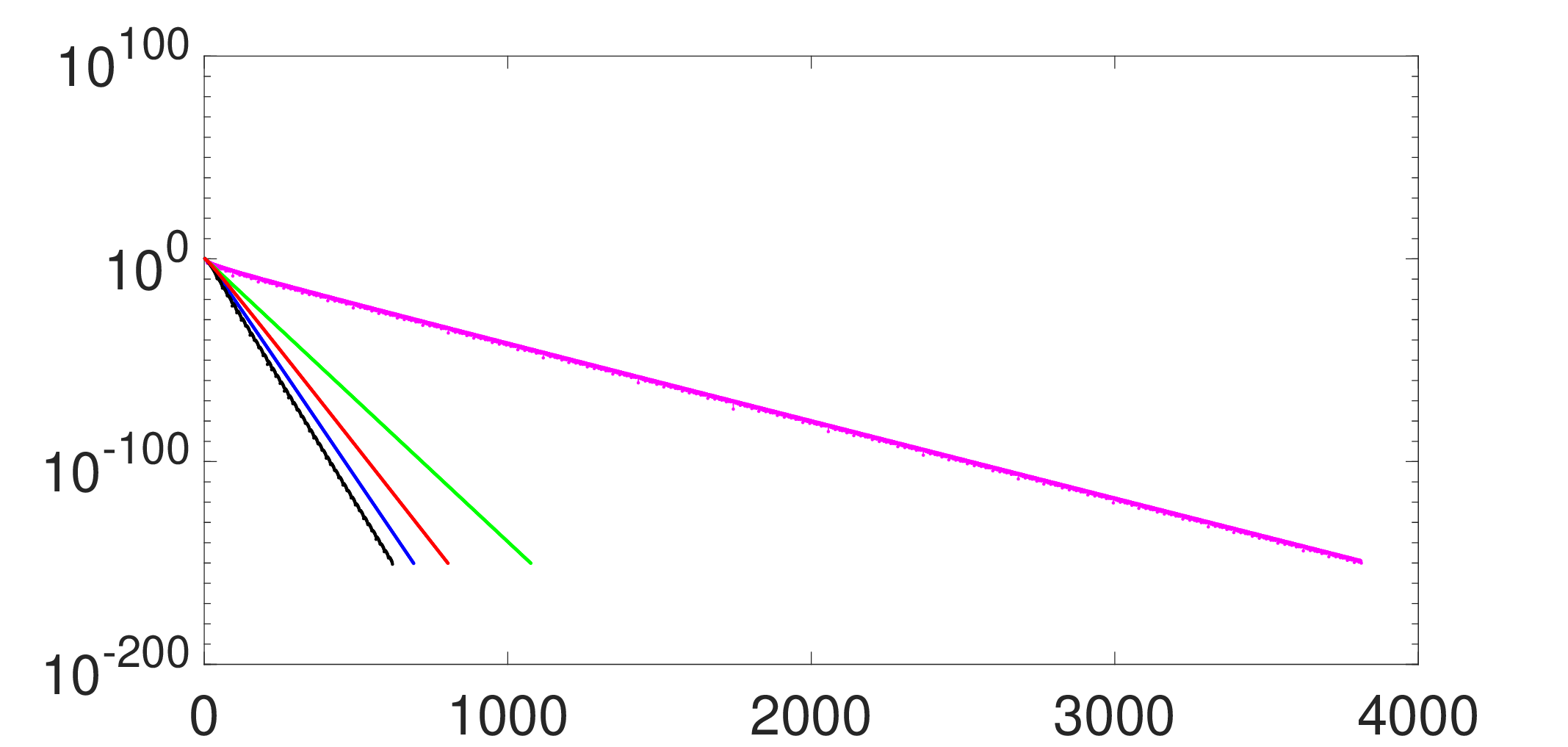}}\hfill%
 \subcaptionbox{$(\b,s)=(0.5,0.009)$\label{fig1:d}}{\includegraphics[width=.33\linewidth]{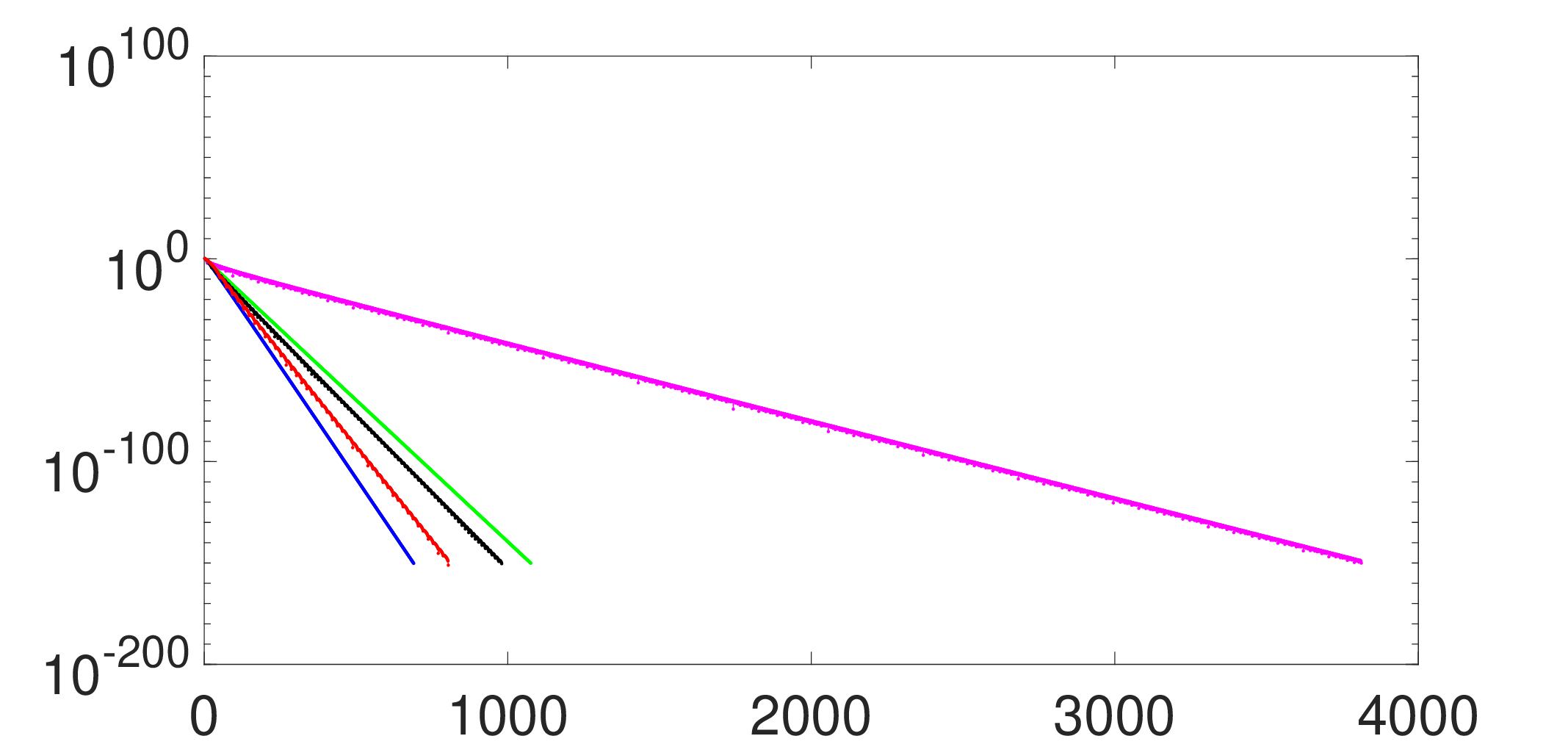}}\hfill%
  \subcaptionbox{$(\b,s)=(0.66,0.0067)$\label{fig1:f}}{\includegraphics[width=.33\linewidth]{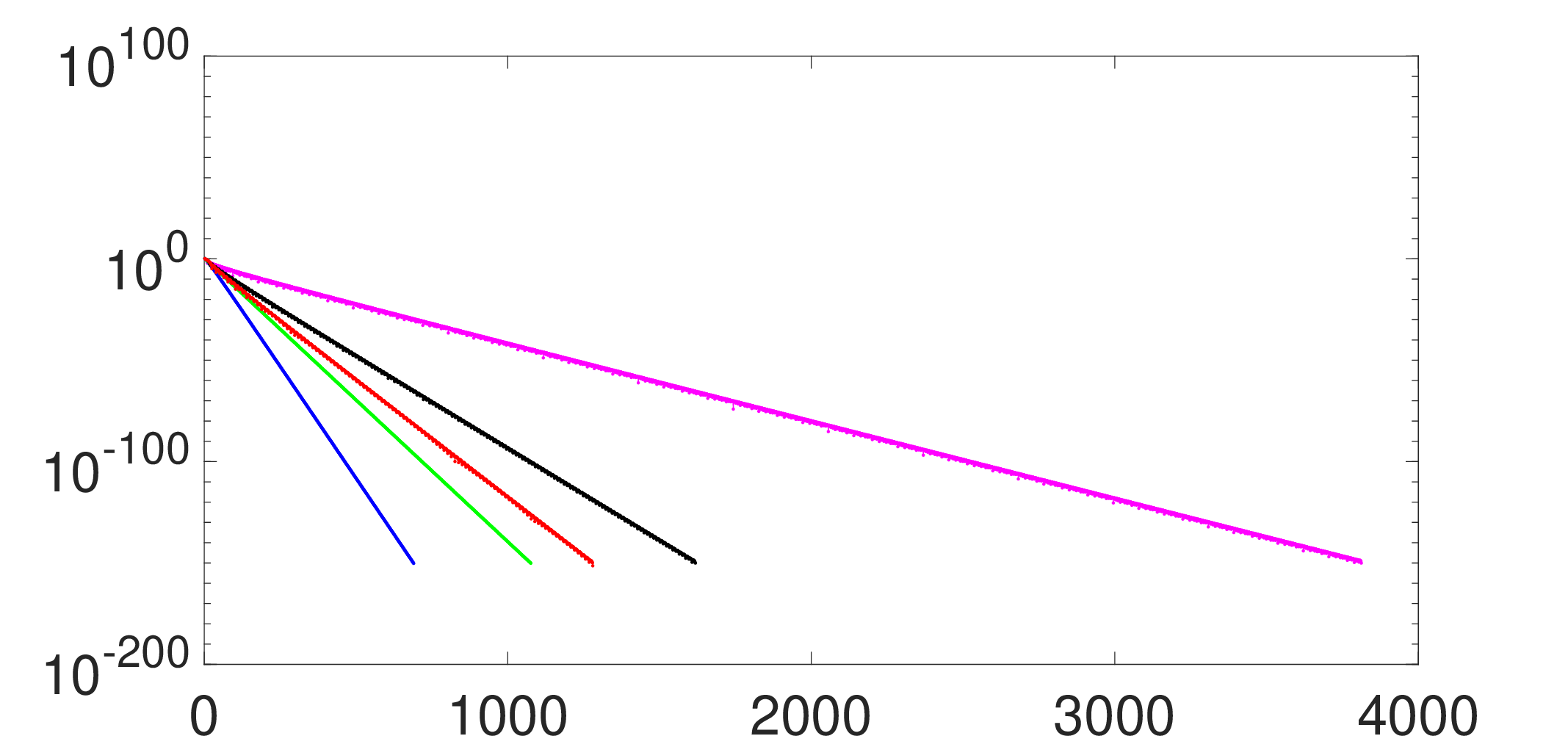}}\hfill%
\caption{Comparing the iteration error $\|x_n-x^*\|$  for  Algorithm \eqref{nest1} (magenta), Algorithm \eqref{neststr1} (blue), Algorithm \eqref{graddesc} (green), Algorithm \eqref{polyak1} (black) and Algorithm \eqref{generaldiscrete} (red), in the framework of the  minimization of the strongly convex function $g(x,y)=8x^2+50y^2$, by considering different stepsizes and different inertial coefficients}
\label{fig2}
\end{figure}

The experiments, depicted in Figure 1 and Figure 2, show that Algorithm \eqref{generaldiscrete}  has  a good behaviour since may outperform Algorithm \eqref{nest1},  Algorithm \eqref{graddesc}  and also Algorithm \eqref{polyak1}. However Algorithm \eqref{neststr1}  seems to have in all these cases a better behaviour.

\begin{remark} Nevertheless, for an appropriate choice of the parameters $\a$ and $\b$  Algorithm \eqref{generaldiscrete} outperforms Algorithm  \eqref{neststr1}. In order to sustain our claim observe that the inertial parameter in Algorithm \eqref{generaldiscrete} is $\frac{\b n}{n+\a}\approx \b$ when $n$ is big enough, (or $\a$ is very small). Consequently, if one takes $\b\approx \frac{\sqrt{L_g}-\sqrt{\mu}}{\sqrt{L_g}+\sqrt{\mu}}$, then  for $n$ big enough the inertial parameters in Algorithm \eqref{generaldiscrete} and Algorithm  \eqref{neststr1} are very close. However if $\b<\frac{1}{2}$ then the stepsize in Algorithm \eqref{generaldiscrete} is clearly better then the stepsize in Algorithm \eqref{neststr1}. This happens whenever $\frac{\sqrt{L_g}-\sqrt{\mu}}{\sqrt{L_g}+\sqrt{\mu}}<\frac12,$ that is $L_g< 9\mu.$
\end{remark}
 In the case of  the strongly convex function $g$ considered before, one has $\frac{\sqrt{L_g}-\sqrt{\mu}}{\sqrt{L_g}+\sqrt{\mu}}=\frac37\approx0.428.$ Therefore, in the following numerical experiment we consider Algorithm \eqref{polyak1}  and Algorithm \eqref{generaldiscrete} with $\b = 0.4$ and optimal admissible constant  stepsize $s=0.0119< \frac{2(1-0.6)}{L_g}=0.012$, and all the other instances  we let unchanged.  We run the simulations until the energy error $|g(x_{n})-\min g|$ and the absolute error $\|x_n-x^*\|$ attains the value $10^{-150},$ Figure 3 (a)-(b). Observe that in this case Algorithm \eqref{generaldiscrete} clearly outperforms Algorithm \eqref{neststr1}.
\begin{figure}[hbt!]
\subcaptionbox{$(\b,s)=(0.4,0.0119)$\label{fig3:a}}{\includegraphics[width=.5\linewidth]{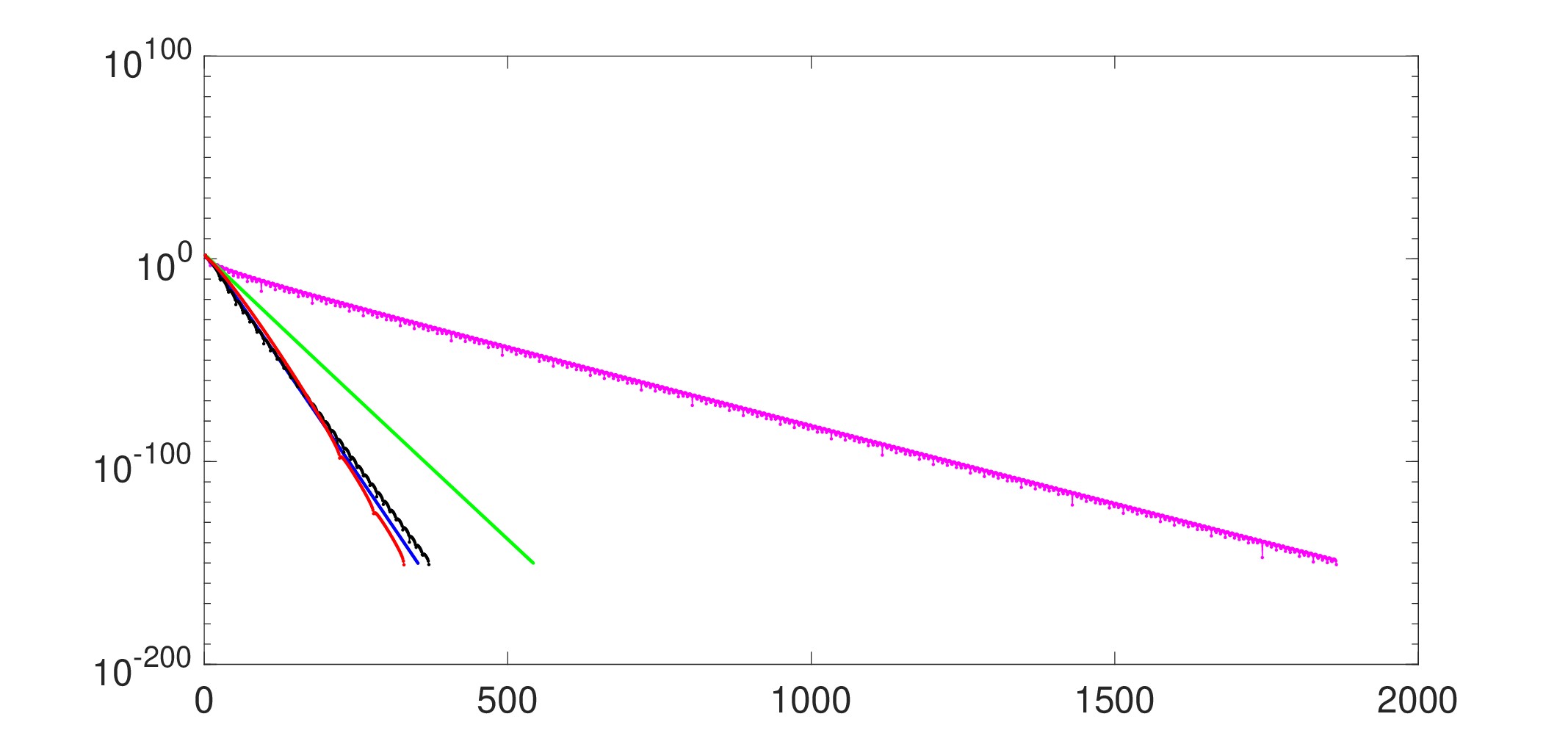}}\hfill%
  \subcaptionbox{$(\b,s)=(0.4,0.0119)$\label{fig3:b}}{\includegraphics[width=.5\linewidth]{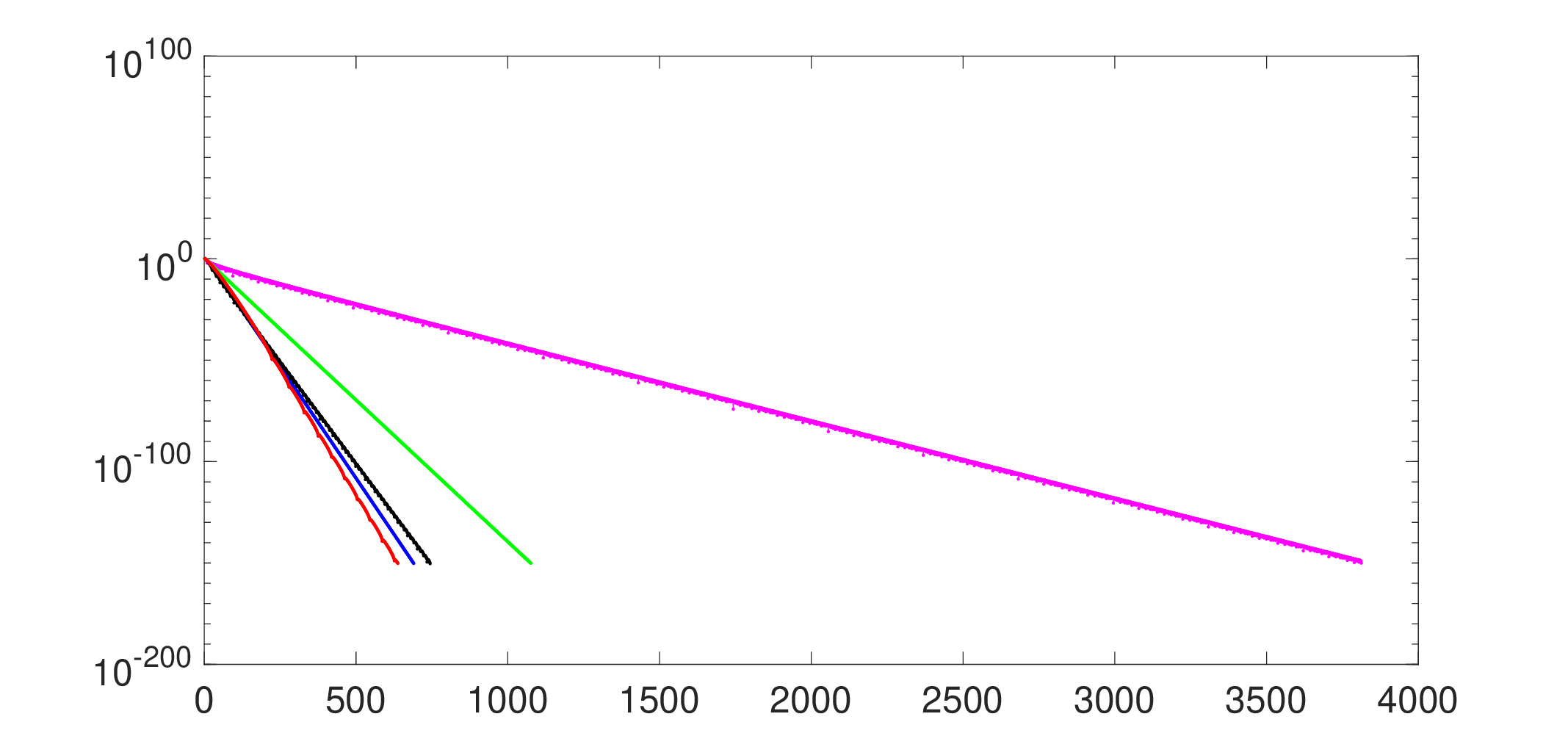}}%
\caption{ }
\label{fig3}
\end{figure}

{\bf 2.} In our second numerical experiment we consider a non-convex objective function
 $$g:\R^2\To\R,\,g(x,y)=x^2+y^2(1-x).$$
 Then, $\n g(x,y)=(2x-y^2,2y(1-x))$, which is obviously not Lipschitz continuous on $\R^2$, but the restriction of $\n g$ on $[-1,1]\times[-1,1]$  is Lipschitz continuous. Indeed, for $(x_1,y_1),(x_2,y_2)\in [-1,1]\times[-1,1]$ one has
 $$|(2x_1-y_1^2)-(2x_2-y_2^2)|\le2|x_1-x_2|+2|y_1-y_2|\le 2\sqrt{2}\|(x_1,y_1)-(x_2,y_2)\|$$
 and
 \begin{align}\nonumber |2y_1(1-x_1)-2y_2(1-x_2)|&=2|(y_1-y_2)-y_1(x_1-x_2)-x_2(y_1-y_2)|\le 2|x_1-x_2|+4|y_1-y_2|\\
 \nonumber&\le 4\sqrt{2}\|(x_1,y_1)-(x_2,y_2)\|,
 \end{align}
 consequently,
 \begin{align}\nonumber\|\n g(x_1,y_1)-\n g(x_2,y_2)\|&=\sqrt{|(2x_1-y_1^2)-(2x_2-y_2^2)|^2+|2y_1(1-x_1)-2y_2(1-x_2)|^2}\\
 \nonumber&\le\sqrt{ 8\|(x_1,y_1)-(x_2,y_2)\|^2+ 32\|(x_1,y_1)-(x_2,y_2)\|^2}\\
 \nonumber&=\sqrt{40}\|(x_1,y_1)-(x_2,y_2)\|.
 \end{align}

 Hence, one can consider that the Lipschitz constant of $g$ on $[-1,1]\times[-1,1]$ is $L_g=\sqrt{40}.$

 It is easy to see that the critical point of $g$ on $[-1,1]\times[-1,1]$ is $x^*=(0,0).$ Further, the Hessian of $g$ is
 $$\n^2 g(x,y)=\left(
                    \begin{array}{cc}
                      2 & -2y \\
                      -2y & 2-2x \\
                    \end{array}
                  \right)$$  which is indefinite on $[-1,1]\times[-1,1]$, hence $g$ is neither  convex nor concave on $[-1,1]\times[-1,1].$

Since   $\det\n^2 g(x^*)=4>0$ and $g_{xx}(x^*)=2>0$, we obtain that $x^*$ is a local minimum of $g$ and actually is the unique minimum of $g$ on $[-1,1]\times[-1,1]$. However, $x^*$ is not a global minimum of $g$ since for instance $g(2,3)=-9<0=g(x^*).$

In our following numerical experiments we will use different inertial parameters in order to compare Algorithm \eqref{generaldiscrete}, Algorithm \eqref{nest1} and Algorithm \eqref{polyak1}. In these experiments we run the algorithms until the energy error $|g(x_{n})-g(x^*)|$ attains the value $10^{-50}$  and the iterate error $\|x_n-x^*\|$ attains the value $10^{-50}$.

Since $L_g=\sqrt{40}$ we take in Algorithm \eqref{nest1} the stepsize $s=0.158\approx \frac{1}{\sqrt{40}}$.  In Algorithm \eqref{polyak1} and Algorithm \eqref{generaldiscrete} we consider the instances $$(\b,s)\in\{(0.33,0.210), (0.5,0.157), (0.66,0.107)\}.$$ Obviously for these values we have $\b\in(0,1)$ and $0<s<\frac{2(1-\b)}{L_g}.$ Further, we consider the  same starting points $x_0=x_{-1}=(0.5,-0.5)$ from $[-1,1]\times[-1,1]$.

The results are shown in Figure 4 (a)-(c) and Figure 5 (a)-(c), where the horizontal axes measure the number of iterations and the vertical axes  show the error $|g(x_{n})- g(x^*)|$  and the  error $\|x_n-x^*\|$, respectively.

\begin{figure}[hbt!]
  \subcaptionbox{$(\b,s)=(0.33,0.210)$\label{fig4:a}}{\includegraphics[width=.33\linewidth]{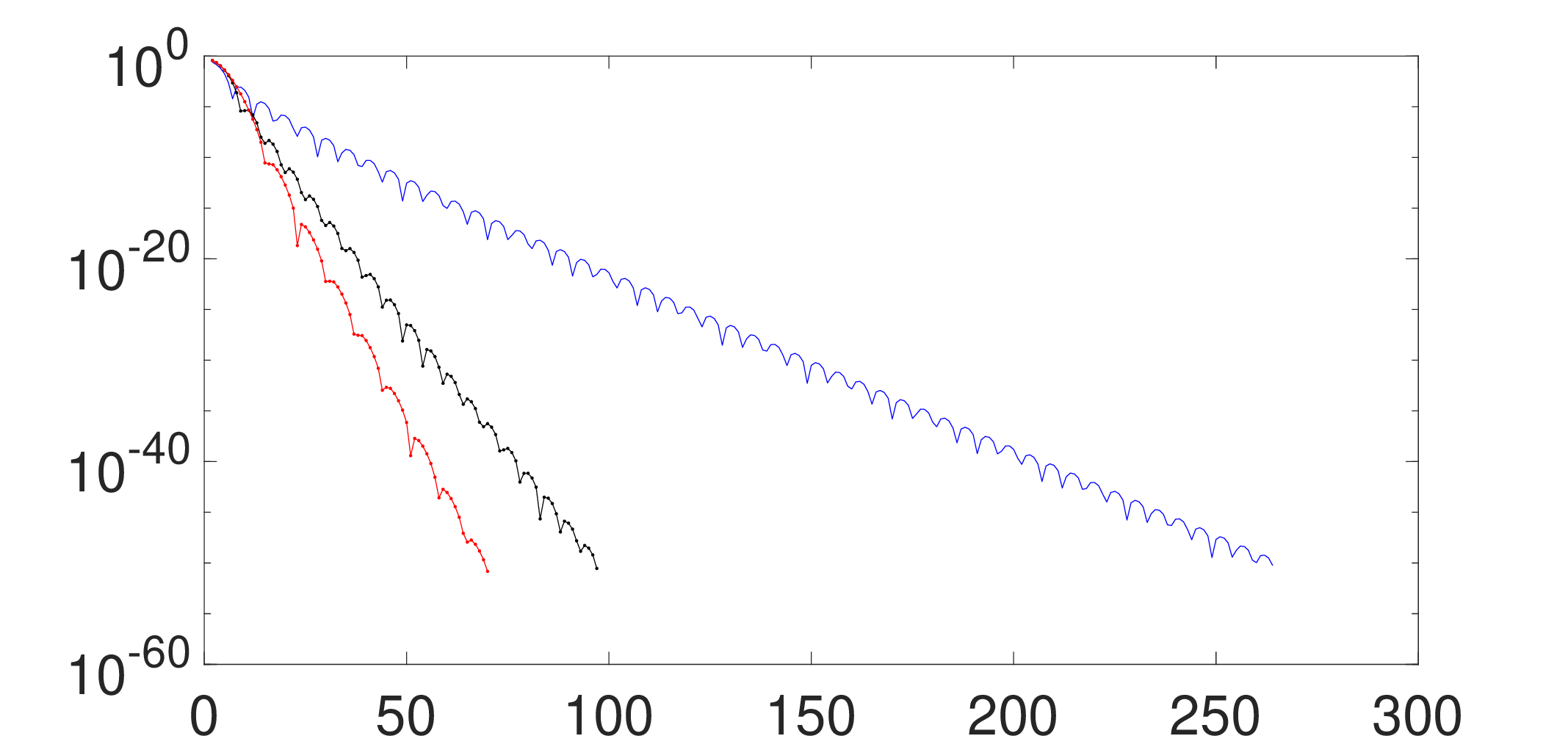}}\hfill%
  \subcaptionbox{$(\b,s)=(0.5,0.157)$\label{fig4:b}}{\includegraphics[width=.33\linewidth]{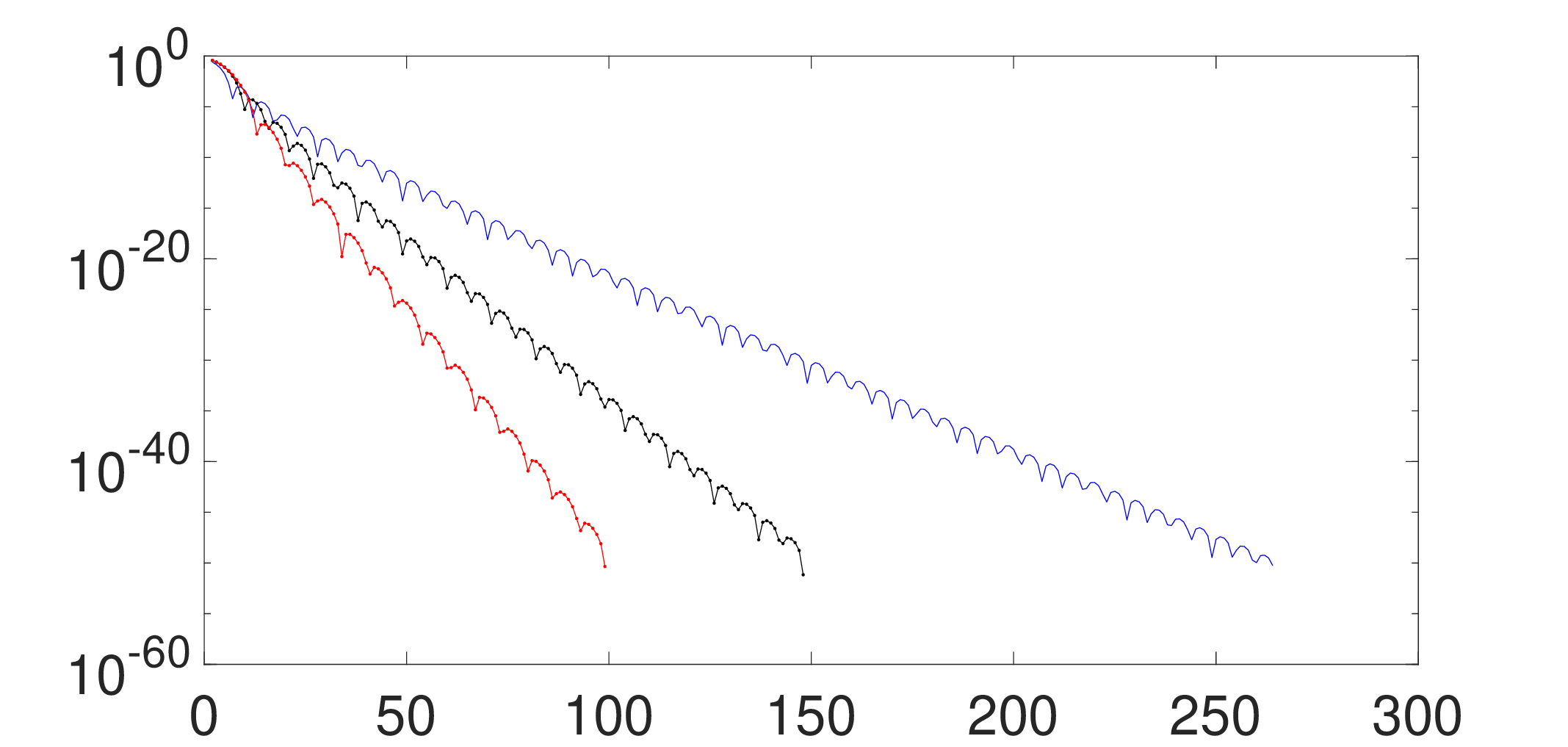}}\hfill%
  \subcaptionbox{$(\b,s)=(0.66,0.107)$\label{fig4:c}}{\includegraphics[width=.33\linewidth]{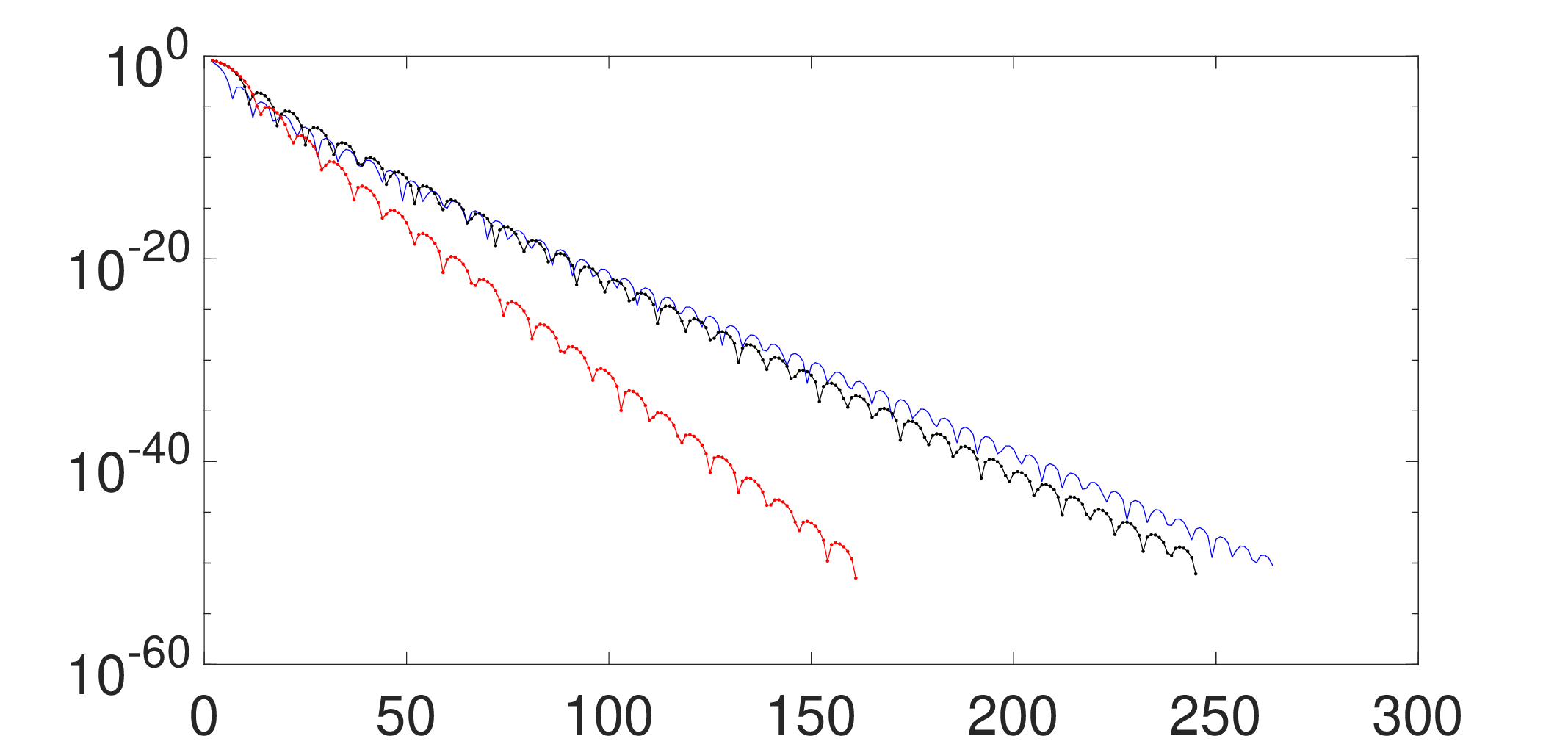}}\hfill%
  \caption{Comparing the energy error $|g(x_n)- g(x^*)|$  for  Algorithm \eqref{nest1} (blue),  Algorithm \eqref{polyak1} (black) and Algorithm \eqref{generaldiscrete} (red), in the framework of the  minimization of a non-convex function}\label{fig4}
\end{figure}

\begin{figure}[hbt!]
\subcaptionbox{$(\b,s)=(0.33,0.210)$\label{fig5:a}}{\includegraphics[width=.33\linewidth]{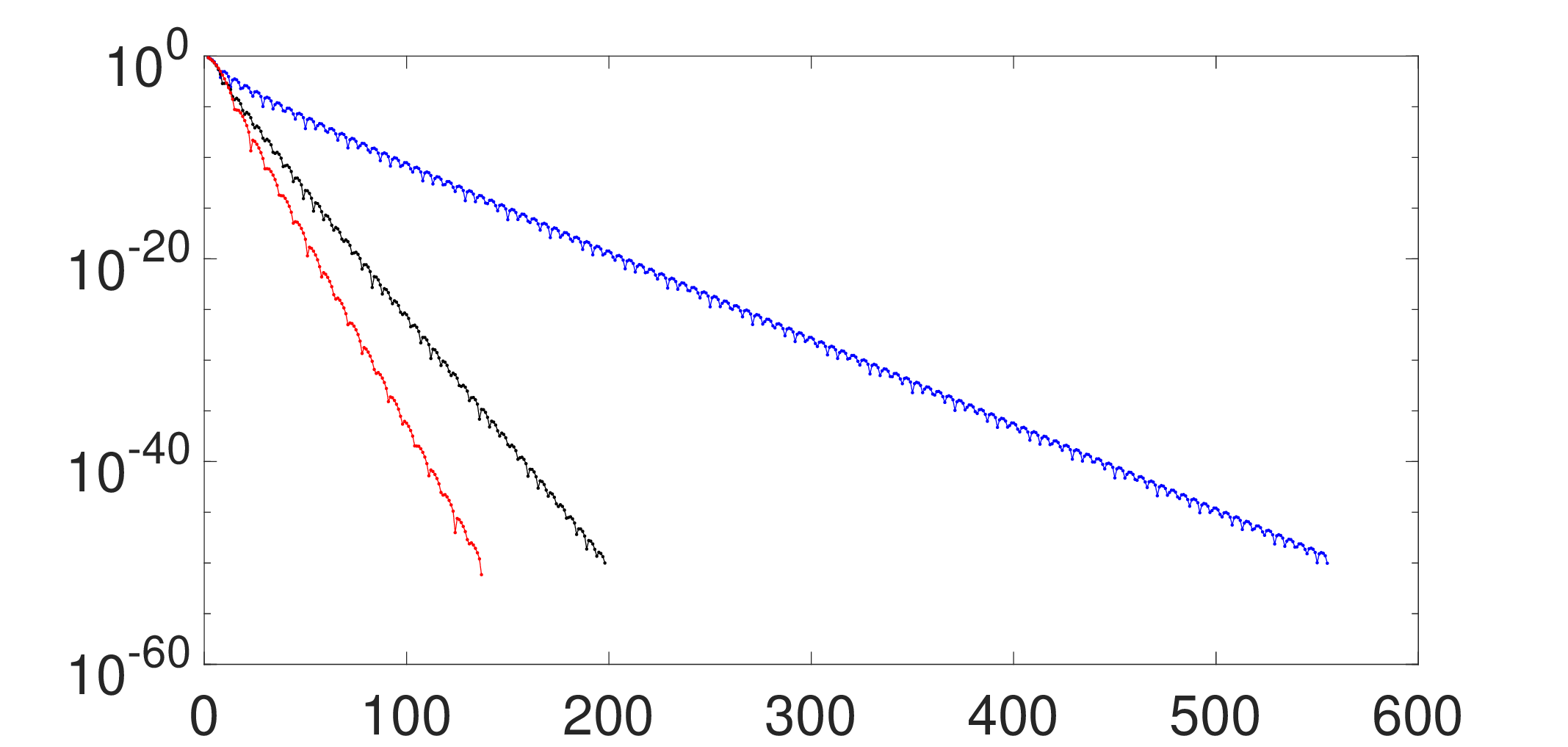}}\hfill%
  \subcaptionbox{$(\b,s)=(0.5,0.157)$\label{fig5:b}}{\includegraphics[width=.33\linewidth]{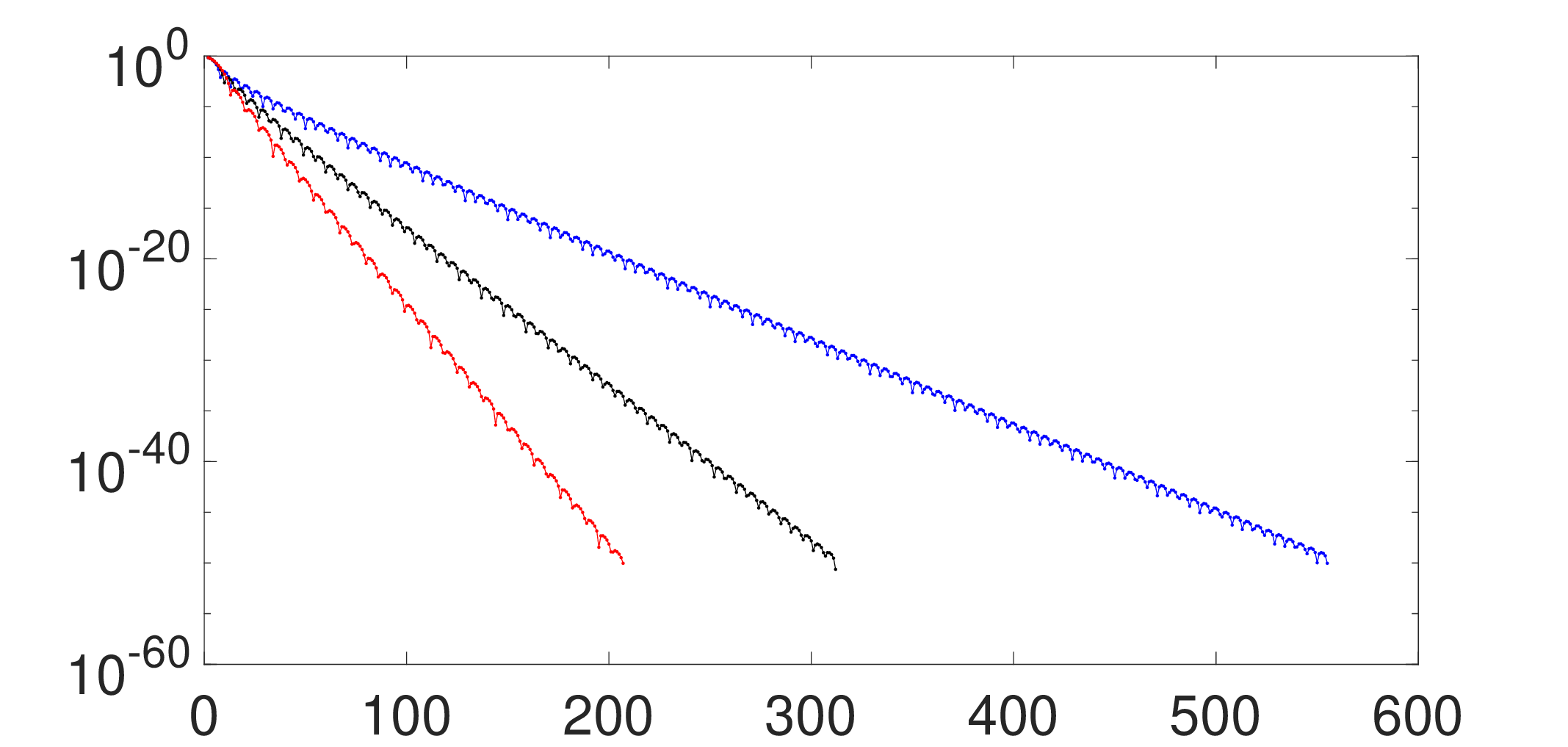}}\hfill%
  \subcaptionbox{$(\b,s)=(0.66,0.107)$\label{fig5:c}}{\includegraphics[width=.33\linewidth]{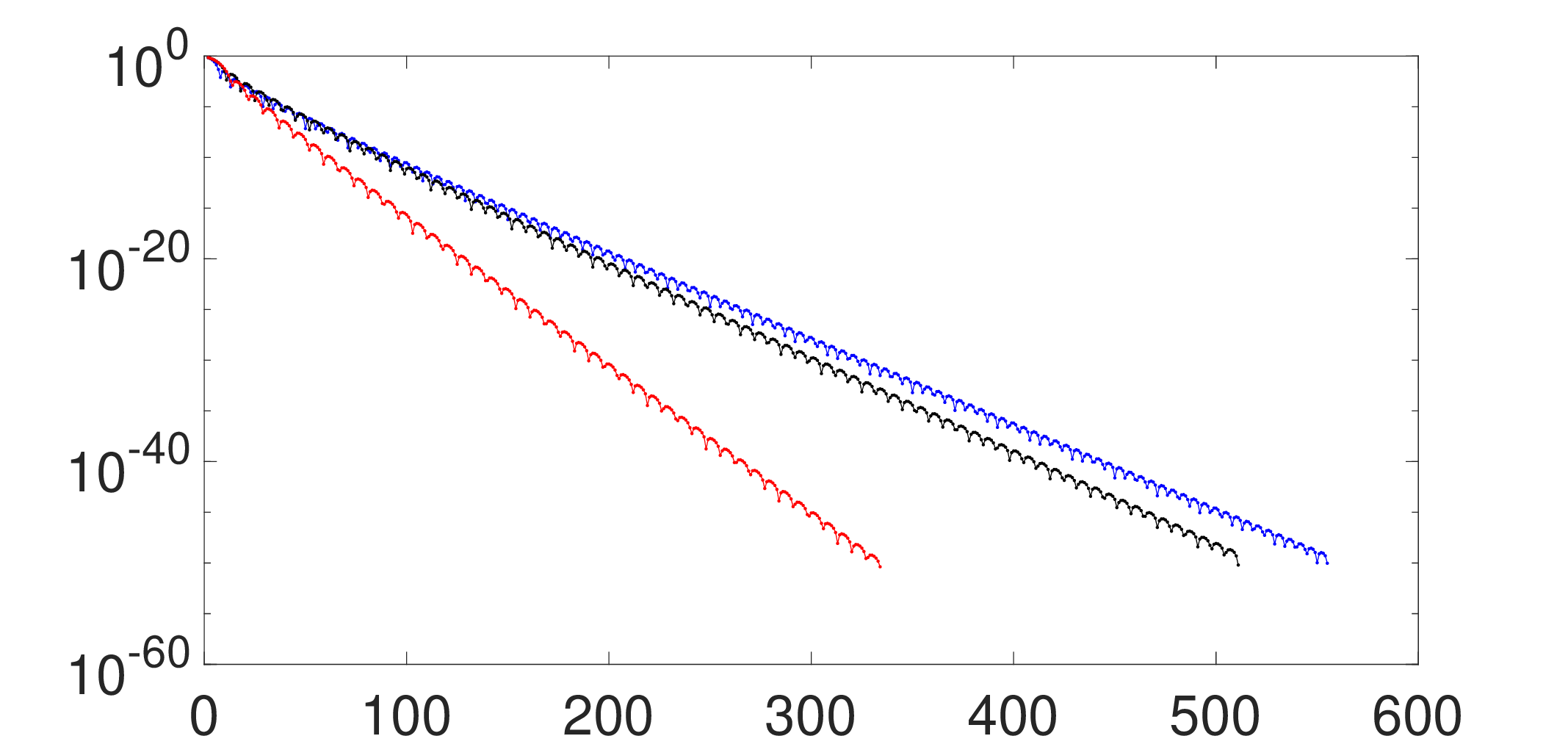}}\hfill%
\caption{Comparing the iteration error $\|x_n-x^*\|$ for  Algorithm \eqref{nest1} (blue),  Algorithm \eqref{polyak1} (black) and Algorithm \eqref{generaldiscrete} (red), in the framework of the  minimization of a non-convex function}
\label{fig5}
\end{figure}

Consequently, also for this non-convex function, Algorithm \eqref{generaldiscrete} outperforms both Algorithm \eqref{polyak1} and Algorithm \eqref{nest1}.

\section{Conclusions}

In this paper we show the convergence of a Nesterov type algorithm in a full non-convex setting by assuming that a regularization of the objective function satisfies the Kurdyka-{\L}ojasiewicz property. For this purpose we prove some abstract convergence results and we show that the sequences generated by our algorithm satisfy the conditions assumed in these abstract convergence results. More precisely, as a starting point we show a sufficient decrease property for the iterates generated by our algorithm and then via the KL property of a regularization of the objective in a cluster point of the generated sequence, we obtain the convergence of this sequence to this cluster point. Though our algorithm is asymptotically equivalent to Nesterov's accelerated gradient method, we cannot obtain full equivalence due to the fact that in order to obtain the above mentioned decrease property we cannot allow the inertial parameter, more precisely the parameter $\b$, to attain the value 1.
Nevertheless, we obtain finite, linear and sublinear  convergence rates   for the sequences generated by our numerical scheme but also for the  function values in these sequences, provided the objective function, or a regularization of the objective function, satisfies the {\L}ojasiewicz property with {\L}ojasiewicz exponent $\t\in\left[0,1\right).$

A related future research is the study of a  modified FISTA algorithm in a non-convex setting. Indeed, let $f:\R^m\To\ol\R$ be a  proper convex and lower semicontinuous function and let $g:\R^m\To\R$ be a (possible non-convex) smooth function with $L_g$ Lipschitz continuous gradient.
Consider the optimization problem
$$\inf_{x\in\R^m}f(x)+g(x).$$
We associate to this optimization problem the following proximal-gradient algorithm.
For $x_0,x_{-1}\in\R^m$ consider
\begin{equation}\label{fist}\left\{\begin{array}{lll}
\ds y_n=x_n+\frac{\b n}{n+\a}(x_n-x_{n-1}),\\
\\
\ds x_{n+1}=\prox\nolimits_{s f}(y_n-s\n g(y_n)),
\end{array}\right.
\end{equation}
where $\a>0,\,\b\in(0,1)$  and $0<s<\frac{2(1-\b)}{L_g}.$

Here
\begin{equation*}  \prox\nolimits_{s  f} : {\R^m} \rightarrow {\R^m}, \quad \prox\nolimits_{s f}(x)=\argmin_{y\in {\R^m}}\left \{f(y)+\frac{1}{2s}\|y-x\|^2\right\},
\end{equation*}
denotes the proximal point operator of the convex function $s  f$.

Obviously, when $f\equiv 0$ then \eqref{fist} becomes the numerical scheme \eqref{generaldiscrete} studied in the present paper.

We emphasize that \eqref{fist} has a  similar formulation as the modified FISTA algorithm studied by Chambolle and Dossal in \cite{ch-do2015} and the convergence of the generated sequences to a critical point of the objective function $f+g$  would open the gate for the study of FISTA type algorithms in a non-convex setting.
\vskip0.5cm
{\bf Acknowledgment}

The author is thankful  to two anonymous referees for their valuable remarks and suggestions which led to the improvement of the quality of the paper.

\appendix
\section{Appendix}
\subsection{Second order continuous dynamical systems that are modelling Algorithm \eqref{generaldiscrete}}\label{ssecA1}
In what follows we emphasize the connections between Algorithm \eqref{generaldiscrete} and the continuous dynamical systems \eqref{ee11} and \eqref{eee11}.

Consider \eqref{eee11} with the initial conditions $x(t_0)=u_0,\,\dot{x}(t_0)=v_0,\,u_0,v_0\in\R^m$ and the governing second order differential equation
$$\ddot{x}(t)+\left(\g+\frac{\a}{t}\right)\dot{x}(t)+\n g(x(t))=0,\,\g>0,\,\a\in\R.$$
We will use the time discretization presented in \cite{att-c-p-r-math-pr2018}, that is, we take the fixed stepsize $h> 0,$ and  consider $\b=1-\g h>0$,
 $t_n = \frac{1}{\b} nh$ and $x_n = x(t_n).$
 Then the implicit/explicit discretization of \eqref{ee11} leads to
 \begin{equation}\label{discr1}
 \frac{1}{h^2}(x_{n+1}-2x_n+x_{n-1})+\left(\frac{\g}{h}+\frac{\a\b}{nh^2}\right)(x_n-x_{n-1})+\n g(y_n)=0,
 \end{equation}
 where $y_n$ is a linear combination of $x_n$ and $x_{n-1}$ and will be defined below.

 Now, \eqref{discr1} can be rewritten as
$$x_{n+1}= x_n+\left(\b-\frac{\a\b}{ n}\right)(x_n-x_{n-1})-h^2\n g(y_n),$$
which suggest to choose $y_n$ in the form
$$y_n=x_n+\left(\b-\frac{\a\b}{ n}\right)(x_n-x_{n-1}).$$
However, for practical purposes,  it is convenient to work with the re-indexation $n\rightarrowtail n+\a$ and we obtain the following equivalent formulation
$$y_n=x_n+\frac{\b n}{ n+\a}(x_n-x_{n-1}).$$
Hence, by taking $h^2=s$ we get
$$x_{n+1}= x_n+\frac{\b n}{ n+\a}(x_n-x_{n-1})-s\n g(y_n),$$
which is exactly Algorithm \eqref{generaldiscrete}.

\begin{remark} Obviously, already the form $\b=1-\g h>0$ shows that $\b\in(0,1).$ We could not obtain Algorithm \eqref{generaldiscrete} via some similar  discretization of the continuous dynamical system \eqref{ee11} as the discretization method presented above. Nevertheless, we can show that \eqref{ee11} is the exact limit of Algorithm \eqref{generaldiscrete} in the sense of Su, Boyd and Cand\`es \cite{su-boyd-candes}.
\end{remark}

In what  follows we show that by choosing  appropriate values of $\b$, both the continuous second order dynamical systems \eqref{ee11} and  the continuous dynamical system \eqref{eee11} are the exact limit of  the numerical scheme \eqref{generaldiscrete}.

 To this end we take in \eqref{generaldiscrete} small step sizes and follow the same approach as Su, Boyd and Cand\`es in \cite{su-boyd-candes}, (see also \cite{BCL-AA} for similar approaches). For this purpose we rewrite \eqref{generaldiscrete} in the form
\begin{equation}\label{e411}
\frac{x_{n+1}-x_n}{\sqrt{s}}=\frac{\b n}{n+\a}\cdot\frac{x_n-x_{n-1}}{\sqrt{s}}-\sqrt{s}\n g(y_n) \ \forall n \geq 1
\end{equation}
and introduce the {\it Ansatz} $x_n\approx x(n\sqrt{s})$ for some twice continuously differentiable function $x : [0,+\infty) \rightarrow \R^n$. We let $n=\frac{t}{\sqrt{s}}$ and get  $x(t)\approx x_n,\,x(t+\sqrt{s})\approx x_{n+1},\,x(t-\sqrt{s})\approx x_{n-1}.$
Then, as the step size $s$ goes to zero, from the Taylor expansion of $x$ we obtain
$$\frac{x_{n+1}-x_n}{\sqrt{s}}=\dot{x}(t)+\frac12\ddot{x}(t)\sqrt{s}+o(\sqrt{s})$$
and
$$\frac{x_n-x_{n-1}}{\sqrt{s}}=\dot{x}(t)-\frac12\ddot{x}(t)\sqrt{s}+o(\sqrt{s}).$$

Further, since
$$\sqrt{s}\|\n g(y_n)-\n g(x_n)\|\le\sqrt{s} L_g\|y_n-x_n\|=\sqrt{s} L_g\left|\frac{\b n}{n+\a}\right|\|x_n-x_{n-1}\|
=o(\sqrt{s}),$$
it follows $\sqrt{s} \n g(y_n)=\sqrt{s}\n g(x_n)+ o(\sqrt{s})$. Consequently, \eqref{e411} can be written as
$$\dot{x}(t)+\frac12\ddot{x}(t)\sqrt{s}+ o(\sqrt{s})= $$
$$\frac{\b t}{t+\a\sqrt{s}}\left(\dot{x}(t)-\frac12\ddot{x}(t)\sqrt{s}+ o(\sqrt{s})\right)-\sqrt{s}\n g(x(t))+ o(\sqrt{s})$$
or, equivalently
$$(t+\a\sqrt{s})\left(\dot{x}(t)+\frac12\ddot{x}(t)\sqrt{s}+o(\sqrt{s})\right)=$$
$$\b t\left(\dot{x}(t)-\frac12\ddot{x}(t)\sqrt{s}+o(\sqrt{s})\right)-\sqrt{s}(t+\a\sqrt{s})\n g(x(t))+o(\sqrt{s}).$$
Hence,
\begin{equation}\label{ecom}
\frac12\left(\a\sqrt{s}+(1+\b)t\right)\ddot{x}(t)\sqrt{s}+\left((1-\b)t+\a\sqrt{s}\right)\dot{x}(t)+\sqrt{s}(t+\a\sqrt{s})\n g(x(t))=o(\sqrt{s}).
\end{equation}
Now, if we take $\b=1-\g {s}<1$ in \eqref{ecom} for some $\frac{1}{{s}}>\g>0$, we obtain
$$\frac12\left(\a \sqrt{s}+(2-\g{s})t\right)\ddot{x}(t)\sqrt{s}+\left(\g{s}t+\a\sqrt{s}\right)\dot{x}(t)+\sqrt{s}(t+\a\sqrt{s})\n g(x(t))=o(\sqrt{s}).$$

After dividing by $\sqrt{s}$ and letting $s\rightarrow 0$, we obtain
$$t\ddot{x}(t)+\a\dot{x}(t)+t\n g(x(t))=0,$$
which after division by $t$ gives \eqref{ee11}, that is,
$$\ddot{x}(t)+\frac{\a}{t}\dot{x}(t)+\n g(x(t))=0.$$

Similarly, by taking   $\b=1-\g\sqrt{s}<1$ in \eqref{ecom}, for some $\frac{1}{\sqrt{s}}>\g>0$, we obtain
$$\frac12\left(\a \sqrt{s}+(2-\g\sqrt{s})t\right)\ddot{x}(t)\sqrt{s}+\left(\g\sqrt{s}t+\a\sqrt{s}\right)\dot{x}(t)+\sqrt{s}(t+\a\sqrt{s})\n g(x(t))=o(\sqrt{s}).$$
 After dividing by $\sqrt{s}$ and letting $s\rightarrow 0$, we get
$$t\ddot{x}(t)+(\g t+\a)\dot{x}(t)+t\n g(x(t))=0,$$
which after division by $t$ gives \eqref{eee11}, that is,
$$\ddot{x}(t)+\left(\g+\frac{\a}{t}\right)\dot{x}(t)+\n g(x(t))=0.$$


\end{document}